\definecolor{labelkey}{gray}{.8}
\definecolor{refkey}{gray}{.8}
\definecolor{darkred}{rgb}{0.9,0.1,0.1}
 \newcounter{enunciato}[section]
 \newtheorem{ittheorem}{Theorem}
 \newtheorem{itlemma}{Lemma}
 \newtheorem{itproposition}{Proposition}
 \newtheorem{itcorollary}{Corollary}
 \newtheorem{itdefinition}{Definition}
 \newtheorem{itremark}{Remark}
 \newtheorem{itclaim}{Claim}
 \newtheorem{itfact}{Fact}
 \newtheorem{itconjecture}{Conjecture}
 \newenvironment{theorem}{\addtocounter{enunciato}{1}
 \begin{ittheorem}}{\end{ittheorem}}
 \newenvironment{lemma}{\addtocounter{enunciato}{1}
 \begin{itlemma}}{\end{itlemma}}
 \newenvironment{proposition}{\addtocounter{enunciato}{1}
 \begin{itproposition}}{\end{itproposition}}
 \newenvironment{corollary}{\addtocounter{enunciato}{1}
 \begin{itcorollary}}{\end{itcorollary}}
 \newenvironment{definition}{\addtocounter{enunciato}{1}
 \begin{itdefinition}}{\end{itdefinition}}
 \newenvironment{remark}{\addtocounter{enunciato}{1}
 \begin{itremark}}{\end{itremark}}
 \newenvironment{claim}{\addtocounter{enunciato}{1}
 \begin{itclaim}}{\end{itclaim}}
 \newenvironment{fact}{\addtocounter{enunciato}{1}
 \begin{itfact}}{\end{itfact}}
 \newenvironment{conjecture}{\addtocounter{enunciato}{1}
 \begin{itconjecture}}{\end{itconjecture}}
 \newcommand{\be}[1]{\begin{equation}\label{#1}}
 \newcommand{\ee}{\end{equation}}
 \newcommand{\bl}[1]{\begin{lemma}\label{#1}}
 \newcommand{\el}{\end{lemma}}
 \newcommand{\br}[1]{\begin{remark}\label{#1}}
 \newcommand{\er}{\end{remark}}
 \newcommand{\bt}[1]{\begin{theorem}\label{#1}}
 \newcommand{\et}{\end{theorem}}
 \newcommand{\bd}[1]{\begin{definition}\label{#1}}
 \newcommand{\ed}{\end{definition}}
 \newcommand{\bcl}[1]{\begin{claim}\label{#1}}
 \newcommand{\ecl}{\end{claim}}
 \newcommand{\bfact}[1]{\begin{fact}\label{#1}}
 \newcommand{\efact}{\end{fact}}
 \newcommand{\bp}[1]{\begin{proposition}\label{#1}}
 \newcommand{\ep}{\end{proposition}}
 \newcommand{\bc}[1]{\begin{corollary}\label{#1}}
 \newcommand{\ec}{\end{corollary}}
 \newcommand{\bcj}[1]{\begin{conjecture}\label{#1}}
 \newcommand{\ecj}{\end{conjecture}}
 \newcommand{\bpr}{\begin{proof}}
 \newcommand{\epr}{\end{proof}}
 \newcommand{\bprlem}[1]{\begin{proofof}{\it Lemma \ref{#1}}.\,\,}
 \newcommand{\eprlem}{\end{proofof}}
 \newcommand{\bprthm}[1]{\begin{proofof}{\it Theorem \ref{#1}}.\,\,}
 \newcommand{\eprthm}{\end{proofof}}
 \newcommand{\bprprop}[1]{\begin{proofof}{\it Proposition \ref{#1}}.\,\,}
 \newcommand{\eprprop}{\end{proofof}}
 \newcommand{\bi}{\begin{itemize}}
 \newcommand{\ei}{\end{itemize}}
 \newcommand{\ben}{\begin{enumerate}}
 \newcommand{\een}{\end{enumerate}}
 \newenvironment{proof}{\noindent {\em Proof}.\,\,}{\hspace*{\fill}$\halmos$\medskip}
 \newenvironment{proofof}{\noindent {\em Proof of\,\,}}{\hspace*{\fill}$\halmos$\medskip}
 \newcommand{\halmos}{\rule{1ex}{1.4ex}}
 \newcommand{\one}{{\mathchoice {1\mskip-4mu\mathrm l}
         {1\mskip-4mu\mathrm l}
         {1\mskip-4.5mu\mathrm l}
         {1\mskip-5mu\mathrm l}}}
\def \E {{\mathbb E}}
\def \N {{\mathbb N}}
\def \P {{\mathbb P}}
\def \R {{\mathbb R}}
\def \Z {{\mathbb Z}}
\def \lra \leftrightarrow
\def \ra {\rightarrow}
\def \ba {\begin{array}}
\def \ea {\end{array}}
\def \lra {\longrightarrow}
\def \lra {{\leftrightarrow}}
\def \subset {\subseteq}
\def\one{\rlap{\mbox{\small\rm 1}}\kern.15em 1}
\newlength{\dhatheight}
\newcommand{\doublehat}[1]{%
    \settoheight{\dhatheight}{\ensuremath{\hat{#1}}}%
    \addtolength{\dhatheight}{-0.35ex}%
    \hat{\vphantom{\rule{1pt}{\dhatheight}}%
    \smash{\hat{#1}}}}
\begin{document}
\title{Functional Central Limit Theorem for the Interface of the Multitype Contact Process}

\author{Thomas Mountford\textsuperscript{1}, Daniel Valesin\textsuperscript{2}}
\footnotetext[1]{Ecole Polytechnique F\'ed\'erale de Lausanne. \url{thomas.mountford@epfl.ch}}
\footnotetext[2]{Johann Bernoulli Institute, University of Groningen. \url{d.rodrigues.valesin@rug.nl}}
\date{September 14, 2015}
\maketitle

\begin{abstract}
We study the interface of the multitype contact process on $\Z$. In this process, each site of $\Z$ is either empty or occupied by an individual of one of two species. Each individual dies with rate 1 and attempts to give birth with rate $2 R \lambda$; the position for the possible new individual is chosen uniformly at random within distance $R$ of the parent, and the birth is suppressed if this position is already occupied. We consider the process started from the configuration in which all sites to the left of the origin are occupied by one of the species and all sites to the right of the origin by the other species, and study the evolution of the region of interface between the two species. We prove that, under diffusive scaling, the position of the interface converges to Brownian motion. 
\end{abstract}

{\bf\large{}}\bigskip


\section{Introduction}

The multitype contact process is a stochastic process that can be seen as a model for the evolution of different biological species competing for the occupation of space. It was introduced by Neuhauser in \cite{neuhauser} as a modification of Harris' (single-type) contact process (\cite{harris}). 

Let us give the definition of the multitype contact process $(\xi_t)_{t \ge 0}$ on $\Z^d$ with (at most) two types. We will need the parameters: $R_1$, $R_2 \in \mathbb{N}$ and $\delta_1$, $\delta_2$, $\lambda_1$, $\lambda_2>0$. $(\xi_t)_{t \ge 0}$ is then the Markov process with state space $\{0,1,2\}^{\Z^d}$ and generator given by $\mathcal{L} =  \mathcal{L}_1 + \mathcal{L}_2$, with
\begin{equation}\label{eq:generator}(\mathcal{L}_i f)(\xi) = \sum_{\substack{x \in \Z^d:\\\xi(x) = i}} \delta_i \cdot [f(\xi^{0\to x}) - f(\xi)] + \sum_{\substack{x \in \Z^d:\\\xi(x) = 0}}\;\;\sum_{\substack{y \in \Z^d:\\ |x-y|\leq R_i,\\\xi(y) = i}} \lambda_i \cdot [f(\xi^{i\to x}) - f(\xi)],\quad i = 1, 2,\end{equation}
where $f: \{0,1,2\}^{\Z^d} \to \mathbb{R}$ is a function that depends only on finitely many coordinates, $|\cdot|$ is the $\ell^\infty$ norm and
\begin{equation} \label{eq:i_to_x}\xi^{i \to x}(y) = \left\{ \begin{array}{ll}\xi(y)&\text{if } y \neq x;\\i&\text{if } y = x, \end{array} \right.\qquad i = 0, 1, 2.\end{equation}

 We will adopt throughout the paper the following terminology: vertices are called \textit{sites}, sites in state 0, 1 and 2 are respectively said to be \textit{empty} or to have a type 1 or type 2 \textit{occupant} (or \textit{individual}), and elements of $\{0,1,2\}^{\Z^d}$ are called \textit{configurations}. Additionally, $\delta_1, \delta_2$ are called \textit{death rates}, $R_1, R_2$ are \textit{ranges} and $\lambda_1,\lambda_2$ are \textit{birth rates} (or sometimes \textit{infection rates}).

Let us now explain the dynamics in words. Two kinds of transitions can occur. First, an individual of type $i$ dies with rate $\delta_i$, leaving its site empty. Second, given a pair of sites $x, y$ with $|x-y| \leq R$, $\xi(x) = i$ (with $i = 1$ or $2$) and $\xi(y) = 0$, the occupant of $x$ gives birth at $y$ with rate $\lambda_i$, so that a new individual of type $i$ is placed at $y$. Note that, under these rules, births only occur at empty sites, so that the state of a site can never change directly from 1 to 2 or from 2 to 1.

In case only one type (say, type 1) is present, this reduces to the contact process introduced by Harris in \cite{harris}, to be denoted here by $(\zeta_t)_{t \geq 0}$ in order to distinguish it from the multitype version. We refer the reader to \cite{lig99} for an exposition of the contact process and the statements about it that we will gather in this Introduction and in Section \ref{s:back}.

Let $(\zeta^{\{0\}}_t)_{t\geq 0}$ be the (one-type) contact process with rates $\delta_1 = \delta = 1$, $\lambda_1 = \lambda > 0$, $R_1 = R \in \N$ and the initial configuration in which only the origin is occupied. 
Denote by $\underline{0}$ the configuration in which every vertex is empty, and note that this is a trap state for the dynamics. $\;$ There exists $\lambda_c = \lambda_c(\Z, R)$ (depending on the dimension $d$ and the range $R$) such that
\begin{equation}\P\left[\text{there exists } t > 0\text{ such that }\zeta^{\{0\}}_t = \underline{0}\right] = 1 \quad \text{ if and only if } \lambda \leq \lambda_c.\label{eq:phase_transition}\end{equation}
This \textit{phase transition} is the most fundamental property of the contact process. The process is called \textit{subcritical}, \textit{critical} and \textit{supercritical} respectively in the cases $\lambda < \lambda_c$, $\lambda = \lambda_c$ and $\lambda > \lambda_c$.

In this paper, we will consider the multitype contact process $(\xi_t)$ on $\Z$ with parameters
\begin{equation} \delta_1 = \delta_2 = 1,\qquad R_1 = R_2 = R,\qquad \lambda_1 = \lambda_2 = \lambda > \lambda_c(\mathbb{Z},R).\label{eq:parameters}\end{equation}
We emphasize that the quantity $\lambda_c(\Z,R)$ that appears here is the one associated to the \textit{one-type process}, as in \eqref{eq:phase_transition}. We will be particularly interested in the `heaviside' initial configuration,
\begin{equation}\label{eq:def_heaviside}\xi^h_0(x) = \begin{cases} 1&\text{if } x \leq 0;\\2&\text{if } x > 0.\end{cases}\end{equation}
We will denote by $(\xi^h_t)_{t \geq 0}$ the process with rates \eqref{eq:parameters} and initial configuration $\xi^h_0$. We let
\begin{equation}
\label{eq:def_edges}
r_t = \sup\{x:\xi^h_t(x) = 1\},\qquad \ell_t = \inf\{x: \xi^h_t(x) = 2\},\qquad i_t = (r_t + \ell_t)/2.
\end{equation}
The interval delimited by $r_t$ and $\ell_t$ is called the \textit{interface} at time $t$, and $i_t$ is the \textit{position} of the interface at time $t$. The choice of the middle point of the interval as the position of the interface is somewhat arbitrary and will not matter for all the results obtained in this paper.

In case $R = 1$, it follows readily from inspecting the generator in \eqref{eq:generator} that $r_t < \ell_t$ for all $t$. If $R > 1$, both $r_t < \ell_t$ and $r_t > \ell_t$ are possible (in the latter case we say that we have a \textit{positive} interface, and in the previous case, a \textit{negative} interface). In \cite{valesin}, it is shown that the process $(|r_t - \ell_t|)_{t \geq 0}$, which describes the evolution of the \textit{size} of the interface, is stochastically tight:
\begin{theorem}\cite{valesin}
\label{thm:size_inter_tight} If $R \in \N$ and $\lambda > \lambda_c(\Z,R)$, then 
\begin{equation}\label{eq:interface_tightness}\text{for any } \varepsilon > 0 \text{ there exists } L > 0 \text{ such that }\P\left[|r_t - \ell_t| > L\right] < \varepsilon \text{ for all } t \geq 0.\end{equation}
\end{theorem}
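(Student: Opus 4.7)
My plan is to prove tightness of $|r_t - \ell_t|$ via a regeneration argument based on the Harris graphical construction and supercriticality of the underlying one-type process. I would begin by simultaneously realising $\xi^h$ together with the one-type contact processes $\hat\eta^1_t$ started from $\{y \le 0\}$, $\hat\eta^2_t$ started from $\{y > 0\}$, and $\zeta_t$ started from all of $\Z$, all driven by the same graphical construction. A monotonicity argument based on birth-suppression gives the inclusions $A_t := \{x : \xi^h_t(x) = 1\} \subseteq \hat\eta^1_t$ and $B_t := \{x : \xi^h_t(x) = 2\} \subseteq \hat\eta^2_t$, and also $A_t \cup B_t = \zeta_t$. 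These inclusions yield only linear-in-$t$ control on $r_t$ and $\ell_t$ through the edge speed of the one-type supercritical process, so they will serve merely as a priori bounds.

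The key ingredient is a regeneration lemma: I would show that there exist constants $T, L_0, p > 0$ such that, conditional on the history of $\xi^h$ up to time $s$, the event $\{|r_{s+T} - \ell_{s+T}| \le L_0\}$ has conditional probability at least $p$. To prove this, I would introduce a \emph{good block} event in a large space-time window centred at the current interface position $i_s$: using Bezuidenhout--Grimmett style block estimates for the supercritical contact process, one can arrange with positive probability a dense web of infection paths in the window that forces both the type-$1$ and the type-$2$ regions to repopulate the window from the sides, meeting in a narrow transition zone by time $s + T$. With the window chosen large enough, this holds regardless of the state of the interface at time $s$.

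Tightness would then follow from the strong Markov property and a geometric renewal argument: at the times $s = nT$ we have independent trials each with success probability at least $p$ of resetting the interface width to at most $L_0$, and between successive resets the linear bound of the first step shows the interface can grow by at most $O(T)$. A geometric tail on the number of failed trials therefore gives an exponentially decaying bound on $|r_t - \ell_t|$ uniformly in $t$.

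The main obstacle is proving the regeneration lemma when the interface at time $s$ is already very wide: a naive good-block construction requires the window to exceed the current interface width, but then the block probability can degrade. I would handle this via a coarse-graining in several shrinking stages, using the crude linear bound to ensure that over the relevant time scale a wide interface cannot escape a sufficiently large window inside which a good block can still be forced, so that an iterated application of the regeneration step drives the interface back to bounded size with positive probability.
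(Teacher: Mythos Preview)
This theorem is not proved in the present paper; it is quoted from \cite{valesin}, where the argument runs through the time-dual ancestor process and its comparison with coalescing random walks (cf.\ the discussion around \eqref{eq:pairs_meet}--\eqref{eq:density_goes_to_zero}), not through any forward-time renewal construction.

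Your key regeneration lemma --- fixed $T,L_0,p>0$ with $\P\bigl[\,|r_{s+T}-\ell_{s+T}|\le L_0 \mid \mathcal F_s\,\bigr]\ge p$ almost surely --- is false as stated. If at time $s$ the interface is a large negative gap, $\ell_s-r_s=W$ (and such configurations are reached from $\xi^h_0$ with positive probability for arbitrarily large $W$), then every site in $(r_s,\ell_s)$ is empty, and any type-$1$ occupant at time $s+T$ must descend via an infection path from some type-$1$ occupant at time $s$, all of which lie at positions $\le r_s$. The finite propagation speed of the graphical construction (cf.\ \eqref{eq:behM}) then gives $\P\bigl[r_{s+T}>r_s+M \mid \mathcal F_s\bigr]\to 0$ as $M\to\infty$ for fixed $T$, and symmetrically for $\ell_{s+T}$; hence the conditional probability of $\{|r_{s+T}-\ell_{s+T}|\le L_0\}$ tends to zero as $W\to\infty$, so no uniform lower bound $p>0$ exists. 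You acknowledge this obstacle, but the proposed cure (``coarse-graining in several shrinking stages'') is precisely where the substance of the proof would have to live, and nothing concrete is supplied: a workable forward-time argument would require a genuine Lyapunov drift statement showing that large interfaces contract on average, with separate handling of the gap and overlap regimes and moment control on the increments, and this does not fall out of Bezuidenhout--Grimmett block estimates alone. The duality route in \cite{valesin} sidesteps the difficulty entirely by bounding $\P[|r_t-\ell_t|>L]$ in terms of the probability that suitably placed ancestor processes fail to coalesce.
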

In the present paper, we will continue the study of the interface, but we will focus on its position rather than its size. Our main result is
\begin{theorem}
If $R \in \N$ and $\lambda > \lambda_c(\Z,R)$, then there exists $\sigma > 0$ such that 
$$\left(t^{-1/2} \cdot i_{st}\right)_{s \geq 0} \;\overset{t\to \infty}{\underset{\text{(dist.)}}\longrightarrow}\; (B_s)_{s \geq 0},$$
where $(B_s)_{s \geq 0}$ denotes Brownian motion with diffusive constant $\sigma$, and convergence holds in the space $D = D[0,\infty)$ of c\`adl\`ag trajectories with the Skorohod topology.
\end{theorem}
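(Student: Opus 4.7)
The strategy is to find a regeneration structure for the interface position, reducing the functional CLT to Donsker's invariance principle for sums of i.i.d. random variables. Concretely, I would construct a random sequence of times $0=\tau_0<\tau_1<\tau_2<\cdots$ such that the pairs $\bigl(\tau_{n+1}-\tau_n,\,i_{\tau_{n+1}}-i_{\tau_n}\bigr)$ are i.i.d.\ with mean zero (in the second coordinate), finite variance, and exponential tails, and such that $\sup_{t\in[\tau_n,\tau_{n+1}]}|i_t-i_{\tau_n}|$ is likewise well controlled. Once this is in place, Donsker's theorem applied to the random walk $(i_{\tau_n})_{n\ge 0}$ and a standard replacement argument to pass from $i_{\tau_{\lfloor s/\E\tau_1\rfloor}}$ to $i_s$ yield the stated convergence to Brownian motion, with $\sigma^2=\Var(i_{\tau_1})/\E[\tau_1]>0$.

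The regeneration times $\tau_n$ should be built from the Harris graphical representation, adapting the break-point technique that Kuczek introduced for the right edge of the supercritical one-type contact process. A time $\tau$ is declared a regeneration time if, informally, (i) the interface is tight at time $\tau$ in the sense of Theorem \ref{thm:size_inter_tight}, say $|r_\tau-\ell_\tau|\le L$; (ii) the graphical construction produces an infinite "successful" descendant path to the left of the interface which never crosses the forward space-time light cone to the right (so no information from far away to the right can influence the type~1 population of the interface after $\tau$), and symmetrically for type~2; (iii) from $\tau$ onwards the region near $i_\tau$ can be coupled with a freshly started Heaviside configuration. Thanks to the finite speed of propagation of the supercritical contact process and standard exponential decay estimates (restart/renewal arguments, block constructions for the supercritical contact process, cf.\ the methods in \cite{kuczek} and the tools developed in \cite{valesin}), such times occur at positive density and the waiting time $\tau_{n+1}-\tau_n$ has exponential moments.

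Once the regeneration construction is carried out, the remaining steps are comparatively mechanical. The increment $i_{\tau_{n+1}}-i_{\tau_n}$ has exponential tails because between regenerations the displacement is dominated by the displacement of the right edge of a one-type supercritical contact process, which is known to have exponential tails. Mean zero follows from the symmetry of the model under the reflection $x\mapsto -x$ combined with the swap $1\leftrightarrow 2$: this symmetry preserves $\xi^h_0$ in distribution and maps $i_t\mapsto -i_t$, so $i_t$ is symmetric and in particular $\E[i_{\tau_1}]=0$. Strict positivity of $\sigma^2$ requires checking that $i_{\tau_1}$ is genuinely random, which is easily done by exhibiting finite configurations of the Poisson clocks that lead to different values of $i_{\tau_1}$ with positive probability.

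The main obstacle is Step~1, the construction of regeneration times. For the one-type contact process the break-point arises from infinite ancestor paths of the right edge; here we need a joint break-point where \emph{both} the rightmost type~1 individual and the leftmost type~2 individual simultaneously have well-separated, non-interacting infinite ancestor structures, while also ensuring that the interface size returns to a fixed bound $L$. This is delicate because type~1 and type~2 individuals do interact (they block each other's births), so one cannot simply extract independent one-type regeneration structures; one must show that with positive probability, and controllably often in time, the multitype competition in a bounded window near $i_t$ resolves to a configuration indistinguishable from a shifted Heaviside restart of the dynamics. Overcoming this essentially amounts to combining the interface-size tightness of Theorem~\ref{thm:size_inter_tight} with a two-sided restart/coupling construction inside a growing cone, which is where the bulk of the technical work lies.
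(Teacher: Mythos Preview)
Your strategy is a natural one, but it is genuinely different from what the paper does, and the step you yourself flag as ``the main obstacle'' is exactly the one the paper sidesteps rather than confronts.

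The paper does \emph{not} construct regeneration times $\tau_n$ for the interface with i.i.d.\ increments. Instead it splits the statement into finite-dimensional convergence (Proposition~\ref{prop:fdd}) and tightness in $D$ (Proposition~\ref{prop:tight}), and both parts run through the time-dual of the multitype process, the \emph{ancestor process} $(\eta^x_t)$. It is the ancestor process, not the interface, that carries a clean renewal structure (Proposition~\ref{prop:rmn}) and hence a CLT for $\eta_t/\sqrt t$; duality then transfers this to a CLT for $i_t/\sqrt t$ (Proposition~\ref{prop:1dd}), so the diffusion constant $\sigma$ is the one coming from the ancestor random walk, not a ratio $\Var(i_{\tau_1})/\E[\tau_1]$. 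To pass from a one-dimensional CLT to finite-dimensional convergence, the paper proves an ``interface regeneration'' theorem (Theorem~\ref{thm:interface_regeneration}) which is a \emph{coupling} statement---restarting from a Heaviside at time $s$ centred at $i_s$ keeps the two interfaces within a fixed $K$ of each other forever, with probability $>1-\varepsilon$---rather than an exact regeneration producing i.i.d.\ blocks. Tightness is handled by separate estimates on pairs of ancestors behaving like coalescing random walks (Lemma~\ref{cla4onevstun}, borrowed from Sun's work on the Brownian web).

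Your Step~1 asks for more than the paper ever establishes: you need times at which the local configuration near $i_{\tau_n}$ lies in a fixed finite set and the post-$\tau_n$ Harris system is conditionally independent of the past, so that the increments are genuinely i.i.d. The interface-tightness theorem only controls $|r_t-\ell_t|$, not the full local picture, and the coupling of Theorem~\ref{thm:interface_regeneration} is a high-probability statement, not an almost-sure one, so it does not by itself manufacture i.i.d.\ blocks. Turning this into a Kuczek-type exact regeneration for the \emph{two-sided} interface, with both types simultaneously admitting non-interacting infinite forward structures, is plausible in spirit but is a substantial construction that you have not carried out and whose feasibility (in particular, exponential tails for $\tau_1$) is not clear from the tools cited. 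The paper's duality route trades this difficulty for softer coupling arguments plus known coalescing-random-walk estimates; the price is that finite-dimensional convergence and tightness have to be proved separately rather than falling out of Donsker's theorem in one stroke.
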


Our proof of this result follows the usual two steps: verifying convergence of finite-dimensional distributions and tightness of trajectories in $D$ (see Section 16 of \cite{bill}). We thus prove the following propositions, both applicable to the case $R \in \N$ and $\lambda > \lambda_c(\Z,R)$:
\begin{proposition}There exists $\sigma > 0$ such that, for any $0 = a_0 < a_1 < \cdots < a_k$ we have
$$t^{-1/2}\cdot \left(i_{a_1\cdot t},\;i_{a_2\cdot t}-i_{a_1 \cdot t},\ldots, i_{a_k \cdot t } - i_{a_{k-1} \cdot t}\right) \;\overset{t\to \infty}{\underset{\text{(dist.)}}\longrightarrow}\; (N_1,\ldots, N_k),$$
where $N_1,\ldots,N_k$ are independent and $N_j\sim \mathcal{N}(0,\sigma^2(a_j - a_{j-1}))$.
\label{prop:fdd} 
\end{proposition}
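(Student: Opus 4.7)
The plan is to reduce the continuous-time functional $i_t$ to a random walk with i.i.d.\ increments, to which the classical CLT applies, and then to recover finite-dimensional convergence for $(i_{a_j t})_{j \leq k}$ by the strong Markov property. Throughout I would work within Harris' graphical representation of $(\xi^h_t)$, which provides simultaneous coupling with single-type contact processes from arbitrary initial conditions and a space-time notion of ancestry for individual births.

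The central construction is a sequence of \emph{regeneration times} $0 = T_0 < T_1 < T_2 < \cdots$ and random integers $X_0 = 0, X_1, X_2, \ldots$ such that, at each $T_k$: (i) the sites $\{X_k - M, \ldots, X_k - 1\}$ are all of type $1$ and $\{X_k+1, \ldots, X_k + M\}$ are all of type $2$, for a sufficiently large fixed $M$; and (ii) the future evolution of the interface from $T_k$ onward depends only on Poisson marks in a space-time cone rooted at $[T_k, \infty) \times \{X_k - M, \ldots, X_k + M\}$. The decoupling in (ii) should be produced by a Bezuidenhout--Grimmett-type block argument: Theorem \ref{thm:size_inter_tight} provides, with positive frequency, times at which $r_t - \ell_t$ is $O(M)$, and one then superimposes a local block event that both repairs the local configuration into the Heaviside pattern of (i) and blocks potential influence from further away. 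A geometric-trial argument applied to the sequence of attempts would give $T_{k+1} - T_k$ exponential tails, while the linear propagation speed of the supercritical contact process gives $X_{k+1} - X_k$ sub-Gaussian tails.

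By construction, $\{(T_k - T_{k-1}, X_k - X_{k-1})\}_{k \geq 1}$ is i.i.d., and the type-swap symmetry of the dynamics (after a minor adjustment of the regeneration configuration to make it symmetric) yields $\E[X_k - X_{k-1}] = 0$. Donsker's invariance principle applied to the random walk $(X_k)$, combined with a random time change via $T_n/n \to \mu := \E[T_1]$, yields $t^{-1/2} X_{N(t)} \Rightarrow \sigma B$ with $\sigma^2 = \E[X_1^2]/\mu$, where $N(t) = \max\{k : T_k \leq t\}$ and $B$ is standard Brownian motion. Theorem \ref{thm:size_inter_tight} ensures $|i_t - X_{N(t)}| = O_{\P}(1)$, which is negligible on the $\sqrt{t}$ scale. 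For finite-dimensional convergence at $a_1 t < \cdots < a_k t$, one picks the first regeneration time $T_{\kappa_j} \geq a_j t$; by the exponential tails of the $T_{k+1} - T_k$, this is within $O(\log t)$ of $a_j t$, hence negligible, and the strong Markov property at each $T_{\kappa_j}$ makes the successive blocks $(T_{\kappa_{j-1}}, T_{\kappa_j}]$ genuinely independent, producing the independent Gaussian limits $N_1, \ldots, N_k$.

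The main obstacle is the construction of the regeneration times themselves. For the right edge $r_t$ of a one-type contact process, Kuczek's break-point construction succeeds because $r_t$ is measurable with respect to the graphical marks to the left of itself, so a local "fresh start" on the right is enough to regenerate. For the interface $i_t$ both sides must be regenerated simultaneously, and the local good event used for regeneration must be strong enough to dominate potential future influence from both the infinite type-$1$ sea on the left and the infinite type-$2$ sea on the right. Carrying this out, and in particular obtaining the quantitative tail bounds on $T_{k+1} - T_k$ and $X_{k+1} - X_k$ that the invariance principle requires, is where the technical heart of the argument lies and where the full strength of Theorem \ref{thm:size_inter_tight} is essential.
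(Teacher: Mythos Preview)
Your approach is genuinely different from the paper's, and the gap you yourself flag in the last paragraph is real and, as far as I can see, not repairable along the lines you sketch.

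The paper never constructs regeneration times at which the future of the interface becomes \emph{measurably} independent of the past. Instead it proves a softer statement (Theorem~\ref{thm:interface_regeneration}): if at time $s$ one replaces $\xi^h_s$ by a fresh heaviside configuration centred at $\lfloor i_s\rfloor$ and runs a new process $(i^s_t)_{t\ge s}$, then $\sup_{t\ge s}|i_t-i^s_t|$ is tight in $s$. This is a coupling bound, not a decoupling; the increments $i^{a_{j-1}t}_{a_jt}-i^{a_{j-1}t}_{a_{j-1}t}$ are then genuinely independent by construction, and the error committed by replacing $i_{a_jt}-i_{a_{j-1}t}$ by them is $o(\sqrt t)$. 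For the one-dimensional marginal $i_t/\sqrt t$, the paper does not embed a random walk in $i_t$ at all: it passes to the \emph{dual} ancestor process $\eta_t$, for which an exact i.i.d.\ renewal structure does exist (Proposition~\ref{prop:rmn}), obtains $\eta_t/\sqrt t\Rightarrow\mathcal N(0,\sigma^2)$, and transfers this to $i_t$ via the duality identity $\P[\xi^h_t(y)=1\mid\xi^h_t(y)\neq 0]=\tilde\P[\eta^y_t\le 0]$ (Lemma~\ref{lem:coinc_eta_inter}).

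The specific problem with your property~(ii) is that no \emph{local} block event can make the future of $i_t$ depend only on marks inside a cone. The interface at time $t>T_k$ is determined by which dual ancestor processes $\eta^{(x,t)}$ land to the left or right of $0$, and these ancestors can wander arbitrarily far before time $T_k$; the infinite seas of $1$'s and $2$'s on either side are always relevant. Kuczek's construction works for the rightmost $1$ in a one-type process precisely because $r_t$ is measurable with respect to marks to the left of itself, so a single-sided fresh start suffices; here both sides are infinite and neither can be screened off by a finite event. What \emph{can} be achieved---and what the paper's Section~\ref{ss:proof_reg} does achieve---is that if the configuration near $i_s$ contains long ``isolation segments'' of pure $1$'s and pure $2$'s, then \emph{with high probability} the future interface coincides with that of a heaviside restart. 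That high-probability statement is exactly enough for the decomposition in \eqref{eq:term_g1}--\eqref{eq:term_g2}, but it does not yield i.i.d.\ increments and hence does not support a direct Donsker argument.
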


\begin{proposition}\label{prop:tight}
For any $\varepsilon > 0$ there exists a compact set $K \subset D$ such that 
$$\liminf_{t \to \infty} \P\left[ (t^{-1/2}\cdot i_{st})_{s \geq 0} \in K \right] > 1-\varepsilon.$$
\end{proposition}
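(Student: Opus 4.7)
\textbf{Proof plan for Proposition \ref{prop:tight}.}

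The plan is to combine an approximate renewal structure for the interface position, which I expect to underlie the proof of Proposition \ref{prop:fdd}, with a standard tightness argument in $D[0,T]$ (recall that tightness in $D[0,\infty)$ reduces to tightness in $D[0,T]$ for each $T>0$). Specifically, I assume access to a sequence of random times $0 = \tau_0 < \tau_1 < \cdots$ such that the increments $(\tau_{n+1}-\tau_n,\, i_{\tau_{n+1}}-i_{\tau_n})$ are (at worst) stationary with exponential tails on the time component and finite variance on the space component, and such that $\E[i_{\tau_{n+1}}-i_{\tau_n}]=0$ by the symmetry $1\leftrightarrow 2$ built into \eqref{eq:parameters}. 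Writing $N(t) := \max\{n:\tau_n\le t\}$ and $S_n := i_{\tau_n}$, the renewal LLN gives $N(t)/t \to 1/\E[\tau_1]$ almost surely, and Donsker's invariance principle applied to $(S_n)$ yields tightness of $(t^{-1/2}S_{N(st)})_{s\in[0,T]}$ in $D[0,T]$.

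The next ingredient is control of the within-block fluctuation $\Delta_n := \sup_{\tau_n\le s\le\tau_{n+1}}|i_s - i_{\tau_n}|$. Theorem \ref{thm:size_inter_tight} guarantees that the interface has bounded width with high probability, while standard large-deviation estimates for the rightmost and leftmost occupied sites of the supercritical one-type contact process give uniform control on how far $r_t$ and $\ell_t$ can drift over a short time. Together these should yield tail bounds on $\Delta_n$ strong enough to conclude that $\max_{n\le N(Tt)+1}\Delta_n = o(\sqrt{t})$ in probability.

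Tightness of $(t^{-1/2}i_{st})_{s\in[0,T]}$ then follows from the trivial bound
\[
\Ll| t^{-1/2}(i_{st}-i_{s't}) \Rr| \le t^{-1/2}\Ll|S_{N(st)}-S_{N(s't)}\Rr| + 2\, t^{-1/2}\max_{n\le N(Tt)+1}\Delta_n,\qquad s,s'\in[0,T],
\]
by taking $\sup_{|s-s'|\le\delta}$ and sending $\delta\downarrow 0$: the first term is controlled by the Skorohod modulus estimate coming from Donsker's theorem, while the second tends to $0$ in probability uniformly in $s,s'$. The main obstacle is the construction of the renewal/regeneration scheme and the derivation of the tail bounds on $\Delta_n$; these will likely depend on a block construction \`a la Bezuidenhout--Grimmett and a coupling argument tailored to the two-type dynamics. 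Once these inputs are in place, the tightness argument is entirely standard.
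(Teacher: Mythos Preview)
Your plan rests on a regeneration sequence $0=\tau_0<\tau_1<\cdots$ for which the interface increments $(\tau_{n+1}-\tau_n,\,i_{\tau_{n+1}}-i_{\tau_n})$ are i.i.d.\ (or stationary) with good tails. The paper does \emph{not} construct anything of this kind, and it is not clear that a Bezuidenhout--Grimmett block argument would deliver it: that machinery produces regeneration for the one-type process, but the configuration around the interface in the two-type process is an infinite object with no obvious atom, so extracting genuine i.i.d.\ interface increments is a serious problem in its own right. The object the paper calls ``interface regeneration'' (Theorem~\ref{thm:interface_regeneration}) is something else entirely: a coupling showing that if at time $s$ you replace $\xi^h_s$ by a fresh heaviside centred at $\lfloor i_s\rfloor$, the two interfaces stay within a tight $K$ of each other forever. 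That lets you shift the starting time but gives no renewal decomposition and hence no route to Donsker.

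What the paper actually does is prove a quantitative modulus bound directly: for every $\varepsilon$ there is $U$ with
\[
\sup_{r\ge 0}\P\Big[\sup_{r\le s\le r+t}|i_s-i_r|>U\sqrt t\,\Big]<\frac{\varepsilon}{U^2}
\]
for large $t$ (Lemma~\ref{lem:inter_not_move}). The $U^{-2}$ on the right is the whole point, since in the Billingsley criterion one partitions $[0,1]$ into $\sim 1/\delta=(U/\varepsilon)^2$ pieces and takes a union bound. To get this rate the paper passes to the dual and proves (Lemma~\ref{cla4onevstun}, following Sun) a $C/t^{3/2}$ bound on the probability that an ancestor started to the left of $-U\sqrt t$ crosses to the right of $0$ while three independent ``barrier'' ancestors each stay confined to intervals of width $u\sqrt t$; iterating over three barriers turns three factors of $t^{-1/2}$ into $t^{-3/2}$, which after summing over $O(t^{5/4})$ starting points still decays. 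Theorem~\ref{thm:interface_regeneration} then transfers the $r=0$ estimate to arbitrary $r$. Your approach would be cleaner if the regeneration times existed, but as written the main obstacle you flag is not just technical --- it is the entire content of the proof, and the paper's solution goes through the dual rather than through any renewal structure on $(i_t)$ itself.
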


In proving these propositions, we will establish a result of independent interest, which we call \textit{interface regeneration}. We will explain it here only informally; the precise result depends on a few definitions and is given in Theorem \ref{thm:interface_regeneration}. Given $s > 0$, consider the configuration $\xi^h_s$ and assume the interface position $i_s = x$. Suppose we define a new configuration $\bar \xi$ by putting 1's in all sites to the left of $i_s$ and 2's to the right of $i_s$. We then show that it is possible to construct, in the same probability space as that of $(\xi_t)_{t \geq 0}$, a multitype contact process started from time $s$, $(\xi'_t)_{t \geq s}$, such that $\xi'_s  = \bar \xi$ and moreover, \textit{the interface positions for $(\xi_t)$ and for $(\xi'_t)$ are never too far from each other}. Since the evolution of the interface of $\xi'$ has the same distribution as that of the original process (except for a space-time shift), this regeneration allows us to argue that, if we consider large time intervals $I_1, I_2,\ldots$, then the displacement of $i_s$ in each interval follows approximately the same law.

In many of our proofs, we study the time dual of the multitype contact process. This dual, called the \textit{ancestor process}, was first considered in \cite{neuhauser} and further studied in \cite{valesin}. In these references, it was shown that the ancestor process behaves approximately as a system of coalescing random walks on $\Z$. Because of this, our proofs of Propositions \ref{prop:fdd} and \ref{prop:tight} are inspired in arguments that apply to coalescing random walks and the voter model, an interacting particle system whose dual is (exactly) equal to coalescing random walks. In particular, a key estimate for the proof of Proposition \ref{prop:tight} (see Lemma \ref{cla4onevstun}) was inspired in an argument by Rongfeng Sun for coalescing random walks (\cite{sun}).

\section{Background on the contact process}
\label{s:back}

\subsection{Notation on sets and configurations}
Given a set $A$, we denote by $\#A$ its cardinality and by $\mathds{1}_A$ its indicator function.

We will reserve the letter $\zeta$ to denote elements of $\{0,1\}^{\Z}$, as well as the one-type contact process, and the letter $\xi$ for elements of $\{0,1,2\}^{\Z}$ and the multitype process. We denote by $\underline{0}$ the configuration in which every vertex is in state 0. We write $\{\xi = i\} = \{x \in \Z: \xi(x) = i\}$ (and similarly for $\zeta$). Given $A \subset \Z$, ``$\xi \equiv i$ on $A$'' means that $\xi(x) = i$ for all $x \in A$ (and similarly for $\zeta$).

Throughout the paper, we fix the parameters $R \in \N$ and $\lambda > \lambda_c(R, \N)$. All the processes we will consider will be defined from these two parameters.

\subsection{One-type contact process}
We will now briefly survey some background material on the (one-type) contact process. A \textit{graphical construction} or \textit{Harris system} is a family of independent Poisson processes on $[0,\infty)$,
\begin{equation} \label{eq:def_harris}H = \left((D^x)_{x\in\Z},\;(D^{x,y})_{\substack{x,y\in\Z^d,\;0 < |x-y|\leq R}}\right), \qquad \begin{array}{l}\text{each }D^x \text{ with rate } 1,\\\text{each } D^{x,y} \text{ with rate } \lambda.\end{array} \end{equation}
We view each of these processes as a random discrete subset of $[0,\infty)$. An arrival at time $t$ of the process $D^x$ is called a \textit{recovery mark} at $x$ at time $t$, and an arrival at time $t$ of the process $D^{x,y}$ is called an \textit{arrow} or \textit{transmission} from $x$ to $y$ at time $t$. This terminology is based on the usual interpretation that is given to the contact process, namely: vertices are individuals, individuals in state 1 are \textit{infected} and individuals in state 0 are \textit{healthy}. Although we will focus mostly on the multitype contact process, which we see as a model for competition rather than the spread of an infection, we will still use some infection-related terminology that comes from the study of the classical process.

We will sometimes need to consider restrictions of $H$ to time intervals, and also translations of $H$. We hence introduce the following notation, for $t > 0$ and $z \in \Z$:
\begin{equation}\begin{split}&D^x_{[0,t]} = D^x \cap [0,t],\quad D^x \circ\theta(z,t) = \{s-t: s\in D^{x-z},\;s\geq t\},\\[.2cm]
&D^{x,y}_{[0,t]} = D^{x,y} \cap [0,t],\quad D^{x,y} \circ\theta(z,t) = \{s-t: s\in D^{x-z,y-z},\;s\geq t\},\\[.2cm]
&H_{[0,t]} = \left((D^x_{[0,t]})_{x\in\Z},\;(D^{(x,y)}_{[0,t]})_{\substack{x,y\in\Z^d,\;0 < |x-y|\leq R}}\right),\\[.2cm]
&H \circ \theta(z,t) = \left((D^x\circ \theta(z,t))_{x\in\Z},\;(D^{(x,y)}\circ \theta(z,t))_{\substack{x,y\in\Z^d,\;0 < |x-y|\leq R}}\right).
\label{eq:harris_sub_interval}
\end{split}\end{equation}

Given a (deterministic or random) initial configuration $\zeta_0$ and a Harris system $H$, it is possible to construct the contact process $(\zeta_t)_{t\geq 0}$ started from $\zeta_0$ by applying the following rules to the arrivals of the Poisson processes in $H$:
\begin{align}
&\text{if }t \in D^x, \text{ then } \zeta_t = \zeta^{0\to x}_{t-}; \label{eq:rule_single_mark}\\
&\text{if }t \in D^{x,y} \text{ and } \zeta_{t-}(x) = 1, \text{ then } \zeta_t = \zeta^{1 \to y}_{t-},\label{eq:rule_single_arrow}
\end{align}
where $\zeta^{i\to x}$ is defined as in \eqref{eq:i_to_x}. That this can be done in a consistent manner, and that it yields a Markov process with the desired infinitesimal generator, is a non-trivial result which (as the other statements in this section) the reader can find in \cite{lig99}.

Given $x,y \in \mathbb{Z}$, $t'>t\geq 0$ and a Harris system $H$, an \textit{infection path} in $H$ from $(x,t)$ to $(y,t')$ is a path $\upgamma:[t,t']\to \mathbb{Z}$ such that
\begin{equation}\label{eq:def_inf_path}\begin{split}
&\upgamma(t) = x,\qquad \upgamma(t')=y,\\& s \notin D^{\upgamma(s)} \text{ for all } s\in[t,t']\text{ and } s \in D^{\upgamma(s-),\upgamma(s)} \text{ whenever } \upgamma(s-) \neq \upgamma(s)\end{split}\end{equation} 
In case there is an infection path from $(x,t)$ to $(y,t')$, we write $(x,t)\leftrightarrow (y,t')$ in $H$ (or simply $(x,t)\leftrightarrow (y,t')$ if $H$ is clear from the context). Given sets $A, B \subset \Z$, and $I_1, I_2 \subset [0,\infty)$, we write $A \times I_1 \leftrightarrow B \times I_2$ if $(x,t) \leftrightarrow (y,t')$ for some $(x, t)\in A \times I_1$ and $(y,t') \in B \times I_2$. We will also write $A \times I_1 \leftrightarrow (y,t')$ if $(x,t) \leftrightarrow (y, t')$ for some $(x,t)\in A \times I_1$, and similarly $(x,t) \leftrightarrow B \times I_2$. Finally, we convention to put $(x,t) \leftrightarrow (x,t)$.

 When $(\zeta_t)$ is constructed with the rules \eqref{eq:rule_single_mark} and \eqref{eq:rule_single_arrow}, we have
\begin{equation}\label{eq:inf_inf_path}
\zeta_t(x) = \mathds{1}_{\{\{\zeta_0 =1\} \times \{0\} \leftrightarrow (x,t)\}} \text{ for all } x \in \Z,\; t \geq 0.
\end{equation}

We will always assume that the contact process is constructed from a Harris system (this will also be the case for the multitype contact process, which, as we will explain shortly, can be constructed from the same $H$ as the one given above). Additionally, we will often consider more than one process at a given time, and implicitly assume that all the processes are built in the same probability space, using the same $H$.

Let us now list a few facts and estimates that we will need.  By a simple comparison with a Poisson process, we can show that there exist $\kappa, c > 0$ (depending on $\lambda$ and $R$) such that
\begin{equation}\label{eq:behM}
\mathbb{P}\left[(x,r) \leftrightarrow [x-\kappa t, x+\kappa t]^c\times \{r+t\}\right] \leq e^{-ct} \text{ for any } x\in \Z,\; r \geq 0,\; t > 0.
\end{equation}
Given $A \subset \Z$ and $t \geq 0$, define 
$$T^{A \times \{t\}} = \sup\{t' \geq t: A \times \{t\} \leftrightarrow \Z \times \{t'\}\}.$$
We write $T^{(x,t)}$ instead of $T^{\{x\}\times\{t\}}$ and, in case $t = 0$, we omit it and write $T^A$ and $T^x$. By \eqref{eq:phase_transition} and the assumption that $\lambda > \lambda_c$,
$$\P[T^0 = \infty] > 0.$$
In case $T^{A\times \{t\}} = \infty$, we write $A \times \{t\} \leftrightarrow \infty$. Similarly, when $T^{(x,t)} = \infty$, we write $(x,t) \leftrightarrow \infty$.

In Theorem 2.30 in \cite{lig99}, we find that there exists $c > 0$ (depending on $R$ and $\lambda$) such that
\begin{equation} \label{eq:behTT}
\mathbb{P}\left[t< T^0 <\infty  \right] \leq e^{-ct} \text{ for any }t > 0
\end{equation}
and
\begin{equation}\label{eq:many_die}
\P\left[T^A < \infty\right] < e^{-c\#A} \text{ for any } A \subset \Z,\; A\neq \varnothing.
\end{equation}
In the mentioned theorem, these estimates are obtained for the case $R = 1$, but the method of proof is a comparison with oriented percolation that works equally well for $R > 1$.

In contrast, the following coupling result has a straightforward proof for $R = 1$ but is much harder for $R > 1$.
\begin{lemma}
\label{lem:couple_ones}
There exists $\bar \beta > 0$ such that the following holds. For any $\varepsilon > 0$ there exists $S_0 > 0$ such that, if $S \ge S_0$ and  $\zeta_0,\;\zeta_0'$ satisfy, for some $a < b$,
\begin{align*}
&\zeta_0(x) = \zeta_0'(x) \text{ for all } x \in [a,b];\\[.2cm]
&\zeta_0(x) \neq 0 \text{ for all } x \in [a-S, a) \cup (b, b+S];\\[.2cm]
&\zeta'_0(x) \neq 0 \text{ for all } x \in (-\infty, a) \cup (b, \infty)
\end{align*}
and $(\zeta_t)$, $(\zeta'_t)$ are contact processes started from $\zeta_0$ and $\zeta'_0$ and constructed with the same Harris system, then, with probability larger than $1-\varepsilon$, 
\begin{equation}
\zeta_t(x)=\zeta'_t(x) \text{ for all } x \in [a - \bar\beta t,\;b + \bar\beta t] \text{ and } t \geq 0.
\end{equation}
\end{lemma}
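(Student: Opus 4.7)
The plan is a monotone-coupling reduction followed by a duality argument for the one-type contact process. First, the hypotheses give $\zeta_0(x) \le \zeta'_0(x)$ for every $x \in \Z$: on $[a,b]$ the two agree, on $[a-S,a)\cup(b,b+S]$ both equal $1$, and outside $[a-S,b+S]$ we have $\zeta'_0 \equiv 1$. Because both processes are driven by the same Harris system $H$, the standard monotone coupling propagates this ordering, giving $\zeta_t \le \zeta'_t$ for all $t \ge 0$. It therefore suffices to bound $\P[\mathcal{E}]$, where
$$\mathcal{E} = \bigl\{\exists\, t \ge 0,\; y \in [a - \bar\beta t,\, b + \bar\beta t]:\; \zeta_t(y) = 0 < \zeta'_t(y) = 1\bigr\}.$$

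A discrepancy at $(y,t)$ is equivalent to the time-$0$ ancestor set $\mathcal{A}(y,t) := \{x: (x,0) \lra (y,t)\}$ meeting $\{\zeta'_0 = 1\} \setminus \{\zeta_0 = 1\} \subset (-\infty, a-S) \cup (b+S, \infty)$ while being disjoint from the ``shield'' $W := [a-S, a) \cup (b, b+S]$ on which both $\zeta_0$ and $\zeta'_0$ equal $1$. By time-reversal of $H$, the law of $\mathcal{A}(y,t)$ equals that of a single-seed supercritical contact process $\tilde\zeta_t^{\{y\}}$. The pointwise estimate I aim for is
$$\P\Bigl[\tilde\zeta_t^{\{y\}} \cap W = \emptyset,\; \tilde\zeta_t^{\{y\}} \cap \bigl((-\infty, a-S) \cup (b+S, \infty)\bigr) \ne \emptyset\Bigr] \le C e^{-cS}$$
uniformly in $t \ge 0$ and $y \in [a - \bar\beta t, b + \bar\beta t]$. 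I would pick $\bar\beta$ strictly less than the asymptotic edge speed $\alpha$ of the supercritical contact process so that, on the event of survival (of probability $1 - O(e^{-ct})$ by \eqref{eq:behTT}), the range of $\tilde\zeta_t^{\{y\}}$ at time $t$ covers $W$. The bad event then reduces to the presence of an all-zero gap of width at least $|W| \ge S$ straddling $W$ in the configuration of $\tilde\zeta_t^{\{y\}}$; conditional on survival, the law of $\tilde\zeta_t^{\{y\}}$ on a fixed window converges exponentially fast to the upper invariant measure $\bar\nu$, under which long zero gaps have probability at most $e^{-cS}$.

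The main obstacle is then to promote this pointwise estimate to a bound on $\P[\mathcal{E}]$, which is the heart of the proof. A naive union bound over all $(y,t)$ in the space-time strip diverges, because in each time slab $[n, n+1]$ there are $O(n)$ relevant integer sites; I would instead group discrepancies by the infection path in $H$ that witnesses them, so that a single ``path-crossing'' event of probability at most $Ce^{-cS}$ accounts for an entire cluster of bad $(y,t)$. Restricting $y$ to a spatial window of width $O((\kappa + \bar\beta) n)$ via \eqref{eq:behM} and discretising time into unit slabs (controlling oscillations inside each slab by a simple Poisson count of arrivals of $H$ near the shield) should then yield a total probability of order $e^{-cS}$, which is less than $\varepsilon$ for $S \ge S_0$. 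The hardest ingredient is the zero-gap bound under $\bar\nu$ together with the exponential-convergence statement: for $R = 1$ they follow from a classical oriented-percolation comparison, whereas for $R > 1$ they require a block renormalisation of Bezuidenhout--Grimmett type, in the spirit of the arguments developed in \cite{valesin} to prove Theorem \ref{thm:size_inter_tight}.
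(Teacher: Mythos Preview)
Your reduction via monotone coupling and the dual characterisation of a discrepancy are both correct, but your route is quite different from the paper's, and the step you identify as the ``main obstacle''---promoting the pointwise dual bound to a bound on $\P[\mathcal{E}]$---is a genuine gap that your sketch does not close.

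The paper does not use duality here. It cites the \emph{descendancy barrier} construction of \cite{ampv}: for a suitable $\bar\beta>0$ there is an event of positive probability, measurable with respect to $H$, on which every point of the forward cone $\mathcal{C}_x=\{(z,t):|z-x|\le\bar\beta t\}$ that is reachable from $\Z\times\{0\}$ is already reachable from $(x,0)$ alone. The crucial feature is that this is a \emph{single} event guaranteeing the coupling throughout the cone for \emph{all} $t\ge0$ simultaneously. The lemma then follows by arguing that for large $S$ one can, with probability $>1-\varepsilon$, locate a barrier site in each of the two shields $[a-S,a)$ and $(b,b+S]$; on that event any infection path from $(-\infty,a-S)\cup(b+S,\infty)$ that enters the strip $[a-\bar\beta t,\,b+\bar\beta t]$ is caught by one of the barrier cones and hence its endpoint is also reachable from a shield site where both initial configurations equal $1$, so no discrepancy arises.

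Your plan instead tries to sum the pointwise estimate $Ce^{-cS}$ over the entire space--time strip. Even granting that estimate (which, as you acknowledge, already requires a Bezuidenhout--Grimmett block argument when $R>1$), the $n$-th unit time slab contains $O(n)$ relevant sites, so $\sum_n Cn\,e^{-cS}$ diverges. The proposed ``grouping by witnessing path'' is not substantiated: tracing a bad path back to its first entry into $[a-S,a-S+R)$ still leaves an uncontrolled sum over the entry time $\tau\in[0,\infty)$, and the probability that the entry point is unreachable from the shield at time $\tau$ does \emph{not} decay in $\tau$ (under the upper invariant measure a fixed site is vacant with constant probability). Turning this into a summable bound essentially forces you to prove a forward-time barrier statement equivalent to Proposition~2.7 of \cite{ampv}; you are in effect attempting to re-derive that result by duality, which is the hard content rather than a routine step.
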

The proof follows from Proposition 2.7 in \cite{ampv} (see also the treatment of the event $\mathcal{H}_2$ in page 11 of that paper). The key idea is an event which the authors called the formation of a \textit{descendancy barrier}; this means that in a space-time set of the form $\mathcal{C}_x = \{[x-\bar \beta t,x+\bar \beta t]: t \geq 0\}$, every vertex that is reachable by an infection path from $\Z \times \{0\}$ is reachable by an infection path from $(x,0)$. For the statement of the present lemma, it would suffice to argue that, if $S$ is large, with high probability one can find $x \in [a-S,a)$ and $y \in (b, b+S]$ so that a descendancy barrier is formed from both $x$ and $y$.

\begin{remark}
The above lemma also holds, with the same proof, for $a = -\infty$ or $b = \infty$.
\end{remark}

\begin{lemma}\label{lem:desc_bar_sides} For any $\varepsilon > 0$ and $\sigma, \sigma'$ with $-\bar\beta \le \sigma < \sigma'\le \bar \beta$ there exists $S_0 >0$  such that, if $S \ge S_0$ and $\zeta_0(x) = 1$ for all $x \in [-S,S]$, then with probability larger than $1-\varepsilon$,
\begin{equation}
\text{for all $t \geq 0$, there exists } x \in [-S + \sigma t,\;S+\sigma't] \text{ such that } \zeta_t(x) = 1.
\end{equation}
\end{lemma}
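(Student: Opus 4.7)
The plan is to couple $\zeta$ with the contact process $\zeta'$ started from the full configuration $\zeta_0' \equiv 1$ and built in the same Harris system, thereby reducing the problem to a density statement for $\zeta'$. Invoke Lemma \ref{lem:couple_ones} with parameter $S_\star := S_0(\varepsilon/2)$, applied with $a = -S + S_\star,\; b = S - S_\star$. For $S \ge 2 S_\star$ all three hypotheses hold: $\zeta_0 \equiv \zeta_0' \equiv 1$ on $[a,b]$, $\zeta_0$ is nonzero on the collars $[-S, -S + S_\star) \cup (S - S_\star, S]$, and $\zeta_0' \equiv 1$ elsewhere. Hence, with probability at least $1 - \varepsilon/2$,
$$\zeta_t(x) = \zeta_t'(x) \quad \text{for every } x \in I_t := [-S + S_\star - \bar\beta t,\; S - S_\star + \bar\beta t] \text{ and every } t \ge 0.$$

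A short case analysis (according to whether $\sigma = -\bar\beta$ or $\sigma > -\bar\beta$, and likewise for $\sigma'$) shows that the intersection $J_t := W_t \cap I_t$ of the target window $W_t := [-S + \sigma t,\, S + \sigma' t]$ with $I_t$ is a nonempty interval whose length satisfies $|J_t| \ge c\,(S + t)$ for all $t \ge T_\star$, with $c > 0$ and $T_\star \ge 0$ depending only on $S_\star, \sigma, \sigma', \bar\beta$. Since $\zeta$ and $\zeta'$ agree on $J_t$, to conclude it suffices to establish, each with probability at least $1 - \varepsilon/4$, the two auxiliary statements
\begin{itemize}
\item[(A)] $\zeta_t' \cap J_t \ne \emptyset$ for all $t \ge T_\star$;
\item[(B)] $\zeta_t \cap W_t \ne \emptyset$ for all $t \in [0, T_\star]$.
\end{itemize}

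Claim (A) is a density/gap estimate for $\zeta'$. Since $\zeta_0' \equiv 1$, the law of $\zeta_t'$ stochastically dominates the upper invariant measure $\bar\nu$, under which the probability that an interval of length $L$ is devoid of infected sites decays rapidly in $L$ (polynomial decay is enough). The set-valued process $t \mapsto \zeta_t' \cap J_t$ can lose its last point only via a recovery inside $J_t$ or via the moving boundary of $J_t$ sweeping past an infected site, and both events are governed by a rate of order $|J_t|$. Discretizing $t$ along a unit grid, combining the per-slice gap estimate with the intermediate-interval rate bound, and using the linear growth $|J_t| \gtrsim S + t$, one obtains a summable union bound that is $\le \varepsilon/4$ for $S$ large. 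Claim (B) is softer: on the bounded interval $[0, T_\star]$ the window contains a central block such as $[-S/2, S/2]$ throughout (since the endpoints drift by at most $\bar\beta T_\star$ and $S$ is large), and the number of sites in this block whose independent Poisson recovery clock $D^x$ has no arrival in $[0, T_\star]$ is a sum of $\Theta(S)$ independent $\mathrm{Bernoulli}(e^{-T_\star})$ random variables. Each such site is infected throughout $[0, T_\star]$ (its state never changes) and lies in $W_t$ for every $t$ in that interval, so by an elementary concentration bound the probability that no witness exists is $e^{-c S}$. The main obstacle is the uniform-in-$t$ control in (A); once the $\bar\nu$-gap estimate is in hand, it is handled by the discretization-and-rate argument sketched above.
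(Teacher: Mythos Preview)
Your approach is correct and essentially the same as the paper's: reduce to the full-occupancy process $\zeta^{\Z}$ via Lemma~\ref{lem:couple_ones}, then show that a linearly growing window is never empty for $\zeta^{\Z}$ by combining a fixed-time gap estimate with a time-discretization. The paper streamlines the execution in two ways: the gap bound comes directly from self-duality, $\P[\zeta^{\Z}_t \equiv 0 \text{ on } I] = \P[I \times \{0\} \nleftrightarrow \Z \times \{t\}] \le \P[T^{I} < \infty] \le e^{-c|I|}$ via \eqref{eq:many_die}, so there is no need to invoke $\bar\nu$; and the passage from integer to continuous time uses the reverse of your rate heuristic (if the window is empty at some $s \in [t-1,t]$, then by a Strong Markov argument it is empty at time $t$ with probability bounded below by a constant $\delta$), which yields $\P[\exists s \in [t-1,t]:\text{empty}] \le \delta^{-1}\P[\text{empty at }t]$ directly and makes your separate short-time claim (B) unnecessary.
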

\begin{proof}
By Lemma \ref{lem:couple_ones}, it suffices to prove that, given $\varepsilon, \sigma, \sigma'$, there exists $S_0$ so that, for $S \ge S_0$,
\begin{equation*}
\P\left[\text{for all $t \ge 0$, there exists $x \in [-S + \sigma t,\;S + \sigma't]$ with } \zeta^\Z_t(x) = 1\right] > 1-\varepsilon,
\end{equation*}
where $(\zeta^\Z_t)$ denotes the one-type contact process started from full occupancy. For any $t$ we have
\begin{align*}
\P\left[\zeta^\Z_t \equiv 0 \text{ on } [-S + \sigma t,\;S + \sigma't]\right]&=\P\left[\Z\times \{0\} \nleftrightarrow (x,t) \text{ for all } x \in [-S+\sigma t,\;S + \sigma't]\right]\\[.2cm]
&=\P\left[(x,0) \nleftrightarrow \Z\times \{0\} \text{ for all } x \in [-S + \sigma t,\;S + \sigma't]\right]\\[.2cm]&\leq \P\left[T^{[-S+\sigma t,\;S+\sigma' t]} < \infty\right] \stackrel{\eqref{eq:many_die}}{<} e^{-2cS+ c(\sigma'-\sigma) t}.
\end{align*}
Also, using the Strong Markov Property it is easy to verify that, for some $\delta > 0$ that depends on $\sigma, \sigma', \lambda$ and $R$ but not on $t$,
\begin{equation*}\P\left[\zeta^\Z_t \equiv 0 \text{ on } [-S +\sigma t,\; S + \sigma't\right] > \delta \cdot \P\left[\zeta^\Z_s \equiv 0 \text{ on } [-S +\sigma s,\; S + \sigma's] \text{ for some } s \in [t-1, t]\right]\end{equation*}
for all $t \geq 1$. In conclusion,
\begin{align*}
&\P\left[\text{there exists } t > 0: \zeta^\Z_t \equiv 0 \text{ on } [-S + \sigma t,\;S+\sigma t]\right] \\&\leq \sum_{t=1}^\infty \P\left[\text{there exists } s \in [t-1, t]: \zeta^\Z_t \equiv 0 \text{ on } [-S + \sigma s,\;S + \sigma's] \right]\leq \frac{1}{\delta} \sum_{t=1}^\infty e^{-2cS + c(\sigma'-\sigma)t},
\end{align*}
which is smaller than $\varepsilon$ if $S$ is large enough.
\end{proof}

\subsection{Multitype contact process}
\label{ss:mcp}
\textbf{Graphical construction.} Due to our choice of parameters in \eqref{eq:parameters}, it is possible to construct the multitype contact process with the same graphical construction $H$ as the one we have given in  \eqref{eq:def_harris} for the single-type process. The effects of recovery marks and arrows are:
\begin{align}
&\text{if }t \in D^x, \text{ then } \xi_t = \xi^{0\to x}_{t-}; \label{eq:rule_multi_mark}\\
&\text{if }t \in D^{x,y}, \; \xi_{t-}(x) = i \text{ and } \xi_{t-}(y) = 0, \text{ then } \xi_t = \xi^{i \to y}_{t-},\qquad i=1,2,\label{eq:rule_multi_arrow}
\end{align}
where $\xi^{i\to x}$ is defined in \eqref{eq:i_to_x}. These rules lead to the correct transition rates, as prescribed in  \eqref{eq:generator} and \eqref{eq:parameters}.

It will often be convenient to construct several processes, one-type or multitype or both, in the same probability space and using a single realization of $H$. When we do so, the following will be quite useful.
\begin{claim}If $(\xi_t)_{t \geq 0}$, $(\xi'_t)_{t \geq 0}$ are constructed with the same Harris system, 
\begin{equation}\label{eq:attract_multi}
\begin{split}
\{\xi_0 = 2\} \subseteq \{\xi'_0 = 2\}\text{ and } \{\xi_0 = 1\} \supseteq \{\xi'_0 = 1\} \text{ implies}\\
\{\xi_t = 2\} \subseteq \{\xi'_t = 2\}\text{ and }\{\xi_t = 1\} \supseteq \{\xi'_t = 1\} \text{ for all $t$}.
\end{split}
\end{equation}
\end{claim}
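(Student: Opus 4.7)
This is an attractiveness statement for the graphical coupling, and the plan is to verify it by a local, event-by-event check. Define the relation $\preceq$ on $\{0,1,2\}$ by declaring $(a,a')$ \emph{admissible} when $a = 2 \Rightarrow a' = 2$ and $a' = 1 \Rightarrow a = 1$; equivalently the admissible pairs are exactly
\[
(0,0),\;(0,2),\;(1,0),\;(1,1),\;(1,2),\;(2,2),
\]
and the forbidden pairs are $(0,1),(2,0),(2,1)$. The hypothesis is equivalent to $(\xi_0(x),\xi'_0(x))$ being admissible for every $x \in \Z$, and the conclusion is that this remains so for every $x$ and every $t \geq 0$. Since both processes are built from the same Harris system $H$ as in \eqref{eq:rule_multi_mark}--\eqref{eq:rule_multi_arrow}, the state of a given site $x$ changes only at arrivals of $D^x$, $D^{x,y}$ or $D^{y,x}$ for some $y$ with $0 < |x-y| \leq R$, and such arrivals are locally finite in any bounded time window. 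The standard construction of the (multitype) contact process from $H$ (see Liggett's text cited before) then reduces the claim to checking that each individual Poisson event preserves admissibility at the unique site it can affect.

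The first case is a recovery mark $t \in D^x$. Rule \eqref{eq:rule_multi_mark} forces $\xi_t(x) = \xi'_t(x) = 0$, yielding the admissible pair $(0,0)$, regardless of the previous pair at $x$. The second case is an arrow $t \in D^{x,y}$. Only the state at $y$ may change, and only as a function of the pair $(a,a') = (\xi_{t-}(x),\xi'_{t-}(x))$ and the pair $(b,b') = (\xi_{t-}(y),\xi'_{t-}(y))$, both of which are admissible by the inductive hypothesis. One verifies by a short case analysis that the updated pair
\[
\bigl(b_{\mathrm{new}},b'_{\mathrm{new}}\bigr) = \bigl(b\,\one_{\{b\neq 0\}} + a\,\one_{\{b=0\}},\; b'\,\one_{\{b'\neq 0\}} + a'\,\one_{\{b'=0\}}\bigr)
\]
cannot be any of $(0,1)$, $(2,0)$ or $(2,1)$. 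For instance, for $(0,1)$ one would need $b_{\mathrm{new}} = 0$ (so $b = 0$ and $a = 0$) while $b'_{\mathrm{new}} = 1$ (so either $b' = 1$, contradicting admissibility of $(0,1)$ at $y$, or $b' = 0$ and $a' = 1$, contradicting admissibility of $(0,1)$ at $x$). The cases $(2,0)$ and $(2,1)$ are ruled out by the analogous inspections using only the admissibility of $(a,a')$ and $(b,b')$.

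Combining the two bullet points, admissibility at every site is preserved across every Poisson event of $H$; since the processes are determined by $H$ and are constant between consecutive events affecting any given site, admissibility persists for all $t \geq 0$ almost surely, which is exactly \eqref{eq:attract_multi}.

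\textbf{Main obstacle.} There is no substantive difficulty: the argument is a finite case check for the arrow rule. The only point requiring some care is the justification that one can propagate the pointwise relation from event to event over the whole lattice simultaneously, but this is handled by the standard graphical-representation construction cited before \eqref{eq:rule_multi_mark}, since changes at any fixed site form a locally finite set of times.
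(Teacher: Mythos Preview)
Your proof is correct and follows exactly the paper's approach: define the partial order $\xi \prec \xi' \Leftrightarrow \{\xi=2\}\subseteq\{\xi'=2\}$ and $\{\xi=1\}\supseteq\{\xi'=1\}$, and verify that the update rules \eqref{eq:rule_multi_mark}--\eqref{eq:rule_multi_arrow} preserve it. The paper's proof is a one-liner (``note that the rules preserve this order''), and you have simply written out the case check that the paper leaves to the reader.
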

\begin{proof}
Simply consider the partial order 
$$\xi \prec \xi'  \quad \Leftrightarrow \quad \{\xi = 2\} \subseteq \{\xi' = 2\}\text{ and } \{\xi = 1\} \supseteq \{\xi' = 1\}$$
and note that the rules \eqref{eq:rule_multi_mark} and \eqref{eq:rule_multi_arrow} preserve this order.
\end{proof}

We will keep using the infection paths of $H$, as defined in \eqref{eq:def_inf_path}. We can obtain
\begin{equation}\label{eq:inf_inf_multi_path}
\xi_t(x) \neq 0 \text{ if and only if } \{\xi_0 \neq 0\}\times\{0\} \leftrightarrow (x,t)
\end{equation}
from \eqref{eq:rule_multi_mark} and \eqref{eq:rule_multi_arrow} similarly to how \eqref{eq:inf_inf_path} is obtained from \eqref{eq:rule_single_mark} and \eqref{eq:rule_single_arrow}. In particular,
\begin{equation}\label{eq:colorblind}
\begin{split}\left( \mathds{1}_{\{\xi_t(x) \neq 0\}}: x \in \Z\right)_{t\ge 0} \text{ has same distribution as a one-type contact }\\\text{process started from } \left(\mathds{1}_{\{\xi_0(x) \neq 0\}}: x \in \Z\right).\end{split}
\end{equation}
This is quite convenient, but there is a drawback: in case $\xi_t(x) \neq 0$, it is not so simple to deduce its value from $\xi_0$ and the infection paths in $H$.  That is because an infection path is only successful in carrying type $i \in \{1,2\}$ if all of the path's arrows land on space-time points that are in state 0. To deal with this, we will need some extra definitions.

Given a realization of $H = ((D^x)_{x\in \Z}, (D^{x,y})_{x,y\in\Z,\;0<|x-y|\leq R})$ and $A \subset \Z$, we define 
\begin{equation}\begin{split}
H_A = ((D^x)_{x\in \Z}, (D_A^{x,y})_{x,y\in\Z,\;0<|x-y|\leq R}),\text{ where }\\
D_A^{x,y} = \{t \in D^{x,y}: A\times \{0\} \nleftrightarrow (y,t) \text{ in } H\}.
\end{split}\end{equation} 
Obviously, any infection path of $H_A$ is also an infection path of $H$. It is also readily seen that the one-type process with initial occupancy in $A$ is the same whether it is built using $H$ or $H_A$, that is,
$$\zeta^A_t(H) = \zeta^A_t(H_A),\quad t \geq 0.$$
Additionally, in $H_A$, the number of infection paths from $A \times \{0\}$ to any $(x,t)$ is either 0 or 1 (corresponding respectively to the cases $[\zeta^A_t(H_A)](x) = [\zeta^A_t(H)](x) = 0$ and $[\zeta^A_t(H_A)](x)= [\zeta^A_t(H)](x) = 1$). In the second case, the unique infection path in $H_A$ from $A\times\{0\}$ to $(x,t)$ is denoted $\upgamma^*_{A,x,t}:[0,t]\to\Z$. We can also characterize $\upgamma^*_{A,x,t}$ as the unique infection path in $H$ satisfying
\begin{equation}\begin{split}&\upgamma^*_{A,x,t}(0) \in A,\quad \upgamma^*_{A,x,t}(t) = x,\\&A\times\{0\} \nleftrightarrow (\upgamma^*_{A,x,t}(s),s-) \text{ whenever } \upgamma^*_{A,x,t}(s-) \neq \upgamma^*_{A,x,t}(s).\label{eq:good_gamma}\end{split}\end{equation}

Now fix $\xi_0 \in \{0,1,2\}^{\Z}$ and assume $(\xi_t)_{t \ge 0}$ is the multitype contact process started from $\xi_0$ and constructed with a Harris system $H$. We claim that
\begin{equation}\label{eq:claim_about_multi_constr}
\xi_t(x) = \mathds{1}_{\{\{\xi_0 \neq 0\}\times \{0\} \leftrightarrow (x,t)\}} \cdot \xi_0\big(\upgamma^*_{\{\xi_0 \neq 0\},x,t}(0) \big).
\end{equation}
Indeed, if the right-hand side is zero, then the indicator function is zero (as the other term is non-zero by construction), so $\xi_t(x) = 0$ holds by \eqref{eq:inf_inf_multi_path}. If the right-hand side of \eqref{eq:claim_about_multi_constr} is non-zero, then the definition of infection paths together with \eqref{eq:rule_multi_arrow}, \eqref{eq:inf_inf_multi_path} and \eqref{eq:good_gamma} imply that  $\xi_s(\upgamma^*_{\{\xi_0\neq 0\},x,t}(s)) = \xi_0(\upgamma^*_{\{\xi_0\neq 0\},x,t}(0))$ for every $s \in [0,t]$, so the equality also follows.


These considerations are summarized as follows:

\begin{lemma}
Let $(\xi_t)_{t \geq 0}$ be the multitype contact process started from a fixed $\xi_0 \in \{0,1,2\}^\Z$ and constructed with a Harris system. Then,
 \begin{equation}\label{eq:best_path}
\xi_t(x) = i \quad \text{ if and only if } \quad \begin{array}{c}  \text{there exists an infection path $\upgamma: [0,t]\to\Z^d$}\\\text{such that } \xi_0(\upgamma(0)) = i,\; \upgamma(t) = x  \text{ and }\\  \xi_{s-}(\upgamma(s)) = 0 \text{ whenever } \upgamma(s) \neq \upgamma(s-)\end{array},\quad i = 1,2.
\end{equation}
Moreover, there exists at most one infection path satisfying the stated properties.\end{lemma}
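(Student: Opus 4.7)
My overall plan is to recognise that the path $\upgamma$ described in the statement is exactly the canonical ``best'' infection path $\upgamma^*_{\{\xi_0 \neq 0\},x,t}$ already constructed in the excerpt, and to extract the three parts of the lemma (existence of such a $\upgamma$, sufficiency of its properties, and uniqueness) from the three identities \eqref{eq:inf_inf_multi_path}, \eqref{eq:good_gamma} and \eqref{eq:claim_about_multi_constr}. No essentially new content is needed; the work lies in stitching these pieces together and aligning their definitions with the ones used in the statement.

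For the ``only if'' direction, I would take $\upgamma := \upgamma^*_{\{\xi_0 \neq 0\},x,t}$. If $\xi_t(x) = i \in \{1,2\}$, then \eqref{eq:inf_inf_multi_path} gives $\{\xi_0 \neq 0\}\times\{0\} \leftrightarrow (x,t)$, so this path is well-defined; formula \eqref{eq:claim_about_multi_constr} then forces $\xi_0(\upgamma(0)) = i$; and at each of its jump times $s$, the characterising property \eqref{eq:good_gamma} states $\{\xi_0 \neq 0\}\times\{0\} \nleftrightarrow (\upgamma(s),s-)$, which via \eqref{eq:inf_inf_multi_path} is exactly $\xi_{s-}(\upgamma(s)) = 0$.

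For the ``if'' direction I would work with the almost-surely finite list $0 < s_1 < \cdots < s_m \le t$ of jump times of a given candidate $\upgamma$ and prove by induction on $k$ that $\xi_s(\upgamma(s)) = i$ for all $s \in [0,s_k]$. Between two consecutive jumps $\upgamma$ is constant and, by the definition of an infection path in \eqref{eq:def_inf_path}, misses every recovery mark, so rule \eqref{eq:rule_multi_mark} preserves the value $i$. At a jump time $s_k$, the inductive hypothesis gives $\xi_{s_k-}(\upgamma(s_k-)) = i$, the hypothesis on $\upgamma$ gives $\xi_{s_k-}(\upgamma(s_k)) = 0$, and the presence of the arrow $s_k \in D^{\upgamma(s_k-),\upgamma(s_k)}$ lets rule \eqref{eq:rule_multi_arrow} propagate the label $i$ across. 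Evaluating at $s=t$ yields $\xi_t(x)=i$.

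For uniqueness I would observe that any path $\upgamma$ satisfying the three bullet points automatically fulfils the defining conditions \eqref{eq:good_gamma} of $\upgamma^*_{\{\xi_0 \neq 0\},x,t}$: its start-point lies in $\{\xi_0 \neq 0\}$ because $\xi_0(\upgamma(0))=i\neq 0$, and using \eqref{eq:inf_inf_multi_path} in reverse turns each ``$\xi_{s-}(\upgamma(s))=0$'' into ``$\{\xi_0 \neq 0\}\times\{0\}\nleftrightarrow(\upgamma(s),s-)$''. Hence $\upgamma$ must coincide with $\upgamma^*_{\{\xi_0 \neq 0\},x,t}$. I do not foresee a genuine obstacle; the only step requiring care is the inductive argument in the sufficiency direction, where one must interleave \eqref{eq:rule_multi_mark}–\eqref{eq:rule_multi_arrow} with the infection-path definition \eqref{eq:def_inf_path} correctly at the jump times.
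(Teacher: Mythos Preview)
Your proposal is correct and follows essentially the same route as the paper: the lemma is stated there as a summary of the preceding discussion, and your argument simply makes explicit that the path in question is $\upgamma^*_{\{\xi_0\neq 0\},x,t}$, with the ``if'' direction and uniqueness read off from \eqref{eq:good_gamma}, \eqref{eq:inf_inf_multi_path} and \eqref{eq:claim_about_multi_constr} exactly as you describe. The inductive tracking of the label $i$ along $\upgamma$ that you spell out is precisely the content of the sentence in the paper asserting that $\xi_s(\upgamma^*_{\{\xi_0\neq 0\},x,t}(s))$ is constant in $s$.
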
\vspace{.4cm}

\noindent \textbf{Ancestry process.}
We now define an auxiliary process that is key in making the graphical construction of the multitype contact process more tractable. Again fix a Harris system $H$ and let $(x,r) \in \Z \times [0,\infty)$. Given $t > r$, by arguing similarly to how we did in the previous paragraphs, it can be shown that 
$$(x,r) \leftrightarrow \Z\times\{t\} \quad\text{ if and only if }\quad \begin{array}{c}\text{ there exists a unique infection path}\\ \uppsi:[r,t] \to \Z\text{ such that } \uppsi(r) = x \text{ and}\\ (\uppsi(s-),s)\nleftrightarrow \Z\times \{t\} \text{ whenever } \uppsi(s-) \neq \uppsi(s).\end{array}$$
In case it exists, we denote this path by $\psi^*_{x,r,t}$, or $\psi^*_{x,r,t}(H)$ when we want to make the dependence on the Harris system explicit. Note that $\psi^*_{x,r,t}$ only depends on $H \cap [r,t]$.

We claim that, for $r < t < t'$, 
\begin{equation}
\label{eq:prop_ancestry_path}
\text{if } \uppsi^*_{x,r,t}(t) = y \text{ and } (y,t) \leftrightarrow \Z \times \{t'\}, \text{ then } \uppsi^*_{x,r,t'}(s) = \uppsi^*_{y,t,t'}(s) \text{ for all } s \in [t, t'].
\end{equation}
Indeed,  defining $\uppsi: [r,t'] \to \Z$ by
$$\uppsi(s) = \left\{ \begin{array}{ll} \uppsi^*_{x,r,t}(s) & \text{if } s \in [r,t];\\[.2cm]\uppsi^*_{y,t,t'}(s) &\text{if } s \in [t, t'],\end{array}\right.$$
we have that 
\begin{itemize}
\item if $s \in [r,t]$ and $\uppsi(s-) \neq \uppsi(s)$, then by the definition of $\uppsi^*_{x,r,t}$, we have that $(\uppsi(s-),s) \nleftrightarrow \Z \times \{t\}$, so $(\uppsi(s-),s) \nleftrightarrow \Z \times \{t'\}$;
\item if $s \in [t,t']$ and $\uppsi(s-) \neq \uppsi(s)$, then by the definition of $\uppsi^*_{y,t,t'}$, we have that $(\uppsi(s-),s) \nleftrightarrow \Z \times \{t'\}$,
\end{itemize}
so that, by the uniqueness of $\uppsi^*_{x,r,t'}$, we get $\uppsi = \uppsi^*_{x,r,t'}$, so \eqref{eq:prop_ancestry_path} follows.

We define, for $x\in \Z$ and $r \geq 0$,
\begin{equation}\label{eq:def_ancestor_process}
\eta^{(x,r)}_t = \left\{\begin{array}{ll}\uppsi^*_{x,r,t}(t),&\text{if } t \in [r, T^{(x,r)});\\[.2cm]\triangle &\text{otherwise,}\end{array}\right.
\end{equation}
where $\triangle$ is interpreted as a ``cemetery'' state. The process $(\eta^{(x,r)}_t)_{t \geq r}$ is called the \textit{ancestor process} of $(x,r)$. In case $r = 0$, we write $\eta^x_t$ instead of $\eta^{(x,0)}_t$, and in case $r = x = 0$, we omit the superscript and write $\eta_t$. Naturally,
\begin{equation}\label{eq:eqdisteta}\big(\eta^{(x,r)}_{r+t} - x\big)_{t \geq 0} \stackrel{\text{(dist.)}}{=} \left(\eta_t\right)_{t\geq 0}.\end{equation}

Now \eqref{eq:prop_ancestry_path} can be rewritten as
\begin{equation}
\label{eq:prop_ancestry_sum}
\text{if }r < t < t',\;\eta^{(x,r)}_t \neq \triangle \text{ and } \eta^{(\eta^{(x,r)}_t,t)}_{t'} \neq \triangle,\text{ then }\eta^{(x,r)}_{t'} = \eta^{(\eta^{(x,r)}_t,t)}_{t'}.
\end{equation}
In particular, we get
\begin{equation}
\label{eq:prop_ancestry_inf}
\text{if }r < t < \infty,\;\eta^{(x,r)}_t \neq \triangle \text{ and } T^{(\eta^{(x,r)}_t,t)} = \infty,\text{ then }\eta^{(x,r)}_{t'} = \eta^{(\eta^{(x,r)}_t,t)}_{t'} \text{ for all } t' \geq t.
\end{equation}\vspace{.2cm}

\noindent \textbf{Joint construction of primal and dual processes.} We now explain the relationship between the multitype contact process and the ancestor process. Given a Harris system $H=((D^x),(D^{x,y}))$ and $t > 0$, we recall the notation introduced in \eqref{eq:harris_sub_interval} and define the \textit{reversed Harris system} $\hat H_{[0,t]}$ by
\begin{equation}
\begin{split}
&\hat H_{[0,t]} = \left((\hat D^x_{[0,t]})_{x \in \Z},\;(\hat D^{x,y}_{[0,t]})_{x,y\in\Z,\;0<|x-y|\le R}\right),\text{ where} \\[.2cm]
&\hat D^x_{[0,t]} = \{t - s: s \in D^x \cap [0,t]\} \text{ and }\hat D^{x,y}_{[0,t]} = \{t - s: s \in D^{y,x} \cap [0,t]\}.
\end{split}
\end{equation}
In words, $\hat H_{[0,t]}$ is the Harris system on the time interval $[0,t]$ obtained from $H \cap [0,t]$ by reversing time and reversing the direction of the arrows.

Assume we are given $\xi_0 \in \{0,1,2\}^\Z$ and construct $(\xi_t)$ started from $\xi_0$ using the Harris system $H$. Fix $t > 0$ and  assume that we use $\hat H_{[0,t]}$ to  construct the ancestor processes $$\eta^x_s = \eta^x_s(\hat H_{[0,t]}): 0 \leq s \leq t,\; x\in \Z.$$
One immediate consequence of this joint construction is that
\begin{equation}\text{if } \eta^x_t = \triangle, \text{ then } \Z \times \{0\} \nleftrightarrow (x,t) \text{ in } H, \text{ hence } \xi_t(x) = 0.\label{eq:dual_eq_dies}\end{equation}
More interestingly, by \eqref{eq:best_path} and the definition of the ancestor process, 
\begin{equation}
\text{if } \eta^x_t \neq \triangle \text{ and } \xi_0(\eta^x_t) \neq 0,\text{ then } \xi_t(x) = \xi_0(\eta^x_t).\label{eq:dual_eq_surv}
\end{equation}
Indeed, the $\hat H_{[0,t]}$-infection path $\uppsi^*_{x,0,t}$, when ran backwards and with arrows reversed, corresponds exactly to the $H$-infection path $\upgamma^*_{\Z,x,t}$.

As a consequence of these considerations, we have
\begin{claim}
If $\xi_0(x) \neq 0$ for all $x \in \Z$, then, with the convention that $\xi_0(\triangle) = 0$,
\begin{equation} \label{eq:duality_equation_multi}
\left( \xi_t(x): x \in \mathbb{Z}\right) \stackrel{(\text{dist.})}{=} \left(\xi_0(\eta^x_t): x\in\mathbb{Z}\right). \end{equation}
\end{claim}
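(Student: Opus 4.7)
The plan is to combine the pointwise identities \eqref{eq:dual_eq_dies} and \eqref{eq:dual_eq_surv} that have just been derived with the distributional symmetry of the reversed Harris system; no really new ingredient is needed, only a careful assembly of these pieces.

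First, I would record the distributional symmetry $\hat H_{[0,t]} \stackrel{(\text{dist.})}{=} H_{[0,t]}$. Each $\hat D^x_{[0,t]}$ is the image of the rate-$1$ Poisson process $D^x \cap [0,t]$ under the reflection $s \mapsto t-s$, hence is again a rate-$1$ Poisson process on $[0,t]$; similarly, the family $(\hat D^{x,y}_{[0,t]})$ is obtained from $(D^{y,x}_{[0,t]})$ by time reversal, and its joint law is unchanged because the index set $\{(x,y) : 0 < |x-y| \le R\}$ is symmetric under swapping coordinates and all arrow processes carry the same rate $\lambda$ and are independent. Independence between the recovery and arrow families is also preserved. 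This step is routine Poisson-process bookkeeping but deserves to be stated explicitly.

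Next, I would fix $t > 0$ and work in a single probability space on which the primal process $(\xi_s)_{0 \le s \le t}$ is built from $H$ and the ancestor processes $(\eta^x_s)_{0 \le s \le t}$, $x \in \Z$, are built from $\hat H_{[0,t]}$. The assumption $\xi_0(y) \neq 0$ for every $y \in \Z$ ensures that whenever $\eta^x_t \neq \triangle$ one also has $\xi_0(\eta^x_t) \neq 0$. Thus \eqref{eq:dual_eq_dies} and \eqref{eq:dual_eq_surv} cover every case, and together with the convention $\xi_0(\triangle) = 0$ they yield the almost sure pointwise identity $\xi_t(x) = \xi_0(\eta^x_t)$ for all $x \in \Z$.

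To conclude, I would transfer this to a statement in distribution. On the joint space, $(\xi_0(\eta^x_t))_{x \in \Z}$ is a measurable function of the deterministic $\xi_0$ and of $\hat H_{[0,t]}$; by the first step, replacing $\hat H_{[0,t]}$ by $H_{[0,t]}$ does not alter its law. Hence $(\xi_t(x))_{x \in \Z}$, built from $H$, and $(\xi_0(\eta^x_t))_{x \in \Z}$, built from an independent copy of $H$ restricted to $[0,t]$, have the same distribution, which is \eqref{eq:duality_equation_multi}. The only mildly delicate point is the symmetry $\hat H_{[0,t]} \stackrel{(\text{dist.})}{=} H_{[0,t]}$; after that, the claim is an immediate corollary of the dual identities already established.
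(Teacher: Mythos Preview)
Your proposal is correct and is exactly the argument the paper has in mind: the paper states the claim as an immediate ``consequence of these considerations,'' namely the pointwise identities \eqref{eq:dual_eq_dies}--\eqref{eq:dual_eq_surv} together with the (implicit) distributional equality $\hat H_{[0,t]} \stackrel{(\text{dist.})}{=} H_{[0,t]}$. You have simply made explicit the Poisson-process symmetry step and the passage from the almost-sure identity to the distributional one, which the paper leaves to the reader.
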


\subsection{Renewal times of the ancestry process}
We now recall the renewal structure from which we are able to decompose the ancestor process into pieces that are independent and identically distributed. This then allows us to find an embedded random walk in $(\eta_s)$ and argue that the whole of the trajectory of $(\eta_s)$ remains close to this embedded random walk. Most of the results of this subsection are not new (they appear in \cite{neuhauser} or \cite{valesin} or both); in an effort to balance the self-sufficiency of this paper with shortness of exposition, we will include a few key proofs and omit others.

\begin{lemma}
\label{lem:no_renewals_ab}
There exists $c > 0$ such that, for any $b > a \geq 0$, we have
\begin{equation}\label{eq:behMM}\P\left[ \eta_s \neq \triangle \text{ and } (\eta_s,s)\nleftrightarrow \infty \text{ for all } s \in [a,b] \right] < e^{-c(b-a)}.\end{equation}
\end{lemma}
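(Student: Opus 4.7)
My plan is to split the interval $[a,b]$ into many blocks of a fixed length $T>0$ and to argue that, on each block, there is a conditionally near-independent positive probability of a ``renewal'' occurring (i.e.\ of finding some $s$ with $(\eta_s,s)\leftrightarrow\infty$). Iterating this across all blocks then produces the desired geometric decay.

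First I would dispose of the easy subcase $T^{(0,0)}<\infty$. In this subcase every descendant of $(0,0)$ dies eventually, so $(\eta_s,s)\nleftrightarrow\infty$ holds automatically whenever $\eta_s\neq\triangle$; the event of the lemma intersected with $\{T^{(0,0)}<\infty\}$ therefore reduces to $\{b\le T^{(0,0)}<\infty\}$, and \eqref{eq:behTT} immediately gives the bound $e^{-cb}\le e^{-c(b-a)}$.

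In the complementary subcase $T^{(0,0)}=\infty$ write $s_k=a+kT$ for $k=0,\dots,N$ with $N=\lfloor(b-a)/T\rfloor$. Given $\mathcal F_{s_{k-1}}$ and on $\{\eta_{s_{k-1}}=y\neq\triangle\}$, the conditional probability of $\{(y,s_{k-1})\leftrightarrow\infty\}$ equals $\rho:=\P[T^0=\infty]>0$ by translation invariance (the Harris system on $[s_{k-1},\infty)$ is fresh). If these events were independent across $k$ the conclusion would follow immediately; the difficulty is that they all depend on the common tail of the Harris system. To decouple them I would replace ``$\leftrightarrow\infty$'' by a finite-range surrogate $G_k$ that is measurable with respect to $H\cap[s_{k-1},s_k]$ only---for instance, the event that the contact process started from $(y,s_{k-1})$ occupies a long-enough interval at time $s_k$ and forms a descendancy barrier inside the slab (as in Lemma \ref{lem:couple_ones}). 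Using \eqref{eq:behTT} (and \eqref{eq:behM}, \eqref{eq:many_die} for spatial control) one can arrange that $G_k$ occurs with conditional probability at least $\rho-O(e^{-cT})$ and that $G_k\Rightarrow(y,s_{k-1})\leftrightarrow\infty$ except on an event of probability $O(e^{-cT})$. Since the slabs for different $k$ are disjoint in time, the $G_k$'s are genuinely mutually independent.

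Combining the block-wise bound $\P[G_k^{\mathrm c}\mid\mathcal F_{s_{k-1}},\eta_{s_{k-1}}\neq\triangle]\le 1-\rho+O(e^{-cT})$ with the $O(Ne^{-cT})$ total error from the surrogate substitution yields a bound of the form $(1-\rho+O(e^{-cT}))^N+O(Ne^{-cT})$ for the probability in the lemma. Choosing $T$ large but fixed (depending only on $\rho$) this is of the form $e^{-c'(b-a)}$ for a new constant $c'>0$. I expect the principal technical obstacle to be the rigorous construction of the finite-range surrogate $G_k$, i.e.\ a suitably local event that both approximates the survival probability $\rho$ and implies true survival up to a negligible error. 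This is a familiar but delicate step in exponential-tail estimates for the supercritical contact process, and it is precisely what prevents the lemma from being a direct one-line consequence of \eqref{eq:behTT}.
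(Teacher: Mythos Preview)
Your treatment of the case $T^{(0,0)}<\infty$ is correct, but the block argument in the complementary case has a genuine gap. Your final bound is
\[
(1-\rho+O(e^{-cT}))^{N}+O(Ne^{-cT}),\qquad N=\lfloor (b-a)/T\rfloor,
\]
and with $T$ fixed the second term is of order $(b-a)\,e^{-cT}/T$, which grows \emph{linearly} in $b-a$; it is not bounded by $e^{-c'(b-a)}$. This is unavoidable in your set-up: a surrogate $G_k$ that is measurable with respect to $H\cap[s_{k-1},s_k]$ can never deterministically imply the tail event $(y,s_{k-1})\leftrightarrow\infty$, so the implication $G_k\Rightarrow\text{survival}$ always carries a fixed error per block, and summing $N$ such errors destroys exponential decay. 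Letting $T$ depend on $b-a$ does not help either (you then lose the geometric factor in the first term).

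The argument the paper has in mind---Neuhauser's---sidesteps this entirely, and it is why the paper can call the lemma a ``simple consequence'' of \eqref{eq:behTT}. Instead of a deterministic block decomposition, use the random times $\sigma_0=a$, $\sigma_{k+1}=T^{(\eta_{\sigma_k},\sigma_k)}$ (exactly as in the proof of Lemma~\ref{lem_x_goes_to_y_first}). On the event of the lemma, whenever $\sigma_k\le b$ one has $\eta_{\sigma_k}\neq\triangle$ and $\sigma_{k+1}<\infty$, so the sequence is well-defined until it first exceeds $b$. Since $\eta_{\sigma_k}$ is $H_{[0,\sigma_k]}$-measurable while $\sigma_{k+1}-\sigma_k$ is a function of $H\circ\theta(\eta_{\sigma_k},\sigma_k)$, the strong Markov property for the Harris system makes the increments $\sigma_{k+1}-\sigma_k$ conditionally i.i.d.\ with the law of $T^0$. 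The event of the lemma is thus contained in the event that a compound-geometric sum of copies of $(T^0\mid T^0<\infty)$ exceeds $b-a$; a Chernoff bound using the exponential tail in \eqref{eq:behTT} then gives $Ce^{-c(b-a)}$ directly, with no surrogate and no additive error term.
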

The proof is a simple consequence of \eqref{eq:behTT}; see Proposition 1, page 474, of \cite{neuhauser}.

Given $A \subset \Z$, we write
\begin{equation}\label{eq:notation_tilde_P}
\tilde \P^A\left[\;\cdot\; \right] = \P[\;\cdot\;|\;(y,0)\leftrightarrow \infty \text{ for all } y \in A].
\end{equation}
In case $A = \{x\}$, we write $\tilde \P^x$ instead of $\tilde \P^{\{x\}}$ and in case $x = 0$, we omit the superscript.
\begin{lemma}\label{lem_x_goes_to_y_first}
Let $t_0 > 0$ and
$$\uptau = \inf\{t \geq t_0: \eta_t \neq \triangle \text{ and }(\eta_t, t) \leftrightarrow \infty\}. $$
For any $y \in \Z$ and events $E, F$ on Harris systems,
\begin{equation}\label{eq:main_prop_H_uptau}\begin{split}&\P\left[\uptau < \infty,\;H_{[0,\uptau]} \in E,\;\eta_\uptau = y\text{ and } H \circ \theta(0,\uptau) \in F \right] \\[.2cm]
&=  \P\left[\uptau < \infty,\; H_{[0,\uptau]} \in E,\;\eta_\uptau = y\right]
\cdot \tilde \P^{y} \left[H \in F\right].
\end{split}\end{equation}
\end{lemma}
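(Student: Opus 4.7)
Plan: The lemma is a strong Markov-type statement: on $\{\uptau<\infty,\eta_\uptau=y\}$, the shifted Harris system $H\circ\theta(0,\uptau)$ should be independent of $H_{[0,\uptau]}$ with law $\tilde\P^y$. The appearance of $\tilde\P^y$ is natural, because the very definition of $\uptau$ forces $(y,\uptau)\leftrightarrow\infty$ in $H$, equivalently $(y,0)\leftrightarrow\infty$ in $H\circ\theta(0,\uptau)$. The main obstacle is that $\uptau$ is \emph{not} a stopping time for $(\sigma(H_{[0,t]}))_{t\ge 0}$, since the survival condition $(\eta_t,t)\leftrightarrow\infty$ involves the entire future of $H$.

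My plan is to approximate $\uptau$ by ``finite look-ahead'' stopping times. For $T>0$, set
\[
\uptau^{(T)} = \inf\{t\ge t_0:\eta_t\ne\triangle,\;(\eta_t,t)\leftrightarrow\Z\times\{t+T\}\}.
\]
I would verify that $\uptau^{(T)}\le\uptau$, that $\uptau^{(T)}$ is non-decreasing in $T$, and that $\uptau^{(T)}\uparrow\uptau$ almost surely on $\{\uptau<\infty\}$ (using that for any $r$ with $(\eta_r,r)\nleftrightarrow\infty$ the survival time $T^{(\eta_r,r)}$ is a.s.\ finite by \eqref{eq:behTT}, and that $\eta$ is piecewise constant so the set of such ``false starts'' decomposes into countably many pieces). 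Crucially, $\uptau^{(T)}+T$ is a genuine stopping time for the natural filtration, since $\{\uptau^{(T)}+T\le t\}=\{\uptau^{(T)}\le t-T\}$ is measurable in $\sigma(H_{[0,t]})$.

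At the stopping time $\uptau^{(T)}+T$ I would apply the strong Markov property of the Harris system to split $H$ into $H_{[0,\uptau^{(T)}+T]}$ and an independent copy $H\circ\theta(0,\uptau^{(T)}+T)$. Decomposing $H\circ\theta(0,\uptau^{(T)})$ as the concatenation of $H_{[\uptau^{(T)},\uptau^{(T)}+T]}$ (encoded in the past of the stopping time) and $H\circ\theta(0,\uptau^{(T)}+T)$ (the independent future), and using \eqref{eq:prop_ancestry_inf} so the ancestor after $\uptau^{(T)}$ coincides with that starting from $(y,\uptau^{(T)})$, this yields a $T$-truncated version of the identity, with $\tilde\P^y$ replaced by the law of $H$ conditioned on $(y,0)\leftrightarrow\Z\times\{T\}$. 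I would first establish this truncated identity for cylinder events $E,F$ (depending on arrivals in bounded space-time windows), then extend to general $E,F$ by a standard monotone class argument.

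The lemma then follows by letting $T\to\infty$. Since $\{(y,0)\leftrightarrow\Z\times\{T\}\}\downarrow\{(y,0)\leftrightarrow\infty\}$, and the latter has positive probability by supercriticality, the $T$-truncated conditional laws converge to $\tilde\P^y$ on cylinder events, and $\uptau^{(T)}\uparrow\uptau$ transfers this convergence to the statement for $\uptau$ itself. The main obstacle is this final limit, since the base point $\uptau^{(T)}$ of the shifted system $H\circ\theta(0,\uptau^{(T)})$ itself varies with $T$: one has to reduce $F$ to cylinders on bounded windows $[0,M]$ and argue that $H\circ\theta(0,\uptau^{(T)})|_{[0,M]}$ stabilizes as $T\to\infty$, using that arrivals of the Poisson processes in any bounded space-time window are a.s.\ finite and that $\uptau^{(T)}$ is eventually close to $\uptau$.
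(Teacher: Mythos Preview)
Your approach has a genuine gap at the step where you claim the strong Markov property at $\uptau^{(T)}+T$ ``yields a $T$-truncated version of the identity, with $\tilde\P^y$ replaced by the law of $H$ conditioned on $(y,0)\leftrightarrow\Z\times\{T\}$''. This is not true. The piece $H_{[\uptau^{(T)},\,\uptau^{(T)}+T]}$ is indeed measurable with respect to the past of the stopping time $\uptau^{(T)}+T$, but its \emph{conditional law} given $\{H_{[0,\uptau^{(T)}]}\in E,\ \eta_{\uptau^{(T)}}=y\}$ is \emph{not} the law of $H_{[0,T]}$ conditioned on $\{(y,0)\leftrightarrow\Z\times\{T\}\}$. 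The reason is that $\{\uptau^{(T)}=s\}$ also encodes the failures at every $t\in[t_0,s)$, namely that $(\eta_t,t)\nleftrightarrow\Z\times\{t+T\}$. For $t\in(s-T,s)$ this constrains the Harris system on $[s,\,t+T]\subset[s,s+T]$, i.e.\ it imposes extra conditions on $H\circ\theta(0,\uptau^{(T)})|_{[0,T]}$ beyond the survival of $(y,0)$. So the factorisation you assert simply fails at finite $T$, and there is nothing left to take a limit of.

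The paper sidesteps this overlap problem entirely by a different decomposition. It sets $\sigma_0=t_0$ and recursively $\sigma_{k+1}=T^{(\eta_{\sigma_k},\sigma_k)}$ (the extinction time of the current candidate). Each $\sigma_k$ is an honest stopping time, and one checks via \eqref{eq:prop_ancestry_sum} that for $\sigma_k<t<\sigma_{k+1}<\infty$ one has $T^{(\eta_t,t)}\le\sigma_{k+1}$; hence $\{\uptau<\infty\}=\bigcup_k\{\uptau=\sigma_k<\infty,\ \sigma_{k+1}=\infty\}$. The point is that at $\sigma_k$ the ``success'' event $\{(y,\sigma_k)\leftrightarrow\infty\}$ lies entirely in $H\circ\theta(0,\sigma_k)$, while the ``failure so far'' event $\{\sigma_k<\infty\}$ lies entirely in $H_{[0,\sigma_k]}$: there is no overlapping window. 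Strong Markov at $\sigma_k$ then gives the exact factorisation term by term, and summing over $k$ yields \eqref{eq:main_prop_H_uptau} without any limiting procedure. Incidentally, the same $\sigma_k$ structure is what one would need to prove your claim that $\uptau^{(T)}\uparrow\uptau$ (in fact $\uptau^{(T)}=\uptau$ once $T>\uptau-t_0$), so your route does not actually avoid the paper's key observation.
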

\begin{proof}
We let $\sigma_0 = t_0$ and, for $k \geq 0$, define $\sigma_{k+1}$ as follows:
$$
\sigma_{k+1} = \begin{cases} T^{\left(\eta_{\sigma_k},\sigma_k\right)}&\text{if } \sigma_k < \infty \text{ and } \eta_{\sigma_k} \neq \triangle\\[.2cm]
\sigma_k&\text{if }\sigma_k < \infty \text{ and }\eta_{\sigma_k} = \triangle;\\[.2cm]
\infty &\text{if } \sigma_k = \infty. \end{cases}
$$
$(\sigma_k)_{k=0}^\infty$ is thus an increasing sequence of stopping times with respect to the sigma-algebra of Harris systems. We note that, in case we have $\sigma_k < t < \sigma_{k+1} < \infty$, then \eqref{eq:prop_ancestry_sum} gives $\eta^{(\eta_{\sigma_k},\sigma_k)}_{t'} = \eta^{(\eta_t,t)}_{t'}$ for all $t' \in [t, T^{(\eta_t,t)})$. So we have
\begin{equation}
\text{if } \sigma_k < t < \sigma_{k+1} < \infty, \text{ then } T^{(\eta_t,t)} \leq T^{(\eta_{\sigma_k},\sigma_k)} = \sigma_{k+1}.
\end{equation}
As a consequence, we obtain
\begin{equation}\label{eq:claim_uptau}\{\uptau < \infty\} = \cup_{k=0}^\infty \{\uptau = \sigma_k < \infty,\; \sigma_{k+1} = \infty\}.\end{equation}

Using \eqref{eq:claim_uptau}, the left-hand side of \eqref{eq:main_prop_H_uptau} becomes
\begin{align*}
&\sum_{k=0}^\infty \P\left[\sigma_k < \infty;\; H_{[0,\sigma_k]} \in E;\;\eta_{\sigma_k} = y \text{ and } (y,\sigma_k) \leftrightarrow \infty;\; H\circ \theta(0,\sigma_k) \in F \right]\\
&= \tilde \P^{y}\left[ H \in F\right] \cdot \sum_{k=0}^\infty \P\left[\sigma_k < \infty;\;H_{[0,\sigma_k]} \in E;\;\eta_{\sigma_k} = y \text{ and } (y,\sigma_k)\leftrightarrow \infty \right]\\
&= \tilde \P^{y}\left[ H \in F\right] \cdot \P\left[ \uptau < \infty;\; H_{[0,\uptau]} \in E;\;\eta_\uptau = y \right].
\end{align*}

\end{proof}

Given $(z,r) \in \Z \times [0,\infty)$, on the event $(z,r)\leftrightarrow \infty$ we define the times
$$\uptau^{(z,r)}_0 = r,\qquad \uptau^{(z,r)}_k = \inf\{t \geq \uptau^{(z,r)}_{k-1} + 1: (\eta^{(z,r)}_t,t)\leftrightarrow \infty \},\; k \geq 1.$$
We write $\uptau^z_k$ instead of $\uptau^{(z,0)}_k$ and $\uptau_k$ instead of $\uptau^0_k$.
We now state three simple facts about these random times. First, it follows from \eqref{eq:prop_ancestry_inf} that 
\begin{equation}\label{eq:aid_uptau}
\text{if } \uptau_1 = t \text{ and } \eta_t = z, \text{ then } \uptau_k = \uptau^{(z,t)}_{k-1} \text{ for all } k \geq 1.
\end{equation}
Second, from \eqref{eq:behMM} it is easy to obtain
\begin{equation}\label{eq:tau_is_fin}
\P\left[ \uptau_k < \infty \text{ for all }k\mid(0,0) \leftrightarrow \infty\right] = 1.
\end{equation}
Third, by putting \eqref{eq:behM} and \eqref{eq:behMM} together, it is easy to show that
\begin{equation}\label{eq:bhT}
\tilde \P\left[\max\left(\uptau_1,\;\sup_{0\leq s \leq \uptau_1} |\eta_s|\right) > t\right] \leq e^{-ct},\qquad t > 0.
\end{equation}

Our main tool in dealing with the ancestor process is the following result.
\begin{proposition}\label{prop:rmn}
\begin{enumerate}
\item Under $\tilde \P$, 
\begin{equation} \label{eq:inc_iid}\left(\uptau_{k+1} - \uptau_k,\;(\eta_t - \eta_{\uptau_k})_{\uptau_k \leq t <\uptau_{k+1}}\right),\; k = 1, 2, \ldots \text{ are i.i.d.} \end{equation} In particular, $(\eta_{\uptau_k})_{k \geq 0}$ is a random walk on $\Z$ with increment distribution
$$\tilde\P\left[\eta_{\uptau_{k+1}} = w\;|\;\eta_{\uptau_k} =z\right] = \tilde \P\left[\eta_{\uptau_1} = w-z\right].$$ 
\item There exist $C, c > 0$ such that, for any $t \geq 0$, $r > 0$ and $x \geq 0$,
\begin{equation}
\label{eq:bondRW} \P\left[\eta_{t+ r} \neq \triangle,\; \sup_{s \in [t,t+r]}|\eta_s - \eta_t| > x\right] \leq Ce^{-cx^2/r} + Cre^{-c|x|}. 
\end{equation}
\item Under $\tilde \P$, \begin{equation}\label{eq:conv_norm} \frac{\eta_t}{\sqrt{t}} \overset{t\to \infty}{\underset{\text{(dist.)}}\longrightarrow} \mathcal{N}(0,\sigma^2) \text{ with }\sigma > 0.\end{equation}
\end{enumerate}
\end{proposition}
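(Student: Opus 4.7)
\emph{Part (i).} The plan is to apply Lemma~\ref{lem_x_goes_to_y_first} iteratively. By \eqref{eq:aid_uptau} and spatial translation invariance of $H$, the pair $(\uptau_{k+1} - \uptau_k,\; (\eta_t - \eta_{\uptau_k})_{\uptau_k \le t < \uptau_{k+1}})$ is a deterministic function of the translated Harris system $H \circ \theta(\eta_{\uptau_k}, \uptau_k)$. Applying Lemma~\ref{lem_x_goes_to_y_first} with $t_0 = \uptau_k + 1$, conditional on $\eta_{\uptau_k} = y$, shows that this shifted system is independent of $H_{[0,\uptau_k]}$ and has distribution $\tilde\P^y$; by translation invariance of $\P$ this gives the same block distribution as the first one under $\tilde\P$. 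A straightforward induction yields the i.i.d.\ property.

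\emph{Part (ii).} I would first reduce to $t=0$ using the Markov property of the ancestor process (an immediate consequence of \eqref{eq:prop_ancestry_sum} combined with translation invariance of $H$). On $\{r < T^{(0,0)} < \infty\}$ the contribution is at most $e^{-cr}$ by \eqref{eq:behTT}, absorbed into the right-hand side of \eqref{eq:bondRW}. On $\{(0,0)\leftrightarrow\infty\}$, use the renewal decomposition: let $N = \max\{k:\uptau_k\le r\}$; by \eqref{eq:bhT} and Chernoff, $\tilde\P[N > Mr] \le Ce^{-c'r}$ for suitable $M > \tilde\E[\uptau_1]$. On $\{N \le Mr\}$,
$$\sup_{s\in[0,r]}|\eta_s| \;\le\; \max_{0\le k\le Mr}|\eta_{\uptau_k}| \;+\; \max_{0\le k\le Mr}\sup_{\uptau_k\le s<\uptau_{k+1}}|\eta_s - \eta_{\uptau_k}|.$$
For the first term, the embedded walk $(\eta_{\uptau_k})$ has i.i.d.\ increments that are mean zero (by reflection symmetry $x \mapsto -x$ of $H$, which preserves $\tilde\P$) and exponentially integrable (by \eqref{eq:bhT}); a Fuk--Nagaev type inequality splits the tail into a Gaussian regime $Ce^{-cx^2/r}$ and a single large-jump regime $Cre^{-c|x|}$. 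For the second term, a union bound over $\le Mr$ blocks combined with \eqref{eq:bhT} contributes $Cre^{-c|x|}$. Summing yields \eqref{eq:bondRW}.

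\emph{Part (iii).} Set $v^2 := \tilde\E[\eta_{\uptau_1}^2]$ and $\mu := \tilde\E[\uptau_1]$, both in $(0,\infty)$ by \eqref{eq:bhT}; reflection symmetry gives $\tilde\E[\eta_{\uptau_1}] = 0$. The classical CLT applied to the i.i.d.\ increments from Part (i) gives $\eta_{\uptau_k}/\sqrt k$ converging in distribution to $\mathcal{N}(0,v^2)$, and the LLN gives $\uptau_k/k \to \mu$ a.s., so $\eta_{\uptau_k}/\sqrt{\uptau_k}$ converges to $\mathcal{N}(0, v^2/\mu)$. For general $t$, set $k(t) = \max\{k : \uptau_k \le t\}$; by Part (ii) applied to a single renewal block, $|\eta_t - \eta_{\uptau_{k(t)}}|/\sqrt t \to 0$ in probability, and Slutsky's theorem yields \eqref{eq:conv_norm} with $\sigma^2 = v^2/\mu$. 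Positivity $\sigma > 0$ reduces to $\tilde\P[\eta_{\uptau_1} \neq 0] > 0$, which I would verify by exhibiting a positive-probability Harris scenario in which the ancestor jumps from $0$ to some neighbour $y \neq 0$ shortly after time $0$ and the site $(y, \uptau_1)$ has a perpetual descendancy.

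The main obstacle is the bookkeeping in Part (ii): combining the random number $N$ of renewal blocks with the two-regime (Gaussian/single-jump) tail bound on the embedded walk while keeping the coefficients in the correct form. Part (i) is an essentially direct corollary of Lemma~\ref{lem_x_goes_to_y_first}, and Part (iii) is standard once Parts (i) and (ii) are in hand.
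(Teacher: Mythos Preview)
Your plans for Parts~(i) and~(iii) are fine and essentially match the paper. The gap is in Part~(ii).

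The reduction to $t=0$ via a ``Markov property'' does not work: $(\eta_s)_{s\ge 0}$ is \emph{not} Markov. Equation~\eqref{eq:prop_ancestry_sum} only says $\eta_{t'} = \eta^{(\eta_t,t)}_{t'}$ when \emph{both} sides are $\neq\triangle$; but on $\{\eta_{t+r}\neq\triangle\}$ it can happen that $\eta^{(\eta_t,t)}_{t+r}=\triangle$ --- the descendant line from $(\eta_t,t)$ dies, yet some \emph{other} descendant of $(0,0)$ alive at time $t$ reaches time $t+r$, and the ancestor process then tracks that other line. The future of $(\eta_s)$ after time $t$ depends on the full set $\{y:(0,0)\leftrightarrow(y,t)\}$, not just on $\eta_t$, so the law of $(\eta_s-\eta_t)_{s\in[t,t+r]}$ on $\{\eta_{t+r}\neq\triangle\}$ is not the law of $(\eta_s)_{s\in[0,r]}$ on $\{\eta_r\neq\triangle\}$.

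The paper does not reduce to $t=0$. It splits into $\{t+r<T^0<\infty\}$ and $\{T^0=\infty\}$ and, under $\tilde\P$, exploits a fact you overlook: by construction $\uptau_{k+1}-\uptau_k\ge 1$ \emph{deterministically}, so $N_{t+r}-N_t\le\lceil r\rceil$ for every $t$, with no Chernoff argument needed. This gives a bound on the number of renewal blocks in $[t,t+r]$ that is uniform in $t$, after which the argument is as you sketched (Part~(i) plus \eqref{eq:bhT} plus a standard random-walk maximal inequality). On the extinction event the paper does not bound by $e^{-cr}$ --- which is \emph{not} dominated by the right-hand side of \eqref{eq:bondRW} when $x\gg r$ --- but by $e^{-c|x|}$, via the inclusion $\{\sup_{0\le s\le T^0}|\eta_s|>x/2\}\subset\{T^0>x/(2\kappa)\}\cup\{\sup_{s\le x/(2\kappa)}|\eta_s|>x/2\}$ together with \eqref{eq:behM} and \eqref{eq:behTT}.
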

\begin{proof}
A proof of part \eqref{eq:inc_iid} can be found in \cite{neuhauser}, but we give another one here. Let $E_0, \ldots, E_k$ be measurable subsets of $\cup_{t \geq 0} D[0,t]$, the space of finite-time trajectories that are right-continuous with left limits. We evaluate
\begin{align}
\nonumber&\tilde \P\left[ \left(\eta_s - \eta_{\uptau_i}\right)_{\uptau_i < s \leq \uptau_{i+1}} \in E_i \text{ for } i = 0, \ldots, k\right]\\[.2cm]
&= \P\left[(0,0)\leftrightarrow \infty\right]^{-1} \cdot \sum_{z \in \Z} \P\left[\begin{array}{l}(0,0) \leftrightarrow \infty ,\; \left(\eta_s\right)_{0 \leq s \leq \uptau_1} \in E_0,\;\eta_{\uptau_1} = z,\\[.2cm]\left(\eta_s - \eta_{\uptau_i}\right)_{\uptau_i < s \leq \uptau_{i+1}} \in E_i \text{ for } i = 1, \ldots, k\end{array}\right]\nonumber\\[.2cm]
&\stackrel{\eqref{eq:tau_is_fin}}{=} \P\left[(0,0)\leftrightarrow \infty\right]^{-1} \cdot \sum_{z \in \Z} \P\left[\begin{array}{l}\uptau_1 < \infty ,\; \left(\eta_s\right)_{0 \leq s \leq \uptau_1} \in E_0,\;\eta_{\uptau_1} = z,\\[.2cm]\left(\eta_s - \eta_{\uptau_i}\right)_{\uptau_i < s \leq \uptau_{i+1}} \in E_i \text{ for } i = 1, \ldots, k\end{array}\right]
\label{eq:oaid_uptau}
\end{align}
Now, by \eqref{eq:prop_ancestry_inf} and \eqref{eq:aid_uptau}, 
$$\text{on }\{(0,0)\leftrightarrow  \infty,\; \eta_{\uptau_1} = z\} \text{ we have }\eta_s = \eta^{(z,\uptau_1)}_s \text{ for all } s \geq \uptau_1 \text{ and }\uptau_k = \uptau^{(z,\uptau_1)}_{k-1} \text{ for all }k \geq 1.$$ Applying these identities and Lemma \ref{lem_x_goes_to_y_first}, we obtain that \eqref{eq:oaid_uptau} is equal to
$$\begin{aligned}&\P\left[(0,0)\leftrightarrow \infty\right]^{-1} \cdot \sum_{z \in \Z} \P\left[\uptau_1 < \infty ,\; \left(\eta_s\right)_{0 \leq s \leq \uptau_1} \in E_0,\;\eta_{\uptau_1} = z\right]\\&\qquad\qquad\qquad\qquad\qquad \cdot \tilde \P^z\left[ \left(\eta^{z}_s - \eta^{z}_{\uptau_i}\right)_{\uptau_i < s \leq \uptau_{i+1}} \in E_{i+1} \text{ for } i = 0, \ldots, k- 1\right]\\[.2cm]
&=\P\left[(0,0)\leftrightarrow \infty\right]^{-1} \cdot \sum_{z \in \Z} \P\left[\uptau_1 < \infty ,\; \left(\eta_s\right)_{0 \leq s \leq \uptau_1} \in E_0,\;\eta_{\uptau_1} = z\right]\\&\qquad\qquad\qquad\qquad\qquad \cdot \tilde \P\left[ \left(\eta_s - \eta_{\uptau_i}\right)_{\uptau_i < s \leq \uptau_{i+1}} \in E_{i+1} \text{ for } i = 0, \ldots, k- 1\right]\\[.2cm]
&=\tilde\P\left[\left(\eta_s\right)_{0 \leq s \leq \uptau_1} \in E_0 \right] \cdot \tilde \P\left[ \left(\eta_s - \eta_{\uptau_i}\right)_{\uptau_i < s \leq \uptau_{i+1}} \in E_{i+1} \text{ for } i = 0, \ldots, k- 1\right].
\end{aligned}$$
We now iterate this computation to obtain \eqref{eq:inc_iid}.

For the remaining statements, we will need a definition. On the event $\{(0,0)\leftrightarrow \infty\}$, let
$$ N_t = \max\{k: \uptau_k \leq t\},\quad t \geq 0.$$
Since we have $\uptau_{k+1} - \uptau_k \geq 1$ for all $k$, we obtain
\begin{equation}\label{eq:separation}
N_{t+r} - N_t \leq \lceil r \rceil \text{ for any } t \geq 0,\;r>0.
\end{equation}

We now turn to \eqref{eq:bondRW}. The left-hand side is less than
\begin{align}
\P\left[t+r < T^0 < \infty,\;\sup_{t\leq s \leq t+r} |\eta_s - \eta_t| > x \right] + \tilde \P\left[\sup_{t\leq s\leq t+r} |\eta_s-\eta_t| > x\right].
\label{eq:12auxrw}\end{align}
The first term is less than
$$\P\left[ T^0<\infty,\; \sup_{0\leq s \leq T^0} |\eta_s| > \frac{x}{2}\right] \leq \P\left[\frac{x}{2\kappa} < T^0 < \infty\right] + \P\left[\sup_{0\leq s \leq x/(2\kappa)} |\eta_s| > \frac{x}{2} \right],$$
where $\kappa$ is as in \eqref{eq:behM}. Then, \eqref{eq:behM} and \eqref{eq:behTT} show that the sum is less than $e^{-c|x|}$ for some $c > 0$. The second term in \eqref{eq:12auxrw} is less than 
\begin{align*}&\tilde \P\left[\max_{N_t \leq k \leq N_{t + r}} |\eta_{\uptau_k} - \eta_{\uptau_{N_t}}| > \frac{x}{2} \right] + \tilde \P \left[\max_{N_t \leq k \leq N_{t+r}}\; \sup_{s\in[\uptau_{k}, \uptau_{k+1}]} |\eta_{s} - \eta_{\uptau_{k}}| > \frac{x}{2} \right]\\[.2cm]
&\stackrel{\eqref{eq:inc_iid},\eqref{eq:separation}}{\leq} \tilde  \P\left[\max_{0 \leq k \leq \lceil r \rceil} |\eta_{\uptau_k}| > \frac{x}{2} \right] + \lceil r \rceil \cdot \tilde \P\left[\sup_{s \in [0,\uptau_1]} |\eta_{s}| > \frac{x}{2} \right]\\[.2cm] &\hspace{.5cm}\stackrel{\eqref{eq:bhT}
}{\leq}  \tilde  \P\left[\max_{0 \leq k \leq \lceil r \rceil} |\eta_{\uptau_k}| > \frac{x}{2} \right] + \lceil r \rceil e^{-cx}.
\end{align*}
Now, using standard random walk estimates (see for example Proposition 2.1.2 in \cite{lawler}), we can  bound the first term above by $e^{-cx^2/\lceil r \rceil}$. This completes the proof of \eqref{eq:bondRW}.

Finally, let us prove \eqref{eq:conv_norm}. 
Denote
$$\mu = (\tilde \E \uptau_1)^{-1} \stackrel{\eqref{eq:bhT}}{<} \infty.$$
In Lemma 2.5 in \cite{valesin}, it is shown that
\begin{equation}\label{eq:behNt}
\tilde \P\left[\sup\{|\eta_s - \eta_{\uptau_{N_t}}|: \uptau_{N_t} \leq s \leq \uptau_{N_1 + 1}\} > x \right] \leq e^{-cx}, \qquad t > 0,\; x > 0.
\end{equation}
We write
\begin{equation*}
\frac{\eta_t}{\sqrt{t}} = \frac{\eta_{\lfloor t/\mu \rfloor}}{\sqrt{t}} + \frac{\eta_{N_t} - \eta_{\lfloor t/\mu\rfloor}}{\sqrt{t}} + \frac{\eta_{t} - \eta_{N_t}}{\sqrt{t}}
\end{equation*}
By \eqref{eq:inc_iid} and the Central Limit Theorem, $\frac{\eta_{\lfloor t/\mu \rfloor}}{\sqrt{t}}$ converges in distribution, as $t\to\infty$, to $\mathcal{N}(0,\sigma^2)$ with $\sigma > 0$. Using \eqref{eq:behNt}, we have that $\frac{\eta_{t} - \eta_{N_t}}{\sqrt{t}}$ converges in probability, as $t\to \infty$, to zero. Hence, \eqref{eq:conv_norm} will follow if we prove that the remaining term also satisfies
\begin{equation}\frac{\eta_{N_t} - \eta_{\lfloor t/\mu\rfloor}}{\sqrt{t}} \overset{t\to \infty}{\underset{\text{(prob.)}}\longrightarrow} 0\label{eq:conv_prob_0}\end{equation}With this aim, fix $\varepsilon > 0$. For any $\delta > 0$ we have
\begin{align}\nonumber
&\tilde \P\left[\frac{\eta_{N_t} - \eta_{\lfloor t/\mu\rfloor}}{\sqrt{t}} > \varepsilon\right]\leq \tilde \P\left[\frac{N_t}{t} - \frac{1}{\mu} > \delta \right] \\&+ \tilde \P\left[\frac{\eta_{N_t} - \eta_{\lfloor t/\mu\rfloor}}{\sqrt{t}} > \varepsilon,\; N_t \in \left[ \frac{t}{\mu}-\delta t,\; \frac{t}{\mu}\right]\right] + \tilde \P\left[\frac{\eta_{N_t} - \eta_{\lfloor t/\mu\rfloor}}{\sqrt{t}} > \varepsilon,\; N_t \in \left[ \frac{t}{\mu},\; \frac{t}{\mu} + \delta t\right]\right]\label{eq:almost_conv}
\end{align}
By the Renewal Theorem, $\tilde \P\left[\frac{N_t}{t} - \frac{1}{\mu} > \delta \right] \to 0$ as $t\to \infty$. Next, 
\begin{align*}\tilde \P\left[\frac{|\eta_{N_t} - \eta_{\lfloor t/\mu\rfloor}|}{\sqrt{t}} > \varepsilon,\; N_t \in \left[ \frac{t}{\mu},\; \frac{t}{\mu}+\delta t\right]\right] &\leq \tilde \P\left[\max_{\frac{t}{\mu} \leq i \leq \frac{t}{\mu} + \delta t} |\eta_{\uptau_i} - \eta_{\uptau_{\lfloor t/\mu \rfloor}}| > \varepsilon\sqrt{t}\right] \\[.2cm]&\leq \delta t \frac{\text{Var}(\eta_{\uptau_1})}{\varepsilon^2t} = \delta \frac{\text{Var}(\eta_{\uptau_1})}{\varepsilon^2},\end{align*}
where the last inequality is an application of Kolmogorov's Inequality. The above can be made arbitrarily small by taking $\delta$ small (depending on $\varepsilon$). The other term in \eqref{eq:almost_conv} is then treated similarly, and the proof of \eqref{eq:conv_prob_0} is now complete.
\end{proof}

In \cite{valesin}, results are obtained about the joint behavior of two or more ancestor processes. The method used to obtain such results involved studying renewal times that are more complicated then the $\uptau^x_k$ defined above. We will not present the details here. Rather, let us just mention that, while a single ancestor behaves closely to a random walk (as outlined above), a larger amount of ancestors, when considered jointly, behave closely to a system of coalescing random walks (that is, a system of random walkers that move independently with the added rule that two walkers that occupy the same position merge into a single walker). Taking advantage of this comparison, one can then obtain for ancestor processes several estimates that hold for coalescing random walks. In particular, in Lemma 3.2 in \cite{valesin}, it is shown that
\begin{equation}
\label{eq:pairs_meet}
\text{there exists } C > 0 \text{ such that }\P\left[\eta^x_t,\;\eta^y_t \neq \triangle,\; \eta^x_t \neq \eta^y_t \right] \leq \frac{C|x-y|}{\sqrt{t}},\; x,y \in \Z,\; t > 0.
\end{equation}
Using this result, it is then possible to show that the density of the set of \textit{all} ancestors at time $t$, $\{\eta^x_t: x\in \Z\} \cap \Z$, goes to zero as $t \to \infty$ (see Proposition 3.5 in \cite{valesin}), so that
\begin{equation}
\label{eq:density_goes_to_zero}
\text{for all finite } I \subset \Z,\; \P\left[\{\eta^x_t: x \in \Z\} \cap  I \neq \varnothing\right] \xrightarrow{t \to \infty} 0.
\end{equation}
Finally, we will need the bound
\begin{equation}
\label{eq:key_rw_estimate_rs}
\begin{split}
\text{for any } u > 0 \text{ there exists } C > 0 \text{ such that, for $t$ large enough and any } x < y, \\\P\left[\eta^x_s, \eta^y_s \neq \triangle,\; \eta^x_s > \eta^y_s + u\sqrt{t} \text{ for some } s \leq t \right] < \frac{C}{\sqrt{t}}.
\end{split}
\end{equation}
For coalescing random walks having symmetric jump distribution with finite third moments, this estimate is given by Lemma 2.0.4 in \cite{sun}. As $(\eta^x_t)$ and $(\eta^y_t)$ are not exactly coalescing random walks, the proof of the mentioned lemma has to be adapted to the present context. Given the method of proof of Theorem 6.1 in \cite{valesin}, this adaption does not involve anything new, so we do not include it here.

\subsection{Interface}
Given $\xi\in\{0,1,2\}^\Z$, we write
$$r(\xi) = \sup\{x \in \Z:\xi(x) = 1\},\quad \ell(\xi) = \inf\{x\in\Z: \xi(x) = 2\}.$$
Define
\begin{equation}\label{eq:def_of_Omega}\Omega = \left\{\begin{array}{ll} \xi\in\{0,1,2\}^Z:&\#\{x<0:\xi(x) = 1\} = \#\{x>0:\xi(x) = 2\} = \infty,\\[.2cm]&\#\{x<0:\xi(x) = 2\} < \infty,\;\#\{x>0:\xi(x) = 1\} < \infty  \end{array}\right\};\end{equation}
in particular, $r(\xi) < \infty$ and $\ell(\xi) > -\infty$ for any $\xi \in \Omega$.

As mentioned in the Introduction, $(\xi^h_t)_{t \geq 0}$ denotes the contact process started from the heaviside configuration, \eqref{eq:def_heaviside}, and 
$$r_t = r(\xi^h_t),\qquad \ell_t = \ell(\xi^h_t),\qquad i_t = (r_t + \ell_t)/2,\qquad t \geq 0.$$
The interval delimited by $r_t$ and $\ell_t$ is the \textit{interface}, and $i_t$ is the \textit{interface position}, at time $t$.
Using \eqref{eq:behM}, it is easy to show that, almost surely,
$$\xi^h_t \in \Omega \text{ for all } t \geq 0.$$
It will be useful to have the following rough bound on the displacement of $r_t$ and $\ell_t$. 
\begin{lemma}\label{lem:no_faster}
For any $\varepsilon > 0$ and $\sigma > 0$ there exists $S_0 > 0$ such that, if $S \ge S_0$ and $\xi_0$ satisfies $\xi_0 \equiv 2$ on $(0,\infty)$, then with probability larger than $1 - \varepsilon$,
\begin{equation}\nonumber
\text{for any } t \ge 0,\; r(\xi_t),\ell(\xi_t) \leq S + \sigma t.
\end{equation}
\end{lemma}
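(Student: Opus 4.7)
The plan is to reduce to the heaviside initial configuration by attractiveness, and then bound $r(\xi^h_t)$ and $\ell(\xi^h_t)$ using a descendancy-barrier argument on the type-$2$ side.

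\textbf{Step 1 (reduction).} Under the partial order $\prec$ from the claim following \eqref{eq:attract_multi}, we have $\xi^h_0\prec \xi_0$: the hypothesis gives $\{\xi^h_0=2\}=(0,\infty)\subseteq \{\xi_0=2\}$, and $\{\xi_0=1\}\subseteq(-\infty,0]=\{\xi^h_0=1\}$. Attractiveness then yields $\xi^h_t\prec \xi_t$ for all $t$, so $r(\xi_t)\le r(\xi^h_t)$ and $\ell(\xi_t)\le \ell(\xi^h_t)$; it suffices to prove the stated bound with $\xi^h$ in place of $\xi$.

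\textbf{Step 2 (descendancy barrier).} Pick $\bar\beta'\in(0,\sigma)$. By the descendancy-barrier construction of \cite{ampv} underlying Lemma~\ref{lem:couple_ones} (which goes through at any positive speed below the edge speed), for $S$ large enough there exists, with probability at least $1-\varepsilon$, some $y^+\in[1,S]$ from which a descendancy barrier of slope $\bar\beta'$ emanates: every occupied site $(x,s)$ of $\zeta^\Z$ lying in the cone $\mathcal C_{y^+}=\{(x,s):|x-y^+|\le \bar\beta' s\}$ has $y^+$ as its unique $H$-ancestor. Since $\xi^h_0(y^+)=2$, every such $(x,s)\in\mathcal C_{y^+}$ automatically satisfies $\xi^h_s(x)=2$.

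\textbf{Step 3 (bounding $r$).} Suppose $\xi^h_t(x)=1$. By \eqref{eq:best_path} the unique ancestor of $(x,t)$ lies in $(-\infty,0]\subseteq(-\infty,y^+)$. If $(x,t)\in\mathcal C_{y^+}$, the barrier forces this ancestor to be $y^+$, a contradiction. If $x>y^++\bar\beta' t$, the ancestor path $\upgamma$ has $\upgamma(0)\le 0<y^+$ and $\upgamma(t)=x>y^+$, so (modulo an initial transient of length $O(R/\bar\beta')$, for which the $\kappa$-speed bound \eqref{eq:behM} already rules out $r>S+\sigma t$) it must intersect the central axis $s\mapsto(y^+,s)\subseteq \mathcal C_{y^+}$; using \eqref{eq:prop_ancestry_path}, the unique ancestor of $(x,t)$ then equals that of some $(y^+,s)$, namely $y^+$, again a contradiction. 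Therefore $r(\xi^h_t)<y^+-\bar\beta' t\le y^+\le S\le S+\sigma t$.

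\textbf{Step 4 (bounding $\ell$) and main obstacle.} Since every occupied site of $\zeta^\Z_t$ in $\mathcal C_{y^+}$ is of type $2$, it suffices to show that $\zeta^\Z_t\cap \mathcal C_{y^+}\ne\varnothing$ for \emph{every} $t\ge 0$ with high probability; this gives $\ell(\xi^h_t)\le y^++\bar\beta' t\le S+\sigma t$. The delicate part is making this uniform in $t$: for large $t$ it follows from the positive asymptotic density of the upper invariant measure of $\zeta^\Z$; for small $t$ it holds because $y^+$ itself is occupied on an initial interval of positive length with uniformly positive probability; the intermediate regime can be treated by combining the barrier event with a Lemma~\ref{lem:desc_bar_sides}-type estimate applied to the initial $2$-block $[1,S]$, which guarantees an occupied site of the associated one-type contact process in an interval that lies inside $\mathcal C_{y^+}$ once the cone is wide enough. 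A union bound over unit time intervals using \eqref{eq:behTT}--\eqref{eq:many_die} makes the resulting probability of failure smaller than any prescribed $\varepsilon$ by choosing $S$ large.
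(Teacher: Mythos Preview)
Your Step~1 reduction by attractiveness is fine and matches the paper's first move. The real problem is in Steps~2--3.

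The descendancy barrier of \cite{ampv}, as summarised after Lemma~\ref{lem:couple_ones}, is a \emph{one-type} reachability statement: for $(z,s)\in\mathcal C_{y^+}$ with $\Z\times\{0\}\leftrightarrow(z,s)$ one also has $(y^+,0)\leftrightarrow(z,s)$. It does \emph{not} say that the multitype-determining path $\upgamma^*_{\Z,z,s}$ of \eqref{eq:good_gamma}--\eqref{eq:claim_about_multi_constr} starts at $y^+$; that path is singled out by the condition that each jump lands on a $\zeta^\Z$-empty site, and the barrier event says nothing about where such a path must originate. Hence your Step~2 inference that every occupied cone site ``automatically satisfies $\xi^h_s(x)=2$'' is unjustified. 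A sanity check shows it is in fact false: the Step~3 output $r(\xi^h_t)<y^+-\bar\beta' t$ on an event of probability exceeding $1-\varepsilon$ would force $r_t\to-\infty$ with that probability, which is incompatible with the diffusive behaviour of the interface established elsewhere in the paper. (There are secondary issues too: infection paths jump by up to $R$ and need not touch the line $\{(y^+,s):s\ge 0\}$, and \eqref{eq:prop_ancestry_path} concerns the forward ancestor process $\eta$, not $\upgamma^*$.) Step~4 inherits the same gap, since it also relies on occupied cone sites being of type~2.

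The paper's argument is quite different and never asserts a multitype barrier. For $r$ it passes to the dual via \eqref{eq:duality_equation_multi}, bounding $\P[\xi^h_t(x)=1]\le\P[\eta_t\ne\triangle,\,|\eta_t|\ge x]$, then invokes the sub-Gaussian estimate \eqref{eq:bondRW} and sums over $x\ge \tfrac{S}{3}+\sigma' t$ and integer $t$, interpolating between integers by a crude Poisson comparison; this yields $r(\xi_t)\le\tfrac{2S}{3}+\sigma'' t$ for all $t$ with high probability. For $\ell$ it applies Lemma~\ref{lem:desc_bar_sides} to the colourblind process \eqref{eq:colorblind} to guarantee an occupied site in $[\tfrac{2S}{3}+\sigma''t,\,S+\sigma t]$ at every time; combined with the bound on $r$, that site must carry type~2, giving $\ell(\xi_t)\le S+\sigma t$.
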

\begin{proof}
It is sufficient to prove the result for $\sigma \in (0,\bar\beta)$, where $\bar\beta$ is the constant that appears in Lemmas \ref{lem:couple_ones} and \ref{lem:desc_bar_sides}.
We fix $\sigma', \sigma''$ with 
$$0< \sigma'< \sigma''<\sigma.$$
Using the joint construction of the multitype contact process and the ancestor processes (as described in Subsection \ref{ss:mcp} and in particular equation \eqref{eq:duality_equation_multi}) together with the assumption that $\xi_0 \equiv 2$ on $(0,\infty)$ and Claim \ref{eq:attract_multi}, we have
\begin{align*}
\P\left[\xi_t(x) = 1\right] \leq \P\left[\xi^h_t(x) = 1\right]\leq \P\left[\eta^x_t\neq \triangle,\; \eta^x_t \leq 0 \right].
\end{align*}
If $x \geq 0$, the right-hand side is smaller than or equal to
$$\P\left[\eta^x_t \neq \triangle,\;|\eta^x_t - x| \geq x\right] = \P\left[\eta_t \neq \triangle, \;|\eta_t| \geq x \right] \stackrel{\eqref{eq:bondRW}}{\leq} Ce^{-cx^2/t} + Cte^{-cx}.$$
Combining this with a union bound, we get
\begin{equation}\P\left[r(\xi_t) \geq \frac{S}{3} +\sigma't \text{ for some } t \in \N\right] = \P\left[\xi_t(x) = 1 \text{ for some } t \in \N \text{ and } x \geq \frac{S}{3} + \sigma't\right] < \frac{\varepsilon}{3}\label{eq:bound_before_intervals}\end{equation}
if $S$ is large enough. We then bound
\begin{align*}&\P\left[r(\xi_t) < S/3 + \sigma't\;\text{ and }\; r(\xi_s) \geq 2S/3 + \sigma''t \text{ for some } s \in [t, t+1] \right]\\[.2cm]& \leq \P\left[(-\infty,\;S/3 + \sigma' t) \times \{t\} \leftrightarrow [2S/3 + \sigma'' t,\infty) \times [t, t+1]  \right] < e^{-c(S + \sigma t/2)}\end{align*}
for some $c > 0$, by a comparison with a Poisson random variable (describing the number of arrivals in a certain space-time region; we omit the details). Together with \eqref{eq:bound_before_intervals}, this shows that, if $S$ is large enough,
\begin{equation}\label{eq:bound_after_intervals}
\P\left[r(\xi_t) > \frac{2S}{3} + \sigma'' t \text{ for some } t \geq 0\right] < \varepsilon/2. 
\end{equation}
By Lemma \ref{lem:desc_bar_sides} and \eqref{eq:colorblind}, increasing $S$ if necessary we have
\begin{equation}\label{eq:after_colorblind}
\P\left[\xi_t \equiv 0 \text{ on } \left[\frac{2S}{3} + \sigma''t,\; S + \sigma t\right] \text{ for some } t \geq 0 \right] < \varepsilon/2.
\end{equation}
To conclude,
\begin{align*}
&\P\left[r(\xi_t),\ell(\xi_t) \leq S + \sigma t \text{ for all } t \geq 0\right]\\&\geq \P\left[\text{for all } t \geq 0,\;r(\xi_t) < \frac{2S}{3} + \sigma''t \text{ and } \xi_t(x) \neq 0 \text{ for some } x \in \left[\frac{2S}{3}+\sigma''t, S + \sigma t\right] \right] > 1-\varepsilon. 
\end{align*}

\end{proof}

Given a Harris system $H$ and $s \geq 0$, we define the regenerated interface process $(i^s_t)_{t \geq s}$ as follows:
\begin{equation} \label{eq:def_ist}\text{for any } x \in \Z \text{ and } t \geq s,\;\text{on } \{\lfloor i_s(H) \rfloor = x\},\text{ let } i^{s}_t(H) = x + i_{t-s}(H \circ \theta(x,s)).\end{equation}
Let us explain this definition in words. Using the Harris system $H$, we construct the contact process started from the heaviside configuration $\xi^h_0$ and evolve it up to time $s$, obtaining the configuration $\xi^{h}_s$ with corresponding interface position $i_s$. Then, we artificially put 1's on $(-\infty, \lfloor i_s\rfloor]$ and 2's on $(\lfloor i_s \rfloor,\infty)$ and, using $H_{[s,t]}$, we continue evolving the process; the resulting interface position at time $t$ is $i^s_t$. Note in particular that
\begin{equation} \label{eq:prop_new_inter}
(i^s_{s + t} - i^s_s)_{t \geq 0} \stackrel{\text{(dist.)}}{=} (i_t)_{t\geq 0} \quad \text{ and } \quad |i^s_s - i_s| = 0 \text{ or } \frac12.
\end{equation}
In Section \ref{ss:proof_reg}, we will prove:
\begin{theorem}
\label{thm:interface_regeneration}
For any $\varepsilon > 0$ there exists $K > 0$ such that, for any $s \geq 0$,
\begin{equation}\label{eq:thminterface}\P\left[|i^s_t - i_t| < K \text{ for all } t \geq s\right] > 1-\varepsilon.\end{equation}
\end{theorem}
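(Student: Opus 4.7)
The plan is to combine interface tightness at time $s$ (Theorem~\ref{thm:size_inter_tight}) with a descendancy-barrier coupling executed through the common Harris system, exploiting that the color-blind projections of $\xi^h$ and $\xi'$ are driven by the same supercritical one-type contact process.

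I first condition on $\mathcal{F}_s$. By Theorem~\ref{thm:size_inter_tight}, for $L = L(\varepsilon)$ large enough, with probability at least $1 - \varepsilon/4$ the interface of $\xi^h_s$ is contained in $[i_s - L, i_s + L]$; on this event $\{\xi^h_s = 1\} \subset (-\infty, i_s + L]$ and $\{\xi^h_s = 2\} \subset [i_s - L, \infty)$. By the color-blind identity \eqref{eq:colorblind}, $\{\xi^h_s \neq 0\}$ has the same distribution as a one-type supercritical contact process started from full occupancy evaluated at time $s$, whose marginal density of non-zero sites is bounded below by $\bar\nu(\{1\}) > 0$ uniformly in $s$. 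Since outside the interface window every non-zero site of $\xi^h_s$ carries the ``correct'' type (1 on the left, 2 on the right), for $M = M(\varepsilon, L)$ large, with probability at least $1 - \varepsilon/4$ there are positive densities of anchor sites in $[i_s - M, i_s - L]$ with $\xi^h_s = 1$ and in $[i_s + L, i_s + M]$ with $\xi^h_s = 2$; by construction, the heaviside $\bar\xi$ carries the same types at every such anchor.

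Next, I apply a descendancy-barrier argument in the spirit of Lemmas~\ref{lem:couple_ones} and~\ref{lem:desc_bar_sides}. With probability at least $1 - \varepsilon/4$, the shifted Harris system $H \circ \theta(0,s)$ admits, starting from the dense collections of left and right anchors, a rightward descendancy barrier from a left anchor and a leftward one from a right anchor. Inside the resulting spatio-temporal corridor, every site reachable by an infection path from any initial configuration is reachable from one of the anchor sites; since this is a property of $H \circ \theta(0,s)$ alone, it applies simultaneously to both $\xi^h$ and $\xi'$. Because in both processes the left anchors carry type $1$ and the right anchors carry type $2$, inside the corridor the multitype type assignments of $\xi^h$ and $\xi'$ must coincide: every type-$1$ site is a descendant of a left anchor, every type-$2$ site of a right anchor. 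This forces $|r(\xi^h_t) - r(\xi'_t)|$ and $|\ell(\xi^h_t) - \ell(\xi'_t)|$ to remain bounded by a constant $K = K(M)$ depending only on $\varepsilon$, uniformly in $t \ge s$, giving the claimed $|i^s_t - i_t| < K$.

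The main obstacle, where I expect the bulk of the technical care to go, is in this last step: upgrading the one-type descendancy-barrier coupling to a genuine multitype coupling despite the $0$ sites of $\xi^h_s$ (on which $\xi^h_s$ and $\bar\xi$ disagree). The concern is that a site inside the corridor whose best forward infection path originates outside the corridor could, a priori, carry different types in the two processes because the path's source is a $0$ site of $\xi^h_s$ but non-zero in $\bar\xi$. Ruling this out relies on the observation that, outside the anchor window, every non-zero site of $\xi^h_s$ already carries the same type as $\bar\xi$, so even when the best infection path to a corridor site originates from outside, its starting type agrees between the two processes; carrying out this contamination-control argument carefully is the heart of the proof.
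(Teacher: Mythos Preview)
Your sketch has the right architecture (find structure near the interface at time $s$, then use barrier-type coupling through the shared Harris system), and you correctly identify where the difficulty lies. But the resolution you offer for the ``$0$ sites'' obstacle does not work, and this is a genuine gap, not just a matter of filling in details.

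The problem is that the \emph{best} infection path determining the type at $(x,t)$, namely $\upgamma^*_{\{\xi_0\neq 0\},x,t}$ from \eqref{eq:good_gamma}--\eqref{eq:claim_about_multi_constr}, depends on the entire set $\{\xi_0\neq 0\}$, not just on the types at its endpoints. Since $\{\bar\xi\neq 0\}=\Z$ while $\{\xi^h_s\neq 0\}\subsetneq \Z$, the two processes have \emph{different} best paths in general, and your sentence ``even when the best infection path to a corridor site originates from outside, its starting type agrees between the two processes'' presupposes a single common path, which is false. A descendancy barrier only controls \emph{existence} of infection paths (``reachable from anywhere implies reachable from the anchor''); it says nothing about which path is the type-determining one. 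Relatedly, ``positive density of anchor sites'' is weaker than what is actually needed: the one-type coupling of Lemma~\ref{lem:couple_ones} requires the two configurations to agree on an interval and to be \emph{fully occupied} on the flanking segments, so that the sets of $0$'s coincide there.

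The paper closes exactly this gap by inserting an intermediate configuration. First it shows (Proposition~\ref{prop:gamma_sl}, via Lemmas~\ref{lem:gamma_pi_good} and~\ref{lem:after_one_second}) that with high probability $\xi^h_s$ has \emph{fully occupied} isolation segments of $1$'s and $2$'s on either side of the interface, not merely a positive density of anchors. Those segments allow, via Lemma~\ref{lem:couple_left} and Corollary~\ref{cor:all_tog}, a coupling of $(\xi^h_t)_{t\ge s}$ with a process $(\hat\xi_t)$ whose initial configuration $\hat\xi_0$ agrees with $\xi^h_s$ between the segments and is filled in with $1$'s and $2$'s outside; the proof uses the multitype monotonicity \eqref{eq:attract_multi} together with $E_1$ (coincidence of the color-blind processes inside the corridor, possible precisely because the $0$'s of $\xi^h_s$ and $\hat\xi_0$ agree there) to force $\{\xi_t=1\}=\{\hat\xi_t=1\}$. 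Only then, with $\hat\xi_0$ and the heaviside $\tilde\xi_0$ \emph{both fully occupied} and differing on a bounded interval, does the ancestor duality of Lemma~\ref{prop:almost_all_couples_all} give $\hat\xi_t=\tilde\xi_t$ for all large $t$. Your proposal collapses these two steps into one and loses the mechanism (monotonicity plus matched $0$-sets) that makes the multitype coupling go through.
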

As a consequence we obtain
\begin{corollary}
\label{cor:interface_motion_tight}
For any $\varepsilon > 0$ and $r > 0$ there exists $K > 0$ such that
\begin{equation*}
\P\left[ \sup_{s \leq t \leq s+r} |i_t - i_s| > K\right] < \varepsilon \quad \text{ for all } s \ge 0.
\end{equation*}
\end{corollary}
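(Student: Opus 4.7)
The plan is to deduce the corollary directly from Theorem \ref{thm:interface_regeneration} and Lemma \ref{lem:no_faster}. The regenerated interface $(i^s_t)_{t\ge s}$ stays within a bounded window of $(i_t)_{t \ge s}$ for all future times, and by \eqref{eq:prop_new_inter} the increments of $(i^s_t)$ from time $s$ are distributed exactly as those of $(i_t)$ from time $0$. Consequently, the uniform displacement bound over $[s, s+r]$ will reduce to a bound on $\sup_{0 \le u \le r}|i_u|$, which follows from the ballistic bound of Lemma \ref{lem:no_faster} after a short symmetry argument.

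Fix $\varepsilon, r > 0$. First I would apply Theorem \ref{thm:interface_regeneration} with $\varepsilon/2$ to obtain $K_0$ such that $\P[|i^s_t - i_t| < K_0 \text{ for all } t \ge s] > 1 - \varepsilon/2$ for every $s \ge 0$. On this event, the triangle inequality together with the deterministic bound $|i^s_s - i_s| \le 1/2$ from \eqref{eq:prop_new_inter} gives, for every $t \in [s, s+r]$,
$$|i_t - i_s| \le |i_t - i^s_t| + |i^s_t - i^s_s| + |i^s_s - i_s| < K_0 + \tfrac12 + |i^s_t - i^s_s|.$$
Since $(i^s_{s+u} - i^s_s)_{u \ge 0}$ is distributed as $(i_u)_{u \ge 0}$ by \eqref{eq:prop_new_inter}, the random variable $\sup_{s \le t \le s+r}|i^s_t - i^s_s|$ has the same law as $\sup_{0 \le u \le r}|i_u|$, so it remains to produce $M = M(\varepsilon, r)$ with $\P[\sup_{0\le u \le r}|i_u| > M] < \varepsilon/2$.

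For this last step I would invoke Lemma \ref{lem:no_faster} on the heaviside initial condition (which satisfies $\xi^h_0 \equiv 2$ on $(0,\infty)$) with speed parameter equal to $1$: for $S$ large enough, $r_u, \ell_u \le S + u$ for all $u \ge 0$ with probability larger than $1 - \varepsilon/4$. For the matching lower bound I would appeal to the distributional symmetry of the multitype contact process under simultaneous spatial reflection $x \mapsto -x$ and type swap $1 \leftrightarrow 2$: this transformation sends the heaviside configuration to itself up to one site, and sends the pair $(r_u, \ell_u)$ to $(-\ell_u, -r_u)$. Since the transformed initial condition still satisfies the hypothesis of Lemma \ref{lem:no_faster}, a second application yields $-r_u, -\ell_u \le S + u$ with probability $> 1 - \varepsilon/4$. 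Intersecting the two events gives $|r_u|, |\ell_u| \le S + r$ for all $u \in [0,r]$, hence $|i_u| \le S + r$, and one may take $M = S + r$.

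Combining the displays produces the corollary with $K = K_0 + \tfrac12 + S + r$, which is uniform in $s \ge 0$. The argument is largely mechanical given Theorem \ref{thm:interface_regeneration} and Lemma \ref{lem:no_faster}, and I do not expect a real obstacle; the only mildly delicate point is the appeal to the reflection/type-swap symmetry, needed to promote the one-sided conclusion of Lemma \ref{lem:no_faster} to a two-sided bound on $|i_u|$.
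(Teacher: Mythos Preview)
Your proof is correct and follows essentially the same route as the paper: use Theorem \ref{thm:interface_regeneration} together with the triangle inequality and \eqref{eq:prop_new_inter} to reduce the uniform-in-$s$ statement to a bound on $\sup_{0\le u\le r}|i_u|$. The only difference is that you work harder than necessary on this last step: since $\sup_{0\le u\le r}|i_u|$ is a single almost surely finite random variable (not depending on $s$), the paper simply notes that $\P[\sup_{0\le u\le r}|i_u|>(K-1)/2]\to 0$ as $K\to\infty$, without invoking Lemma \ref{lem:no_faster} or the reflection/type-swap symmetry.
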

\begin{proof} For any $s, r, K$, by \eqref{eq:prop_new_inter},
\begin{align*}
\P\left[\sup_{t \in [s,s+r]} |i_t-i_s| > K \right] \leq \P\left[\sup_{t \in [s,\infty)} |i^s_t - i_t| > K/2\right] + \P \left[ \sup_{t \in [0,r]} |i_t| > (K-1)/2\right].
\end{align*}
Now, for fixed $r$, the second term vanishes as $K \to \infty$, and the first term does so as well by Theorem \ref{thm:interface_regeneration}.
\end{proof}

\section{Convergence of finite-dimensional projections}
\label{s:fdd}
\begin{lemma}\label{lem:interface_nowhere} For any $\varepsilon > 0$ there exists $t_0 > 0$ such that
\begin{equation} \label{eq:interface_nowhere}\P\left[i_t = x \right] < \varepsilon \text{ for any } x \in \frac{1}{2}\Z,\; t \geq t_0.\end{equation}
\end{lemma}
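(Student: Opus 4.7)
Plan:

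I would combine the interface regeneration (Theorem \ref{thm:interface_regeneration}) with Fourier analysis on the lattice $\frac12\Z$. Setting $\phi_t(\theta) = \E[e^{i\theta i_t}]$, which is $4\pi$-periodic since $i_t \in \frac12\Z$, Fourier inversion on this lattice yields
\[ \sup_{x\in\frac12\Z}\P[i_t=x] \le \frac{1}{4\pi}\int_0^{4\pi}|\phi_t(\theta)|\,d\theta. \]
By dominated convergence (with $|\phi_t|\le 1$), the lemma reduces to the pointwise vanishing $|\phi_t(\theta)|\to 0$ for a.e.\ $\theta\in(0,4\pi)$.

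The key input from Theorem \ref{thm:interface_regeneration} is that for any $s\in[0,t]$ we have $i_t = \lfloor i_s\rfloor + \widetilde i_{t-s} + E$, where $\widetilde i_{t-s}$, built from the time-shifted Harris system, is independent of $i_s$ and equidistributed with $i_{t-s}$, and $|E|\le K(\varepsilon)$ off an event of probability at most $\varepsilon$. Taking expectations of the exponential gives the approximate multiplicativity
\[ |\phi_t(\theta) - \phi_s(\theta)\phi_{t-s}(\theta)| \le C\bigl(|\theta|\,K(\varepsilon) + \varepsilon\bigr). \]
For each fixed $\theta\in(0,4\pi)$, one can verify $|\phi_{s_0}(\theta)|<1$ for some small $s_0\ge 1$: this amounts to checking that $i_{s_0}$ is not supported on any proper arithmetic sub-progression of $\frac12\Z$, which holds because, by direct inspection of the dynamics, both parities of $r_t+\ell_t$ are reachable in unit time with positive probability. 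Iterating the approximate multiplicativity with this $s_0$ produces the geometric contraction $|\phi_{ns_0}(\theta)|\lesssim \rho^n$ up to cumulative error, where $\rho = |\phi_{s_0}(\theta)| < 1$.

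The main obstacle is controlling the cumulative error in this iteration. Since $K(\varepsilon)\to\infty$ as $\varepsilon\to 0$, the per-step error $|\theta|K(\varepsilon)+\varepsilon$ cannot be made uniformly small, and naive iteration gives only $\limsup_t|\phi_t(\theta)| \le \text{(small but positive)}$ rather than zero. Closing the gap requires a careful choice of $\varepsilon$ at each iteration, possibly complemented by an auxiliary lower bound such as $\operatorname{Var}(i_t)\to\infty$ (itself obtainable by iterating the regeneration and using near-additivity of variance).

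A conceptually different, Fourier-free route would work directly with the duality of Subsection \ref{ss:mcp}: the event $\{r_t = a\}$ forces $\eta^a_t\le 0$ (alive) together with $\eta^{a+1}_t \in \{\triangle\}\cup(0,\infty)$. The sub-case where $\eta^{a+1}_t$ is alive and positive contributes only $O(1/\sqrt t)$ uniformly in $a$ by \eqref{eq:pairs_meet}, while the sub-case $\eta^{a+1}_t = \triangle$ must be treated by exploiting the vanishing-density estimate \eqref{eq:density_goes_to_zero} together with the ordering of the ancestor process in the starting position. Tightness of $|r_t-\ell_t|$ from Theorem \ref{thm:size_inter_tight} then transfers the anti-concentration from $r_t$ to $i_t$.
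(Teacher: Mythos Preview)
Your Fourier route is creative but, as you yourself note, the cumulative-error control is a genuine obstacle: since $K(\varepsilon)\to\infty$ as $\varepsilon\to 0$, each regeneration step carries an error of order $|\theta|K(\varepsilon)+\varepsilon$ that cannot be driven to zero, and you do not actually close this gap. The suggestion to supplement with $\Var(i_t)\to\infty$ does not by itself yield anti-concentration at the level of single atoms.

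Your second, duality-based route is much closer to the paper's argument, but the sub-case $\eta^{a+1}_t=\triangle$ is a real gap. Neither \eqref{eq:density_goes_to_zero} nor ``ordering of the ancestor process in the starting position'' gives a bound on $\P[\eta^a_t\le 0,\ \eta^{a+1}_t=\triangle]$ that is small uniformly in $a$: the probability that a single ancestor dies is a positive constant independent of $t$, and for $a$ near the origin the event $\{\eta^a_t\le 0\}$ also has probability bounded away from zero.

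The paper avoids this entirely by applying interface tightness \emph{first} rather than last. If $i_t=x$ and $|r_t-\ell_t|\le L$, then both $r_t$ and $\ell_t$ lie in $[x-L,x+L]$; since $\xi^h_t(r_t)=1$ and $\xi^h_t(\ell_t)=2$, there exist $z,w\in[x-L,x+L]$ with $\eta^z_t,\eta^w_t$ \emph{both alive} and $\eta^z_t\le 0<\eta^w_t$. One then sums \eqref{eq:pairs_meet} over the $O(L^2)$ pairs $(z,w)$ in $[x-L,x+L]$. The point is to compare the ancestor of a site in state $1$ with the ancestor of a site known to be in state $2$ (namely $\ell_t$, which is nearby by tightness), rather than with the ancestor of the deterministic neighbour $a+1$, which may be dead. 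Your own invocation of Theorem~\ref{thm:size_inter_tight} at the end would in fact supply exactly this nearby state-$2$ site; moving it to the front of the argument is what makes the $\triangle$-case disappear.
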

\begin{proof}
Let $\varepsilon > 0$. By \eqref{eq:interface_tightness}, we can obtain $L > 0$ such that for all $t$, $\P[|r_t - \ell_t| > L] < \varepsilon /2$. For any $t$ and $x$ we have
$$\P[i_t = x] \leq \P[|r_t - \ell_t| > L] + \P \left[ \text{there exist } z,w \in [x-L,\;x+L]: \xi^h_t(z) = 1,\;\xi^h_t(w) = 2\right].$$
Switching to the dual process, the second probability can be written as
$$\begin{aligned}&\P\left[\text{there exist } z,w \in [x-L,\;x+L]: \eta^z_t,\;\eta^w_t \neq \triangle,\; \eta^z_t \leq 0,\; \eta^w_t > 0\right]\\[.2cm]
&\hspace{4cm}\leq \sum_{z,w \in [x-L,\;x+L]} \; \P\left[\eta^z_t, \eta^w_t \neq \triangle,\; \eta^z_t \neq \eta^w_t\right].\end{aligned}$$
By \eqref{eq:pairs_meet}, then $t$ is large enough the sum is smaller than $\varepsilon/2$ for any $x$, so we are done.
\end{proof}

\begin{lemma}\label{lem:coinc_eta_inter} For any $\varepsilon > 0$ there exists $t_0 > 0$ such that
\begin{equation}\left| \P[i_t > x\sqrt{t}] - \P\left[\xi^h_t(\lfloor x\sqrt t \rfloor) = 1\;|\; \xi^h_t(\lfloor x\sqrt{t}\rfloor) \neq 0\right]\right| < \varepsilon \text{ for any } x \in \R\text{ and } t \geq t_0.
\end{equation}
\end{lemma}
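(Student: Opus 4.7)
The plan is to fix $\varepsilon > 0$, write $y := \lfloor x\sqrt{t}\rfloor$, and observe that by the color-blindness identity \eqref{eq:colorblind} the denominator $\rho_t := \P[\xi^h_t(y) \neq 0]$ does not depend on $y$ and converges to $\P[(0,0)\leftrightarrow\infty] > 0$; hence it is bounded below by a positive constant for $t$ large, and it suffices to prove that $|\rho_t\cdot\P[i_t > x\sqrt{t}] - \P[\xi^h_t(y)=1]|$ is uniformly small.

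First I would use Theorem \ref{thm:size_inter_tight} to pick $L$ (large, in particular $L>2$) with $\P[|r_t-\ell_t|>L] < \varepsilon/10$ for every $t$, and then invoke Lemma \ref{lem:interface_nowhere} to find $t_0$ such that for $t \geq t_0$ and any $y' \in \frac{1}{2}\Z$, $\P[i_t = y'] < \varepsilon/(40(L+1))$. Summing over the at most $4L+2$ values of $y' \in \frac{1}{2}\Z \cap [x\sqrt{t}-L,\,x\sqrt{t}+L]$ yields $\P[|i_t - x\sqrt{t}| \leq L] < \varepsilon/10$, uniformly in $x$. Introduce the good event $G := \{|r_t-\ell_t| \leq L\} \cap \{|i_t - x\sqrt{t}| > L\}$, for which $\P[G^c] < \varepsilon/5$.

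The next step is an event-reduction on $G$. On $G \cap \{i_t > x\sqrt{t}\}$ one actually has $i_t > x\sqrt{t}+L$, so $\ell_t \geq i_t - L/2 > y$, forcing $\xi^h_t(y) \neq 2$; symmetrically, on $G \cap \{i_t < x\sqrt{t}\}$ we get $r_t \leq i_t + L/2 < y$, so $\xi^h_t(y) \neq 1$. This identifies the indicators: on $G$ we have $\mathds{1}_{\{\xi^h_t(y)=1\}} = \mathds{1}_{\{i_t > x\sqrt{t}\}}\,\mathds{1}_{\{\xi^h_t(y)\neq 0\}}$. Taking expectations gives
$$|\P[\xi^h_t(y)=1] - \P[i_t > x\sqrt{t},\,\xi^h_t(y) \neq 0]| \leq \P[G^c] < \varepsilon/5.$$

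The remaining and most delicate step is an asymptotic independence: one needs $|\P[i_t > x\sqrt{t},\,\xi^h_t(y) \neq 0] - \rho_t\cdot\P[i_t > x\sqrt{t}]|$ to be small uniformly in $x$. Heuristically this encodes the fact that on $G \cap \{i_t > x\sqrt{t}+L\}$ the site $y$ lies strictly inside the ``type $1$ region'', where by color-blindness the density of occupied sites is governed by the one-type contact process and equals $\rho_t$. I would try to make this rigorous via the reverse Harris system: by \eqref{eq:duality_equation_multi} the occupancy event $\{\xi^h_t(y)\neq 0\}$ is equivalent to the survival of the single ancestor process $(\eta^y_s)$, hence is essentially a local object determined by $\hat H_{[0,t]}$ along that trajectory; conversely, on $G$ the interface position is determined by the types carried by ancestors emanating from sites at distance at least $L$ from $y$, and the coalescing-random-walk description of the ancestor process recorded in \eqref{eq:pairs_meet} and \eqref{eq:density_goes_to_zero} should allow one to decouple these two sources of randomness. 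Implementing this spatial decoupling carefully, so that the bound is uniform in $x \in \R$, is the main obstacle I anticipate. Once it is in place, combining with the previous step gives $\P[\xi^h_t(y)=1] \approx \rho_t\cdot\P[i_t > x\sqrt{t}]$, and dividing by $\rho_t$ finishes the proof.
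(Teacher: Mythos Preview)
Your reduction to the event $G$ and the identity $\mathds{1}_{\{\xi^h_t(y)=1\}} = \mathds{1}_{\{i_t > x\sqrt{t}\}}\,\mathds{1}_{\{\xi^h_t(y)\neq 0\}}$ on $G$ is correct and matches the paper's first bound. The gap is exactly where you flag it: you have not proved the asymptotic independence
\[
\big|\P[i_t > x\sqrt{t},\,\xi^h_t(y)\neq 0] - \rho_t\cdot\P[i_t > x\sqrt{t}]\big| < \varepsilon,
\]
and your proposed spatial decoupling via the reversed Harris system is not a workable plan as stated. The event $\{\xi^h_t(y)\neq 0\}$ is \emph{not} local: it is the survival of the ancestor $\eta^y$ up to time $t$, which explores an unbounded space--time region. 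Likewise, $\{i_t > x\sqrt{t}\}$ depends on $\xi^h_t$ at \emph{all} sites, including $y$ itself (for instance, $\ell_t > y$ requires $\xi^h_t(y)\neq 2$). So there is no spatial separation to exploit, and the coalescence estimates \eqref{eq:pairs_meet}, \eqref{eq:density_goes_to_zero} do not by themselves give what you need.

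The paper's device is a \emph{temporal} decoupling, and it is short. Fix $S>0$ with $\P[S<T^0<\infty]<\varepsilon$ via \eqref{eq:behTT}. Then replace $\{\xi^h_t(y)\neq 0\}=\{\Z\times\{0\}\leftrightarrow (y,t)\}$ by $\{\Z\times\{t-S\}\leftrightarrow (y,t)\}$, and replace $\{i_t>x\sqrt{t}\}$ by $\{i_{t-S}>x\sqrt{t}\}$ using Corollary~\ref{cor:interface_motion_tight} (the interface moves a bounded amount in time $S$) together with Lemma~\ref{lem:interface_nowhere}. The point is that $\{\Z\times\{t-S\}\leftrightarrow (y,t)\}$ depends only on $H_{[t-S,t]}$, while $\{i_{t-S}>x\sqrt{t}\}$ depends only on $H_{[0,t-S]}$, so they are \emph{exactly} independent. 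This yields the desired factorisation up to $O(\varepsilon)$, uniformly in $x$, without any spatial argument. The ingredient you are missing is Corollary~\ref{cor:interface_motion_tight}; once you see that the interface displacement over a fixed time window is tight, the temporal splitting is the natural move.
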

\begin{proof}
Fix $\varepsilon > 0$. Using \eqref{eq:behTT}, we can choose $S > 0$ such that
\begin{equation}
\label{eq:recap_die_fast}\P\left[S < T^0 < \infty\right] < \varepsilon
\end{equation}
Using Corollary \ref{cor:interface_motion_tight}, we then choose $S'>0$ such that
\begin{equation}\label{eq:recap_move_fast}\P\left[\text{there exists } s \in [t, t+S]:\;|i_s - i_t| \geq S'\right] < \varepsilon \text{ for all } t \geq 0.\end{equation}
Increasing $S'$ if necessary, by  \eqref{eq:interface_tightness} we can also assume that
\begin{equation}
\label{eq:recap_interface}\P\left[|r_t - \ell_t|>S'\right] < \varepsilon \text{ for all } t \geq 0;\\[.2cm]
\end{equation}
Finally, using Lemma \ref{lem:interface_nowhere}, we can choose $t_0 > S$ such that
\begin{equation}\label{eq:recap_nowhere}
\P\left[i_t \in [x-S',\;x+S']\right] < \varepsilon \quad \text{ for all } t \geq 0 \text{ and } x \in \Z.
\end{equation}

Now fix $t \geq t_0$ and $x \in \R$. Denoting by $\triangle$ the symmetric difference between sets, we have the following estimates:
\begin{align}\label{eq:rcp_bound_1}
\begin{split} &\P\left[\{\xi^h_t(\lfloor x\sqrt{t}\rfloor) = 1 \} \;\triangle\; \{\xi^h_t(\lfloor x \sqrt{t} \rfloor) \neq 0,\; i_t > x\sqrt{t} \} \right] \\&\hspace{3cm}\leq \P\left[|i_t - x\sqrt{t}| \leq S' \right] + \P\left[|r_t - \ell_t| > S'\right] \stackrel{\eqref{eq:recap_interface},\eqref{eq:recap_nowhere}}{\leq} 2\varepsilon; \end{split}\\[.2cm]
\label{eq:rcp_bound_2}\begin{split} &\P\left[\{i_{t-S} > x\sqrt{t}\}\;\triangle\;\{i_t > x\sqrt{t}\} \right] \\&\qquad\leq \P\left[|i_t - x\sqrt{t}| \leq S'\right] + \P\left[|i_{t-S} - x\sqrt{t}| \leq S'\right] + \P\left[|i_t - i_{t-S}| > S'\right] \stackrel{\eqref{eq:recap_move_fast}, \eqref{eq:recap_nowhere}}{<} 3\varepsilon; \end{split}\\[.2cm]
\begin{split}&\P\left[\Z\times \{t-S\} \;\leftrightarrow \; (x\sqrt{t},t),\;\Z\times\{0\} \;\nleftrightarrow\;(x\sqrt{t},t)\right]\\&\hspace{4.5cm} \leq \P\left[(0,0)\;\leftrightarrow \Z\times \{S\},\; (0,0)\;\nleftrightarrow \;\infty \right] \stackrel{\eqref{eq:recap_die_fast}}{<} \varepsilon.\end{split}\label{eq:rcp_bound_3}
\end{align}
With these bounds at hand, we are ready to prove the statement of the lemma. In the following computation, the symbol $\approx$ means that the absolute value of the difference between the left-hand side and the right-hand side is at most $5\varepsilon$. 
\begin{align*}
\P\left[\xi^h_t(\lfloor x\sqrt{t}\rfloor) = 1\right] &\stackrel{\eqref{eq:rcp_bound_1}}{\approx} \P\left[\xi^h_t(\lfloor x\sqrt{t}\rfloor) \neq 0,\; i_t>x\sqrt{t}\right]\\[.2cm]
&\stackrel{\eqref{eq:rcp_bound_2},\eqref{eq:rcp_bound_3}}{\approx} \P\left[\Z \times \{t-S\} \;\leftrightarrow \; (\lfloor x\sqrt{t}\rfloor, t),\; i_{t-S} > x \sqrt{t}\right]\\[.2cm]
&= \P\left[\Z \times \{t-S\} \;\leftrightarrow \; (\lfloor x\sqrt{t} \rfloor, t)\right] \cdot \P\left[i_{t-S} > x\sqrt{t}\right]\\[.2cm]
&\stackrel{\eqref{eq:rcp_bound_3}}{\approx} \P\left[\xi^h_t(\lfloor x\sqrt{t}\rfloor) \neq 0\right] \cdot \P\left[i_t > x\sqrt{t}\right].
\end{align*}
We then have
\begin{align*}
&\left| \P\left[i_t > x\sqrt{t}\right] -\P\left[ \xi^h_t(\lfloor x\sqrt{t}\rfloor) = 1\;|\; \xi^h_t(\lfloor x\sqrt{t}\rfloor)\neq 0\right]\right|\\[.2cm]
&=\P\left[\xi^h_t(\lfloor x\sqrt{t}\rfloor) \neq 0\right]^{-1}\cdot \left|\P\left[ \xi^h_t(\lfloor x\sqrt{t}\rfloor) \neq 0\right]\cdot \P\left[i_t > x\sqrt{t}\right] - \P\left[\xi^h_t(\lfloor x \sqrt{t}\rfloor) = 1\right] \right|\\[.2cm]
&\leq \P[T^0 = \infty]^{-1} \cdot 15\varepsilon,
\end{align*}
that is, at most a universal constant times $\varepsilon$. This completes the proof.
\end{proof}

\begin{proposition}
\label{prop:1dd}
As $t \to \infty$, $\frac{i_t}{\sqrt{t}}$ converges in distribution to $\mathcal{N}(0,\sigma^2)$, where $\sigma$ is as in \eqref{eq:conv_norm}.
\end{proposition}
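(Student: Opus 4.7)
The plan is to reduce the statement to the CLT \eqref{eq:conv_norm} for a single ancestor via duality. Fix $x \in \R$; I want to compute $\lim_{t \to \infty}\P[i_t > x\sqrt t]$. By Lemma \ref{lem:coinc_eta_inter} it suffices to analyze
$$\P\left[\xi^h_t(\lfloor x\sqrt t\rfloor) = 1 \,\big|\, \xi^h_t(\lfloor x\sqrt t\rfloor) \neq 0\right].$$
Since the heaviside initial configuration $\xi^h_0$ is non-zero at every site, the duality formula \eqref{eq:duality_equation_multi} gives
$$\P[\xi^h_t(y) = 1] = \P[\eta^y_t \neq \triangle,\; \eta^y_t \leq 0], \qquad \P[\xi^h_t(y) \neq 0] = \P[\eta^y_t \neq \triangle],$$
and by the translation identity \eqref{eq:eqdisteta} (with $r=0$), $\eta^y_t - y \stackrel{\mathrm{d}}{=} \eta_t$. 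Hence, with $y = \lfloor x \sqrt t \rfloor$,
$$\P\left[\xi^h_t(y) = 1 \,\big|\, \xi^h_t(y) \neq 0\right] = \frac{\P[\eta_t \neq \triangle,\; \eta_t \leq -\lfloor x\sqrt t \rfloor]}{\P[\eta_t \neq \triangle]}.$$

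Next I would convert the conditioning on $\{\eta_t \neq \triangle\}$ to the conditioning on $\{(0,0) \leftrightarrow \infty\}$ used in the definition of $\tilde\P$. Writing $p = \P[T^0 = \infty] > 0$, the event $\{\eta_t \neq \triangle\}$ equals $\{T^0 > t\}$, so by \eqref{eq:behTT},
$$\P[\eta_t \neq \triangle] = p + \P[t < T^0 < \infty] = p + O(e^{-ct}),$$
and similarly
$$\P[\eta_t \neq \triangle,\; \eta_t \leq -\lfloor x\sqrt t\rfloor] = p \cdot \tilde\P[\eta_t \leq -\lfloor x\sqrt t\rfloor] + O(e^{-ct}).$$
Dividing,
$$\P\left[\xi^h_t(y) = 1 \,\big|\, \xi^h_t(y) \neq 0\right] \;=\; \tilde\P\!\left[\frac{\eta_t}{\sqrt t} \leq -\frac{\lfloor x\sqrt t\rfloor}{\sqrt t}\right] + o(1).$$

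Finally, I would invoke the CLT \eqref{eq:conv_norm}: under $\tilde\P$, $\eta_t/\sqrt t \Rightarrow \mathcal{N}(0,\sigma^2)$. Since the limit is continuous at $-x$, the right-hand side above tends to $\P[\mathcal{N}(0,\sigma^2) \leq -x] = \P[\mathcal{N}(0,\sigma^2) \geq x]$ by symmetry. Combining with Lemma \ref{lem:coinc_eta_inter} (whose $\varepsilon$ can be taken arbitrarily small), I conclude
$$\P[i_t > x\sqrt t] \xrightarrow{t \to \infty} \P[\mathcal{N}(0,\sigma^2) > x]$$
for every $x \in \R$, which is the desired convergence in distribution. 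The proof requires no delicate step: once duality is invoked, everything follows from the single-ancestor CLT; the only slightly finicky point is the replacement of $\{\eta_t \neq \triangle\}$ by $\{T^0 = \infty\}$ in the conditioning, handled cleanly by the exponential tail \eqref{eq:behTT}.
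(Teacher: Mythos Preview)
Your proof is correct and follows essentially the same route as the paper: reduce to $\P[\xi^h_t(\lfloor x\sqrt t\rfloor)=1\mid \xi^h_t(\lfloor x\sqrt t\rfloor)\neq 0]$ via Lemma~\ref{lem:coinc_eta_inter}, rewrite this via duality and translation invariance in terms of a single ancestor, and invoke the CLT \eqref{eq:conv_norm}. The paper simply asserts the identity $\P[\xi^h_t(y)=1\mid \xi^h_t(y)\neq 0]=\tilde\P^y[\eta^y_t\le 0]$ as a ``fact'', whereas you (more carefully) note that the left-hand side is conditioned on $\{T^0>t\}$ rather than $\{T^0=\infty\}$ and use \eqref{eq:behTT} to absorb the $O(e^{-ct})$ discrepancy.
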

\begin{proof}
The statement follows from \eqref{eq:conv_norm}, Lemma \ref{lem:coinc_eta_inter} and the fact that
$$\P\left[\xi^h_t(\lfloor x\sqrt{t}\rfloor) = 1\;|\;\xi^h_t(\lfloor x\sqrt{t}\rfloor) \neq 0\right] = \tilde \P^{\lfloor x \sqrt{t}\rfloor}\left[ \eta^{\lfloor x \sqrt{t} \rfloor}_t \leq 0\right] = \tilde \P^0\left[\frac{\eta^0_t}{\sqrt{t}} \leq - \frac{\lfloor x \sqrt{t} \rfloor}{\sqrt{t}}\right].$$
\end{proof}

\begin{proofof}\textit{Proposition \ref{prop:fdd}.}
Fix $0 < a_1 < \ldots < a_k$. We have
\begin{align} \nonumber&t^{-1/2}\left(i_{a_1 t},\; i_{a_2 t} - i_{a_1 t},\;\ldots,\; i_{a_k t} - i_{a_{k-1}t}\right)
\\\label{eq:term_g1}&\qquad= t^{-1/2}\left(i_{a_1 t},\; i^{a_1 t}_{a_2 t} - i^{a_1 t}_{a_1 t},\;\ldots,\; i^{a_{k-1}t}_{a_k t} - i^{a_{k-1}t}_{a_{k-1}t}\right)\\
\label{eq:term_g2}&\qquad\quad+t^{-1/2} \left(0,\;i_{a_2 t} - i^{a_1t}_{a_2t}  + i^{a_1t}_{a_1t} -  i_{a_1t} ,\;\ldots,\;i_{a_kt} - i^{a_{k-1}t}_{a_kt} + i^{a_{k-1}t}_{a_{k-1}t} - i_{a_{k-1}t}\right).
\end{align}
Theorem \ref{thm:interface_regeneration} implies that the term in \eqref{eq:term_g2} converges to zero in probability as $t \to \infty$. Since the elements of the vector in \eqref{eq:term_g1} are independent and satisfy
$$i^{a_i t}_{a_{i+1} t} - i^{a_i t}_{a_i t} \stackrel{\text{(dist.)}}{=} i_{(a_{i+1} - a_i) t},$$
Proposition \ref{prop:1dd} shows that \eqref{eq:term_g1} converges in distribution, as $t \to \infty$, to the distribution prescribed in Proposition \ref{prop:fdd}.
\end{proofof}

\section{Tightness in $D$}
Most of the effort in this section will go into proving the following uniform bound on the displacement of the interface position.
\begin{lemma}\label{lem:inter_not_move}
For any $\varepsilon > 0$ there exists $U > 0$ such that, for large enough $t$ and any $r > 0$,
\begin{equation*}
\P\left[\sup_{r \leq s \leq r+t} |i_s - i_r| \leq U\sqrt{t} \right] > 1 - \frac{\varepsilon}{U^2}.
\end{equation*}
\end{lemma}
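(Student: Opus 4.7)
The plan is to combine interface regeneration (Theorem~\ref{thm:interface_regeneration}) with an Ottaviani-type maximal inequality, using the Gaussian tails of $i_u/\sqrt{u}$ inherited from duality with the ancestor process.

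First I would reduce to $r = 0$. Applying Theorem~\ref{thm:interface_regeneration} at time $r$ with parameter $\varepsilon/(2U^2)$, one obtains $K = K(\varepsilon,U)$ such that, outside an event of probability at most $\varepsilon/(2U^2)$, $|i^r_s - i_s| < K$ for every $s \ge r$. On this event $|i_s - i_r| \le |i^r_s - i^r_r| + 2K + \tfrac12$, and since $(i^r_{r+u} - i^r_r)_{u \ge 0}$ has the same law as $(i_u)_{u \ge 0}$, for $t$ sufficiently large the desired bound reduces to
\[\P\!\left[\sup_{0 \le u \le t} |i_u| > U\sqrt{t}/2\right] \le \varepsilon/(2U^2).\]
Next, combining Lemma~\ref{lem:coinc_eta_inter} with the ancestor tail bound \eqref{eq:bondRW} gives, for $u$ large, the single-time Gaussian estimate
\[\P\bigl[|i_u| > v\sqrt{u}\bigr] \le C e^{-c v^2} + o_u(1),\]
while for bounded $u$ the same probability is controlled by \eqref{eq:behM}; hence for every $u \le t$ and every $v > 0$, $\P[|i_u| > v\sqrt{t}] \le Ce^{-cv^2}$ up to a negligible error.

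Now I would run an Ottaviani-type argument. Let $\tau = \inf\{s \ge 0 : |i_s| > U\sqrt{t}/2\}$. By the strong Markov property of the Harris system, Theorem~\ref{thm:interface_regeneration} extends to the stopping time $\tau$: conditionally on $\mathcal{F}_\tau$, $(i^\tau_{\tau + u} - i^\tau_\tau)_{u \ge 0}$ has the same distribution as $(i_u)_{u \ge 0}$ and is independent of $\mathcal{F}_\tau$, and $|i^\tau_s - i_s| \le K'$ for all $s \ge \tau$ outside a conditional event of probability at most $\varepsilon/(4U^2)$. On the intersection of these two good events with $\{\tau \le t\}$, the inequality $|i^\tau_t - i^\tau_\tau| \le U\sqrt{t}/4 - K' - \tfrac12$ (which, by the Gaussian tail applied to the independent copy of $i_{t - \tau}$, has conditional probability at least $1/2$ for $U$ large) forces $|i_t - i_\tau| \le U\sqrt{t}/4$ and hence $|i_t| > U\sqrt{t}/4$. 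Unconditioning,
\[\P[\tau \le t] \le 2\,\P[|i_t| > U\sqrt{t}/4] \le 2Ce^{-cU^2/16} \le \varepsilon/(2U^2)\]
for $U$ large, which combined with the reduction above yields the claim.

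The main obstacle is the rigorous extension of Theorem~\ref{thm:interface_regeneration} from deterministic times to the stopping time $\tau$. Since the regenerated interface $i^s$ depends only on $H \circ \theta(\lfloor i_s \rfloor, s)$, which is independent of $\mathcal{F}_s$ by the strong Markov property of the Poisson Harris system, this extension is essentially standard but does require a careful verification. If one prefers to avoid it, one can instead apply regeneration only at the deterministic integer times $t_k = k$ for $0 \le k \le \lfloor t \rfloor$, pay a union-bound penalty of order $t$ times the regeneration failure probability, and fill in the between-integer gaps using Corollary~\ref{cor:interface_motion_tight}; for this variant one needs quantitative control on how $K(\varepsilon')$ grows as $\varepsilon' \to 0$, which ultimately follows from the Harris speed bound \eqref{eq:behM}.
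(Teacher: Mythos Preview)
Your reduction to $r=0$ via regeneration at the deterministic time $r$ matches the paper, but the treatment of the case $r=0$ is where the two approaches diverge, and where your argument has a real gap.

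The Ottaviani step hinges on controlling $\P\bigl[|i^{\tau}_t - i_t| > K'\bigr]$ at the stopping time $\tau = \inf\{s: |i_s| > U\sqrt{t}/2\}$. You are right that $(i^\tau_{\tau+u} - i^\tau_\tau)_{u\ge 0}$ is independent of $\mathcal{F}_\tau$ by strong Markov --- that part is fine. What is \emph{not} automatic is the closeness $|i^\tau_s - i_s| \le K'$. Unwinding the proof of Theorem~\ref{thm:interface_regeneration}, this requires $\P[\xi^h_\tau \notin \Omega_{\varepsilon',K'}]$ to be small, which in turn rests on Proposition~\ref{prop:gamma_sl}: $\P[\xi^h_t \in \Gamma_{S,L}] > 1-\varepsilon'$ \emph{for deterministic $t$}. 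That proposition is proved via interface tightness~\eqref{eq:interface_tightness} and Lemma~\ref{lem:order_many}, both of which are single-time statements, and Lemma~\ref{lem:order_many} is proved by contradiction with no quantitative output. Since $\tau$ is defined in terms of the interface itself, the law of $\xi^h_\tau$ is biased and there is no reason the deterministic-$t$ bounds transfer; this is not a routine strong-Markov verification. Your fallback (union bound over integer times) runs into the same wall: it needs $K(\varepsilon') = o(\sqrt{1/\varepsilon'})$, and the non-quantitative Lemma~\ref{lem:order_many} blocks any such estimate from the paper as written.

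The paper avoids this entirely. For $r=0$ it never regenerates at a random time; instead it works directly in the dual. Lemma~\ref{cla4onevstun} (adapted from an argument of Sun for coalescing random walks) shows that an ancestor starting in $(-\infty,-U\sqrt{t}]$ and reaching $[0,\infty)$ by time $t$, while three ``barrier'' ancestors each stay in their own interval of width $u\sqrt t$, has probability $O(t^{-3/2})$. This exponent is exactly what survives a union bound over $O(t^{5/4})$ space-time starting points (Lemma~\ref{lem:still_dual}), yielding directly the maximal bound $\P[\sup_{s\le t}|i_s| > U\sqrt{t}] < \varepsilon/U^2$ (Lemma~\ref{lem:rell_not_move}) with no stopping-time machinery. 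The $\varepsilon/U^2$ form of the bound --- crucial for the later $1/\delta$ union in the tightness proof --- comes out of tuning the number and spacing of barriers, not from a Gaussian tail plus Ottaviani.
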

The proof will depend on several preliminary results. Before turning to them, let us first explain how Lemma \ref{lem:inter_not_move} allows us to conclude.\\[.2cm]
\begin{proofof}{\textit{Proposition \ref{prop:tight}.}}
For each $t > 0$, define the process $X^{(t)}$ by
$$X^{(t)}_s = t^{-1/2}\cdot i_{st},\qquad s\geq 0.$$
We want to show that the family of processes  $\{X^{(t)}:t \geq 0\}$ is tight in $D[0,\infty)$. As explained in  Section 16 of \cite{bill}, it is sufficient to prove that, for every $m > 0$,
\begin{equation}\begin{split}
\text{for all }m > 0 \text{ and } \varepsilon > 0 \text{ there exists }\delta > 0 \text{ and } t_0 > 0 \text{ such that }\\
\P\left[\sup_{k \in\{0,\ldots,\lfloor m/\delta \rfloor\}}\;\sup_{s \in [k\delta,\;(k+1)\delta ]} |X^{(t)}_s - X^{(t)}_{k\delta}| > \varepsilon \right] < \varepsilon \text{ for all }  t \geq t_0.\end{split}
\end{equation}
By the identity $X^{(t)}_m = \sqrt{m} \cdot X^{(mt)}_1$, it is sufficient to treat $m=1$. Then the above condition becomes
\begin{equation}
\label{eq:bill_cond}
\begin{split}&\text{for all } \varepsilon > 0 \text{ there exists } \delta > 0 \text{ such that, for large enough } t,\\&\qquad\P\left[\sup_{k \in\{0,\ldots,\lfloor 1/\delta \rfloor\}}\;\sup_{s \in [k\delta t,\;(k+1)\delta t]} |i_s - i_{k\delta t}| > \varepsilon \sqrt{t} \right] < \varepsilon.\end{split}
\end{equation}
Given $\varepsilon > 0$, using Lemma \ref{lem:inter_not_move}, we can find $U > 0$ and $t_0> 0$ such that, if $t \ge t_0$,
\begin{equation}\label{eq:just_bill}
\sup_{r\geq0}\;\P\left[\sup_{r \leq s \leq r + t}|i_s - i_{r}| > U\sqrt{t}\right] < \frac{\varepsilon^3}{U^2}
\end{equation}
Now, set $\delta = (\varepsilon/U)^2$. We then have, for $t \ge t_0/\delta$ and $t' = \delta t$,
\begin{align*}
&\P\left[\sup_{k \in\{0,\ldots,\lfloor 1/\delta \rfloor\}}\;\sup_{s \in [k\delta t,\;(k+1)\delta t]} |i_s - i_{k\delta t}| > \varepsilon \sqrt{t} \right]\leq \frac{1}{\delta} \cdot \sup_{r\geq 0}\;\P\left[\sup_{s \in [r,r+\delta t]}|i_s - i_r| > \varepsilon\sqrt{t} \right]\\[.2cm]
&\quad= \frac{1}{\delta} \cdot \sup_{r\geq 0}\;\P\left[\sup_{s \in [r,r+t']}|i_s - i_r| > \varepsilon\sqrt{\frac{t'}{\delta}} \right] = \frac{U^2}{\varepsilon^2}\cdot \sup_{r\geq 0}\;\P\left[\sup_{s \in [r,r+t']}|i_s - i_r| > U\sqrt{t'} \right] \stackrel{\eqref{eq:just_bill}}{<} \varepsilon
\end{align*}
as required in \eqref{eq:bill_cond}.
\end{proofof}

Our first step towards the proof of Lemma \ref{lem:inter_not_move} are the following generalizations of Lemmas \ref{lem:no_renewals_ab} and \ref{lem_x_goes_to_y_first}. Since the proofs are line-by-line repetitions of the proofs of these earlier results, we omit them. 

\begin{lemma}\label{lem_x_goes_to_y}
\begin{enumerate}
\item 
There exists $c > 0$ such that the following holds. For any $(x_1,r_1), (x_2,r_2) \in \Z \times [0,\infty)$ and $b > a \geq \max(r_1, r_2)$, we have
\begin{equation}\label{eq:times_after}\P\left[\begin{array}{l}\eta^{(x_1,r_1)}_s, \eta^{(x_2,r_2)}_s \neq \triangle \text{ for all } s \in [a,b],\\[.2cm] \min\big( T^{\big(\eta^{(x_1,r_1)}_s, s\big)},\; T^{\big(\eta^{(x_2,r_2)}_s, s\big)}\big)< \infty \text{ for all } s \in [a, b] \end{array}\right] < e^{-c(b-a)}.\end{equation}
\item
Let $(x_1, r_1), (x_2, r_2)\in \Z\times [0,\infty)$ and $\sigma$ be a stopping time (with respect to the sigma-algebra of Harris systems) with $\sigma \geq \max( r_1,r_2)$ almost surely. Let
$$\uptau = \inf\{t \geq \sigma: \eta^{(x_1,r_1)}_t, \eta^{(x_2,r_2)}_t \neq \triangle \text{ and }T^{\big(\eta^{(x_1,r_1)}_t,t\big)} = T^{\big(\eta^{(x_2,r_2)}_t,t\big)} = \infty\}. $$
For any $y_1, y_2 \in \Z$ and events $E, F$ on Harris systems,
\begin{equation}\begin{split}&\P\left[\uptau < \infty,\;H_{[0,\uptau]} \in E,\;\eta^{(x_1,r_1)}_\uptau = y_1,\;\eta^{(x_2,r_2)}_\uptau = y_2\text{ and } H \circ \theta(0,\uptau) \in F \right] \\[.2cm]
&=  \P\left[\uptau < \infty,\; H_{[0,\uptau]} \in E,\;\eta^{(x_1,r_1)}_\uptau = y_1,\;\eta^{(x_2,r_2)}_\uptau = y_2\right]
\cdot \tilde \P^{\{y_1,y_2\}} \left[H \in F\right].
\end{split} \label{eq:x_goes_to_y}\end{equation}
\end{enumerate}
\end{lemma}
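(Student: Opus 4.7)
Both assertions are direct, proof-by-proof generalizations of their single-ancestor counterparts Lemma~\ref{lem:no_renewals_ab} and Lemma~\ref{lem_x_goes_to_y_first}; I indicate how each of those proofs is adapted.

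For Part 1, the proof of Lemma~\ref{lem:no_renewals_ab} (Proposition~1 in \cite{neuhauser}) combines \eqref{eq:behTT} with a discretization of $[a,b]$ into unit-length subintervals and the strong Markov property for the ancestor process: on each subinterval one leverages that the descendancy of the current ancestor position either lives forever or dies quickly. In the joint setting, the event to bound requires that on $[a,b]$ there is no ``simultaneous renewal'' of the two ancestors, i.e., no $s$ with both $(\eta^{(x_1,r_1)}_s,s),(\eta^{(x_2,r_2)}_s,s)\leftrightarrow\infty$, while both remain alive. In particular, on each unit subinterval at least one of the two specific descendancies must be finite, so one applies \eqref{eq:behTT} to whichever ancestor witnesses the failure on that piece; the resulting factor of~$2$ (for the choice of witness) contributes only a multiplicative constant per subinterval and is absorbed in the exponential rate. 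Chaining the subinterval estimates via the strong Markov property yields the bound $e^{-c(b-a)}$.

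For Part 2, I mimic the proof of Lemma~\ref{lem_x_goes_to_y_first} line by line. Set $\sigma_0=\sigma$ and, for $k\ge0$,
\[
\sigma_{k+1}=\begin{cases}
\max\!\big(T^{(\eta^{(x_1,r_1)}_{\sigma_k},\sigma_k)},\,T^{(\eta^{(x_2,r_2)}_{\sigma_k},\sigma_k)}\big)&\text{if both ancestors are alive at }\sigma_k\text{ with at least one finite }T,\\[.2cm]
\sigma_k&\text{if }\sigma_k<\infty\text{ but some ancestor equals }\triangle\text{ at }\sigma_k,\\[.2cm]
\infty&\text{if }\sigma_k=\infty.
\end{cases}
\]
Then $(\sigma_k)_{k\ge0}$ is an increasing sequence of stopping times for the Harris-system filtration. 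The two-ancestor analogue of \eqref{eq:prop_ancestry_sum}, together with induction, yields $\{\uptau<\infty\}=\bigcup_{k\ge0}\{\uptau=\sigma_k<\infty,\;\sigma_{k+1}=\infty\}$, mirroring \eqref{eq:claim_uptau}. Applying this decomposition to the left-hand side of \eqref{eq:x_goes_to_y} and invoking the strong Markov property at each $\sigma_k$ factors out the term $\tilde\P^{\{y_1,y_2\}}[H\in F]$---this is precisely where the two-point conditioning $\tilde\P^{A}$ with $A=\{y_1,y_2\}$ enters. Resumming over $k$ reassembles the right-hand side of \eqref{eq:x_goes_to_y}.

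The principal obstacle in both parts is the bookkeeping needed to juggle the two ancestor processes jointly: for Part 1, one must consistently select, on each subinterval, a ``witnessing'' ancestor on which to spend the \eqref{eq:behTT} estimate; for Part 2, one must check that the sequence $(\sigma_k)$ is well-defined, almost surely strictly increasing while finite, and that its escape to $\infty$ characterizes exactly $\{\uptau<\infty\}$. Both verifications require nothing beyond the iterative application of \eqref{eq:prop_ancestry_sum} and \eqref{eq:behTT} already exploited in the single-ancestor proofs, which is why the authors are content to cite those arguments.
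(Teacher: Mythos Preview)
Your outline matches the paper's approach exactly---the authors explicitly omit the proof as a ``line-by-line repetition'' of Lemmas~\ref{lem:no_renewals_ab} and~\ref{lem_x_goes_to_y_first}, and your account of how each adapts is right in spirit. However, the recursion you write for Part~2 contains a concrete error: the $\max$ should be a $\min$.

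To see the failure, suppose that at $\sigma_k$ both ancestors are alive with $T^{(\eta^{(x_1,r_1)}_{\sigma_k},\sigma_k)}=s_1<\infty$ and $T^{(\eta^{(x_2,r_2)}_{\sigma_k},\sigma_k)}=\infty$. Your rule puts $\sigma_{k+1}=\max(s_1,\infty)=\infty$, so your decomposition would force $\uptau\in\{\sigma_0,\dots,\sigma_k,\infty\}$. But $\uptau\neq\sigma_k$ (one descendancy is finite there), while for $t\ge s_1$ the \emph{original} process $\eta^{(x_1,r_1)}$ need not be dead---it agreed with the restarted process $\eta^{(\eta^{(x_1,r_1)}_{\sigma_k},\sigma_k)}$ only on $[\sigma_k,s_1)$ via \eqref{eq:prop_ancestry_sum}---and nothing prevents both original ancestors from simultaneously acquiring infinite descendancy at some finite time $>s_1$. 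That would give $\uptau<\infty$ with $\uptau\notin\{\sigma_j\}_j$, and the claimed identity $\{\uptau<\infty\}=\bigcup_k\{\uptau=\sigma_k,\ \sigma_{k+1}=\infty\}$ breaks down.

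Replacing $\max$ by $\min$ repairs the argument: for $t\in(\sigma_k,\sigma_{k+1})$ the ancestor realizing the minimum is still coupled to its restart by \eqref{eq:prop_ancestry_sum}, and since its restart has descendancy contained in that of $(\eta^{(x_i,r_i)}_{\sigma_k},\sigma_k)$, one gets $T^{(\eta^{(x_i,r_i)}_t,t)}\le\sigma_{k+1}<\infty$, hence $\uptau\neq t$. Moreover $\sigma_{k+1}=\infty$ iff both descendancies from $\sigma_k$ are infinite, iff $\uptau=\sigma_k$. This is the faithful two-ancestor analogue of the recursion in Lemma~\ref{lem_x_goes_to_y_first}. (Your case split also omits ``both alive with both $T$ infinite''; with $\min$ this is absorbed by the convention $\min(\infty,\infty)=\infty$.)
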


Given $u > 0$ and $t > 0$, define $K^{u, t} = -13u\sqrt{t}$ and the intervals
$$\begin{aligned}
&I^{u, t}_k = K^{u, t} + \left((4k-1)u\sqrt{t}, \; 4ku\sqrt{t} \right), \qquad 1 \leq k \leq 3;\\
&J^{u, t}_0 = (-\infty, K^{u, t}];\\
&J^{u, t}_k = K^{u, t} + \left[(4k-3)u\sqrt{t},\; (4k-2)u\sqrt{t} \right], \qquad 1 \leq k \leq 3;\\
&J^{u, t}_4 = K^{u, t} + [13u\sqrt{t},\; \infty) = [0, \infty).
\end{aligned}$$
We will often omit the superscripts and write $K$, $I_k$ and $J_k$. These definitions, as well as the event treated in the following lemma, are illustrated in Figure \ref{fig4brownie}.

\begin{figure}[htb]
\begin{center}
\setlength\fboxsep{0pt}
\setlength\fboxrule{0.5pt}
\fbox{\includegraphics[width = 1.0\textwidth]{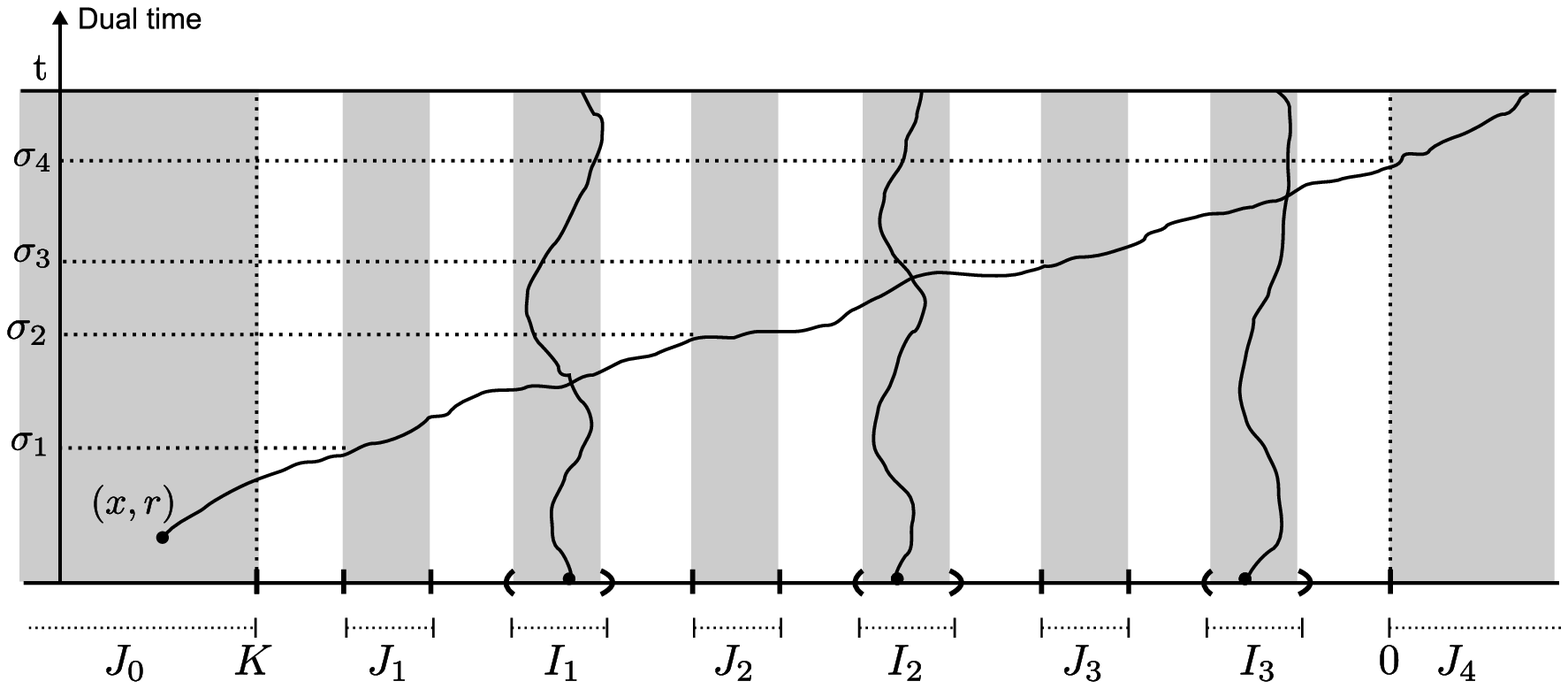}}
\end{center}
\caption{The event of Lemma \ref{cla4onevstun}.}
\label{fig4brownie}
\end{figure}

\begin{lemma}
\label{cla4onevstun}
For any $u > 0$, there exists $C = C(u) > 0$ and $t_0 = t_0(u) > 0$ such that, for any $t \ge t_0$, $(x, r) \in J^{u, t}_0 \times [0, t]$, $x_1 \in I_1^{u,t}$, $x_2 \in I_2^{u,t}$ and $x_3 \in I_3^{u,t}$, we have
$$\P\left[\eta^{(x, r)}_t \geq 0 \text{ and } \eta^{x_i}_s \in I_k^{u, t} \text{ for all } k \in \{1,2,3\} \text{ and } s \in [0,t]  \right] \leq \frac{C}{t^{3/2}}.$$
\end{lemma}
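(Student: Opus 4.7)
The plan is to exploit the separations $J_1, J_2, J_3$ as \emph{checkpoints} that $\eta^{(x,r)}$ must traverse on its way from $J_0$ to $J_4$, and to apply the key estimate \eqref{eq:key_rw_estimate_rs} three times (once per crossing of an $I_k$), chaining the bounds via the multi-ancestor renewal structure of Lemma \ref{lem_x_goes_to_y}. A single application of \eqref{eq:key_rw_estimate_rs} would only give a $1/\sqrt{t}$ bound; the extra factor $1/t$ comes from the near-independence of the three crossings, enforced by the spatial separation of the $J_k$ by gaps of order $u\sqrt{t}$.

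First I would reduce to a triple intersection of overtaking events. On the event $E$ in question, $\eta^{(x,r)}_t \ge 0$ while $\eta^{x_k}_t \in I_k \subseteq (-\infty, -u\sqrt{t})$ for each $k \in \{1,2,3\}$, so $\eta^{(x,r)}_t > \eta^{x_k}_t + u\sqrt{t}$. Setting
\[A_k = \{\eta^{(x,r)}_s, \eta^{x_k}_s \ne \triangle,\; \eta^{(x,r)}_s > \eta^{x_k}_s + u\sqrt{t} \text{ for some } s \in [r, t]\},\]
one has $E \subseteq A_1 \cap A_2 \cap A_3$, and \eqref{eq:key_rw_estimate_rs} (adapted via \eqref{eq:eqdisteta} to the unequal starting times of the two ancestors in each pair) gives each $\P[A_k] \le C/\sqrt{t}$.

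To multiply the three factors, I would introduce stopping times $\sigma_k = \inf\{s \ge r : \eta^{(x,r)}_s \in J_k\}$ for $k=1,2,3$. On $E$ the $\sigma_k$ are finite and ordered $r \le \sigma_1 < \sigma_2 < \sigma_3 \le t$, and at each $\sigma_k$ the ancestor $\eta^{(x,r)}$ lies a distance at least $u\sqrt{t}$ to the left of $\eta^{x_k} \in I_k$, so the overtake in $A_k$ must occur in $[\sigma_k, t]$. The core of the argument is then iterative: conditioning on the history up to $\sigma_3$, the strong Markov property together with Lemma \ref{lem_x_goes_to_y} allow us to regenerate the Harris system and apply \eqref{eq:key_rw_estimate_rs} to the pair $(\eta^{(x,r)}, \eta^{x_3})$ restarted at $\sigma_3$, giving a conditional bound of $C/\sqrt{t}$ for the $k=3$ overtake. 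Iterating at $\sigma_2$ (bounding the $k=2$ overtake, with the $\eta^{x_3}$-pair absorbed into the prefactor) and at $\sigma_1$ (bounding $k=1$), the three factors of $C/\sqrt{t}$ multiply to give $C/t^{3/2}$.

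The main technical obstacle is the rigorous implementation of the iterative conditioning. Lemma \ref{lem_x_goes_to_y} regenerates at a time when the chosen pair of ancestors \emph{both} survive to infinity, whereas at $\sigma_k$ we only know that the relevant ancestors are alive at time $t$. I expect the fix is to replace $\sigma_k$ by the first subsequent time at which the pair $(\eta^{(x,r)},\eta^{x_k})$ both enjoy the survival-to-infinity condition, using \eqref{eq:times_after} to show that such a time is found within $O(1)$ delay and that the adjustment costs at most a bounded factor; after this modification, the regenerated pair starts from positions with $\eta^{(x,r)} < \eta^{x_k}$ at distance $\gtrsim u\sqrt{t}$, and \eqref{eq:key_rw_estimate_rs} applies directly under the conditional measure $\tilde \P^{\{\cdot,\cdot\}}$, with the resulting order-one normalization $\P[\cdot \leftrightarrow \infty]^{-1}$ absorbed into $C$.
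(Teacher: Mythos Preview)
Your proposal is correct and matches the paper's approach closely: the paper also introduces stopping times $\sigma_k = \inf\{s: \eta^{(x,r)}_s \geq \inf J_k\}$, iteratively peels off one factor of $C/\sqrt{t}$ per crossing via Lemma \ref{lem_x_goes_to_y} and \eqref{eq:key_rw_estimate_rs}, and handles the passage to the joint-survival renewal time $\uptau$ exactly as you describe. The only refinement is that the paper makes the ``adjustment costs at most a bounded factor'' step explicit by introducing two error events $E_1$ (no displacement exceeding $t^{1/4}$ over any time window of length $t^{1/8}$, via \eqref{eq:bondRW}) and $E_2$ (the delay $\uptau - \sigma_3$ is at most $t^{1/8}$, via \eqref{eq:times_after}), which together contribute an additive $Ce^{-ct^\gamma}$ at each iteration rather than a pure multiplicative constant.
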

\begin{proof}
Fix $u > 0$. Choose $t$ large enough that \begin{equation}\label{eq:choice_of_t0}2 t^{1/4} < u\sqrt{t}.\end{equation} Fix $(x, r) \in J_0 \times [0, t],\; x_1 \in I_1,\; x_2 \in I_2,\; x_3 \in I_3$. Define
$$\sigma_k = \inf\{s: \eta^{(x,r)}_s \geq \inf J_k\},\quad 1 \leq k \leq 4.$$
The probability in the statement of the lemma is less than 
$$\P[\sigma_4 \leq t,\; \text{for all } s \leq \sigma_4 \text{ and }k \leq 3,\; \eta^{x_k}_s \in I_k].$$ We will show that
\begin{equation}\label{eqn4showthat}\begin{split} &\P\left[\sigma_4 \leq t,\; \eta^{x_k}_s \in I_k \text{ for all } s \le \sigma_4 \text{ and } k \leq 3\right] \\[.2cm]&\hspace{3.5cm}\leq Ce^{-ct^\gamma} + \frac{C}{\sqrt{t}}\; \P\left[\sigma_3 \leq t,  \eta^{x_k}_s \in I_k \text{ for all } k\leq 2 \text{ and } s \leq \sigma_3\right]\end{split}\end{equation}
for some $c, C, \gamma$ that only depend on $u$. To this end, we first define the events
\begin{align*}
&E_1 = \left\{\exists s, s'\in[r,t]: |s - s'| < t^{1/8},\;\eta^{(x,r)}_s\neq \triangle,\; \eta^{(x,r)}_{s'} \neq \triangle,\;|\eta^{(x,r)}_s - \eta^{(x,r)}_{s'}| > t^{1/4} \right\},\\[.3cm]
&E_2 = \left\{\begin{array}{l}\exists s, s' \in [r,t]: |s-s'| \geq t^{1/8},\;\eta^{(x,r)}_u\neq \triangle,\;\eta^{x_3}_u \neq \triangle\\[.2cm] \text{ and } \min\left(T^{(\eta^{(x,r)}_u,u)},\;T^{(\eta^{x_3}_u,u)}\right) < \infty \text{ for all } u \in [s,s']\end{array}\right\}
\end{align*}
and the random time
\begin{equation*}\uptau = \inf\{t \geq \sigma_3: \eta^{(x,r)}_t\neq \triangle,\;\eta^{x_3}_t \neq \triangle,\; (\eta^{(x,r)}_t,\;t) \leftrightarrow \infty,\;\left(\eta^{x_3}_t,t\right)\leftrightarrow \infty \}.\end{equation*} 
By \eqref{eq:bondRW} and \eqref{eq:x_goes_to_y}, we can find $\upgamma > 0$ such that
\begin{equation}
\P[E_1 \cup E_2] \leq Ce^{-ct^\upgamma}.\label{eq:first_bound_comp}
\end{equation}
Using \eqref{eq:choice_of_t0}, we have
$$\begin{aligned}&\left\{\sigma_4 \leq t;\;\eta^{x_k}_s \in I_k\text{ for all } s \leq \sigma_4,\; k \leq 3\right\} \cap E_1^c \\[.3cm]&\hspace{1cm}\subseteq \left\{ \begin{array}{l} \sigma_3 \leq t - t^{1/8};\; \eta^{x_k}_s \in I_k \text{ for all } s \leq \sigma_3 \text{ and } k \leq 2;\\[.2cm] \eta^{(x,r)}_s \leq \sup J_3 \text{ and } \eta^{x_3}_s \in I_3 \text{ for all } s \in [\sigma_3,\;\sigma_3 + t^{1/8}];\\[.2cm] \eta^{(x,r)}_s > \eta^{x_3}_s + u\sqrt{t} \text{ for some } s \in [\sigma_3 + t^{1/8}, t] \end{array}\right\}.\end{aligned}$$
We thus also get
$$\begin{aligned}
&\left\{\sigma_4 \leq t,\;\eta^{x_k}_s \in I_k\text{ for all } s \leq \sigma_4 \text{ and }\; k \leq 3\right\} \cap E_1^c \cap E_2^c
\\[.3cm]&\subseteq\left\{ \begin{array}{l}\sigma_3 \leq t - t^{1/8},\;\eta^{x_k}_s \in I_k \text{ for all }s \leq \sigma_3 \text{ and } k \leq 2,\\[.2cm]\uptau \leq \sigma_3 + t^{1/8},\; \eta^{(x,r)}_{\uptau} \leq \sup J_3,\;\eta^{x_3}_{\uptau} \in I_3,\; \eta^{(x,r)}_s > \eta^{x_3}_s + u\sqrt{t} \text{ for some } s \in[\uptau,t]\end{array} \right\} \\[.3cm]
&\subseteq \left\{ \begin{array}{l}\sigma_3 \leq t,\;\eta^{x_k}_s \in I_k \text{ for all } s \leq \sigma_3 \text{ and } k \leq 2,\\[.2cm]\uptau < \infty,\;\eta^{(x,r)}_{\uptau} \leq \sup J_3,\;\eta^{x_3}_{\uptau} \in I_3,\; \eta^{(x,r)}_s > \eta^{x_3}_s + u\sqrt{t} \text{ for some } s \in[\uptau,\uptau + t]\end{array} \right\}\\[.3cm]
&= \bigcup_{z\leq \sup J_3} \;\bigcup_{w \in I_3} \left\{ \begin{array}{l}\sigma_3 \leq t,\;\eta^{x_k}_s \in I_k \text{ for all } s\leq \sigma_3 \text{ and } k \leq 2,\\[.2cm]\uptau  < \infty,\;\eta^{(x,r)}_{\uptau} = z,\;\eta^{x_3}_{\uptau} = w,\; \eta^{(z,\uptau)}_s > \eta^{(w,\uptau)}_s + u\sqrt{t} \text{ for some } s \in [\uptau, \uptau + t] \end{array} \right\}.
\end{aligned}$$
Using this set inclusion and \eqref{eq:x_goes_to_y} we obtain
$$\begin{aligned}
&\P\left[ \left\{\sigma_4 \leq t,\;\eta^{x_k}_s \in I_k \text{ for all } s \leq \sigma_4 ,\; k\leq 2 \right\} \cap E_1^c \cap E_2^c \right]\\[.3cm]
&\leq \sum_{z \leq \sup J_3}\; \sum_{w \in I_3} \P\left[\sigma_3 \leq t,\;\eta^{x_k}_s \in I_k \text{ for all } s \leq \sigma_3 \text{ and } k \leq 2,\; \uptau < \infty,\;\eta^{(x,t)}_{\uptau} = z,\; \eta^{x_3}_{\uptau}= w \right]\\[.1cm]&\hspace{6cm}\cdot  \tilde \P^{\{z,w\}}\left[\eta^z_s > \eta^w_s +u\sqrt{t} \text{ for some } s \in [0,t]\right] \\[.3cm]
&\leq \P\left[\sigma_3 \leq t,\;\eta^{x_k}_s \in I_k \text{ for all } s \leq \sigma_3 \text{ and } k \leq 2 \right]\\[.1cm] &\hspace{6cm}\cdot  \sup_{z \leq \sup J_3,\; w \in I_3}   \tilde \P^{\{z,w\}}\left[\eta^z_s > \eta^w_s + u\sqrt{t} \text{ for some } s \in [0,t]\right]  \\[.3cm]
&\stackrel{\eqref{eq:key_rw_estimate_rs}}{\leq} \frac{C}{\sqrt{t}}  \cdot \P\left[\sigma_3 \leq t,\;\eta^{x_k}_s \in I_k \text{ for all } s \leq \sigma_3 \text{ and } k \leq 2 \right].
\end{aligned}$$
The desired result now follows from iterating this computation.
\end{proof}

\begin{lemma}\label{lem:still_dual}
For any $\varepsilon > 0$ there exists $U > 0$ such that, for large enough $t$,
\begin{equation}\label{eq:still_dual}
\P\left[\text{there exists } (x,r) \in [-U\sqrt{t} - t^{1/4}, -U\sqrt{t}] \times \{0,1,\ldots, \lfloor t \rfloor\}:\;\eta^{(x,r)}_t > 0 \right]<\frac{\varepsilon}{U^2}.
\end{equation}
\end{lemma}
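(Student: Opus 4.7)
My plan is to combine the single-ancestor Gaussian tail bound \eqref{eq:bondRW} with the coalescence structure of the ancestor process, since a direct union bound over the $O(t^{5/4})$ lattice points $(x,r)$ in $([-U\sqrt{t} - t^{1/4}, -U\sqrt{t}]\cap\Z) \times \{0,\ldots,\lfloor t\rfloor\}$ only yields a probability bound of order $t^{5/4} e^{-cU^2}$, which exceeds $\varepsilon/U^2$ for any fixed $U$ once $t$ is large.

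First, for each $(x,r)$ I would invoke \eqref{eq:bondRW} (translated via \eqref{eq:eqdisteta}) on the event $\{\eta^{(x,r)}_t \neq \triangle,\; |\eta^{(x,r)}_t - x| > U\sqrt{t}\}$, which contains $\{\eta^{(x,r)}_t > 0\}$ because $x \le -U\sqrt{t}$. This yields
\[
\P\bigl[\eta^{(x,r)}_t \neq \triangle,\; |\eta^{(x,r)}_t - x| > U\sqrt{t}\bigr] \;\leq\; C e^{-cU^2 t/(t-r)} + C(t-r)\,e^{-cU\sqrt{t}}.
\]
Summing the Poisson-tail (second) term over all $(x,r)$ in the region gives at most $C t^{9/4} e^{-cU\sqrt{t}}$, which vanishes as $t\to\infty$ for fixed $U$. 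The whole difficulty therefore lies in controlling the Gaussian term.

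For the Gaussian contribution I would exploit ancestor coalescence. At a fixed starting time $r$, the $\sim t^{1/4}$ ancestors born at lattice points in the strip coalesce rapidly: by \eqref{eq:pairs_meet} applied over the remaining time $t-r$, two ancestors starting at distance $\ell\le t^{1/4}$ apart have probability at most $C\ell/\sqrt{t-r}$ of still being distinct at time $t$, and $t^{1/4}\ll\sqrt{t-r}$ for most $r$, so the ancestors from each fixed starting time essentially collapse to $O(1)$ distinct trajectories. Ancestors born at later starting times $r>0$ either quickly merge with a surviving earlier trajectory (and so are already accounted for) or represent a "new" trajectory whose occurrence rate over time I would control using the density bound \eqref{eq:density_goes_to_zero} combined with the renewal structure of Proposition \ref{prop:rmn}. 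Having reduced to a small effective family, I would compare each remaining trajectory to a reference ancestor started near the origin and apply the crossing estimate \eqref{eq:key_rw_estimate_rs} to bound the probability of the required overtaking, gaining an extra $1/\sqrt{t}$ per crossing.

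The hard part will be making the reduction from $t^{5/4}$ lattice points to the effective family quantitatively tight enough: since $e^{-cU^2}$ is much smaller than $1/U^2$, the effective count must stay uniformly bounded in $t$ (or at most polylogarithmic) so that $N_{\mathrm{eff}}\cdot e^{-cU^2}$ falls below $\varepsilon/U^2$ for $U$ large and all large $t$. Pairwise coalescence as in \eqref{eq:pairs_meet} does not by itself deliver this uniform count; combining it with \eqref{eq:key_rw_estimate_rs}, the single-ancestor renewal tail \eqref{eq:bhT}, and the density estimate \eqref{eq:density_goes_to_zero}, rather than relying on any one of them in isolation, is where the delicate work sits.
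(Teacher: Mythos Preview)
Your diagnosis of the obstacle is correct: a naive union bound over the $\sim t^{5/4}$ space-time points fails, and some extra input is needed. However, your proposed fix---reducing the crossing ancestors to an $O(1)$ effective family via coalescence---is not what the paper does, and you yourself flag that you have no concrete mechanism to make that reduction quantitative: the density statement \eqref{eq:density_goes_to_zero} carries no rate, and pairwise coalescence \eqref{eq:pairs_meet} does not by itself bound the total number of distinct trajectories arising from all starting times $r\in\{0,\dots,\lfloor t\rfloor\}$. As written, the ``delicate work'' paragraph is an admission of a gap rather than a plan to close it.

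The paper's argument avoids this reduction entirely. Instead of shrinking the number of terms in the union bound, it shrinks each individual term from $e^{-cU^2}$ down to $C_N/t^{3/2}$, so that $t^{5/4}\cdot C_N/t^{3/2}=C_N/t^{1/4}\to 0$. The device is to plant three \emph{barrier} ancestors: set $u=N^2$, $U=13u$, and in each of the three disjoint intervals $I_1^{u,t},I_2^{u,t},I_3^{u,t}$ (each of width $u\sqrt t$, lying between $-U\sqrt t$ and $0$) pick $N$ seed points. With probability at least $1-Ce^{-cN}-CN e^{-cN^4}$ (via \eqref{eq:many_die} and \eqref{eq:bondRW}), one seed in each $I_k$ has an ancestor that survives and never leaves $I_k$ up to time $t$. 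On that good event, for any $(x,r)$ in the strip the event $\{\eta^{(x,r)}_t>0\}$ forces $\eta^{(x,r)}$ to overtake each of the three barrier ancestors by at least $u\sqrt t$. Lemma \ref{cla4onevstun} bounds this triple overtaking by $C_N/t^{3/2}$: one applies the crossing estimate \eqref{eq:key_rw_estimate_rs} three times, using the joint renewal decoupling of Lemma \ref{lem_x_goes_to_y} to restart after each barrier is passed. The number three is chosen precisely so that the resulting exponent $3/2$ beats the $5/4$ from the union bound.

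So the idea you are missing is not coalescence of the many crossing ancestors, but the insertion of a fixed number of barrier ancestors, each of which contributes an independent factor $t^{-1/2}$ to the crossing probability of every single $(x,r)$.
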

\begin{proof}
Let $N$ be a large integer to be chosen later. Define
$$u = N^2,\qquad U = 13 u,\qquad t> t_0(u),\text{ where } t_0(u)\text{ is as in Lemma }\ref{cla4onevstun}.$$
For $k \in \{1,2,3\}$, let $\bar I_k^{u,t}$ be the set of $N$ points (or $N+1$ points, depending on parity) that are closest to the middle point of $I_k^{u,t}$. Define the events
\begin{align*}
&B_1 = \left\{ T^{\bar I_k^{u,t}} < \infty \text{ for some } k \in \{1,2,3\}\right\},\\[.2cm]
&B_2 = \left\{\text{there exist } k \in \{1,2,3\},\; y \in \bar I^{u,t}_k,\; s\leq t \text{ such that } \eta^y_s \neq \triangle, \;\eta^y_s \notin I^{u,t}_k\right\},\\[.2cm]
&B_3 = \left\{\begin{array}{c}\text{there exist } (x,r) \in [-U\sqrt{t}-t^{1/4}, -U\sqrt{t}] \times \{0,\ldots, \lfloor t \rfloor \},\\[.15cm] y_1 \in \bar I^{u,t}_1,\;y_2 \in \bar I^{u,t}_2,\;y_3 \in \bar I^{u,t}_3 \text{ such that }\\[.15cm]
\eta^{(x,r)}_t > 0,\;\eta^{y_i}_s \in I^{u,t}_i \text{ for each } i =1, 2, 3,\;s\leq t.\end{array}\right\}.
\end{align*}
We then have
$$\left\{ \text{there exists } (x,r) \in [-U\sqrt{t} - t^{1/4},-U\sqrt{t}]\times \{0,1,\ldots, \lfloor t\rfloor\}: \eta^{(x,r)}_t > 0 \right\} \subseteq \cup_{k=1}^3 B_k.$$

In what follows, $c$ and $C$ will denote constants that only depend on $\lambda$ and $R$, and $C_N$ will denote constants that also depend on $N$. Of course, since $u = N^2$, constants that depend on $u$ also depend on $N$. Equation \eqref{eq:behTT} implies that, for some $c>0$,
$$\P[B_1] \leq 3e^{-cN}.$$

To bound the probability of $B_2$, fix $k \in \{1,2,3\}$ and $y \in \bar I^{u,t}_k$. As long as $t$ is large enough that $N < u\sqrt{t}$, we have
$$\{\text{there exists } s \leq t:\eta^y_s \notin I^{u,t}_k\} \subseteq \left\{\sup_{0\leq s \leq t} |\eta^y_s - y| > u\sqrt{t}/2\right\}$$
and, by \eqref{eq:bondRW}, the probability of this event is less than $Ce^{-c\frac{u^2t}{4t}} + Cte^{-c\frac{u\sqrt{t}}{2}}$ for some $c, C > 0$. We thus get
$$\P[B_2] \leq 3N\left(Ce^{-c\frac{u^2t}{4t}} + Cte^{-c\frac{u\sqrt{t}}{2}}\right) = 3N\left(Ce^{-c\frac{N^4}{4}} + Cte^{-c\frac{N^2\sqrt{t}}{2}}\right).$$ 

We now turn to $B_3$. Note that there are at most $t^{5/4}$ candidates for $(x,r)$ and $N^3$ candidates for $y_1,y_2,y_3$. Using Lemma \ref{cla4onevstun}, there exists $C_N > 0$ such that 
$$\P[B_3] \leq N^3 \cdot t^{5/4} \cdot \frac{C_N}{t^{3/2}} \leq \frac{C_N\cdot N^3}{t^{1/4}}.$$

Putting these bounds together and rearranging constants, we get
\begin{align}
\nonumber &U^2 \cdot \P\left[\text{there exists } (x,r) \in [-U\sqrt{t} - t^{1/4},-U\sqrt{t}]\times \{0,1,\ldots,\lfloor t \rfloor \}: \eta^{(x,r)}_t > 0 \right]\\[.2cm]
 &\qquad\qquad\qquad\qquad\leq CN^4e^{-cN} + CN^5 e^{-cN^4} + CtN^5 e^{-cN^2\sqrt{t}} + \frac{C_NN^7}{t^\frac{1}{4}}.\label{eq:long_bound}
\end{align}
Now, given $\varepsilon > 0$, we first choose $N^*$ such that the sum of the first two terms on \eqref{eq:long_bound} is less than $\varepsilon/2$. Next, we choose $t^* > t_0((N^*)^2)$ such that, for $N^*$ and any $t > t^*$, the sum of the third and fourth terms in \eqref{eq:long_bound} is less than $\varepsilon /2$. This completes the proof of the lemma, with $U = 13(N^*)^2$.
\end{proof}

\begin{lemma}\label{lem:rell_not_move}
For any $\varepsilon > 0$ there exists $U > 0$ such that, for large enough $t$,
$$\P\left[|r_s|,|\ell_s| \leq U\sqrt{t} \text{ for all } s\leq t\right] > 1 - \frac{\varepsilon}{U^2}.$$
\end{lemma}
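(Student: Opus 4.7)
The plan is to reduce the ``bad'' event to the one bounded by Lemma \ref{lem:still_dual}, using duality, reflection symmetry, and the ancestor regeneration structure. First, the joint symmetry $x \mapsto -x$ and $1 \leftrightarrow 2$ preserves the heaviside initial configuration (up to its single boundary site, asymptotically irrelevant), giving $\P[r_s > U\sqrt{t}] = \P[\ell_s < -U\sqrt{t}]$ and $\P[\ell_s > U\sqrt{t}] = \P[r_s < -U\sqrt{t}]$. Moreover, Theorem \ref{thm:size_inter_tight} yields an $L$ with $\P[\sup_{s \leq t}|r_s - \ell_s| > L] < \varepsilon/(4U^2)$; on its complement, $r_s < -U\sqrt{t}$ forces $\ell_s < -U\sqrt{t}/2$ for large $t$. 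After absorbing constants into $U$, it suffices to bound $\P[r_s > U\sqrt{t} \text{ for some } s \leq t]$.

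Next, I would reduce to integer times. For $s \in [n, n+1]$, the event $r_s > U\sqrt{t}$ together with $r_n \leq U\sqrt{t} - K$ requires an infection path of spatial extent at least $K$ in unit time. Lemma \ref{lem:no_faster} restricts the possible starting positions to a band of size $O(t)$ with high probability, so \eqref{eq:behM} and a union bound over such positions and over integer $n \leq \lfloor t \rfloor$ yield total error $O(t^2 e^{-cK})$, negligible for $K = C\log t$. For each integer $n$, the duality \eqref{eq:duality_equation_multi} gives $\{r_n > U\sqrt{t} - K\} = \{\exists x > U\sqrt{t} - K : \eta^x_n(\hat H_{[0,n]}) \leq 0\}$; since $\hat H_{[0,n]}$ is distributionally a forward Harris system invariant under $x \mapsto -x$, this equals in distribution $\{\exists y < -(U\sqrt{t} - K): \eta^y_n \geq 0\}$ in a forward Harris system.

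The core step is to match this supremum-over-$n$ event to the event of Lemma \ref{lem:still_dual}. I would proceed in two parts. First, a reflection/first-passage argument for the ancestor (which behaves like a random walk by Proposition \ref{prop:rmn}): for each fixed $y$, $\P[\exists n \leq t : \eta^y_n \geq 0] \lesssim \P[\eta^y_t \geq 0]$ up to an order-one factor. Second, for each $y \leq -(U\sqrt{t} - K)$ with $\eta^y_t \geq 0$, the trajectory must cross the strip $[-U\sqrt{t} - t^{1/4}, -U\sqrt{t}]$; by the renewal structure (Proposition \ref{prop:rmn}) and the tail bounds \eqref{eq:bhT}, \eqref{eq:bondRW}, there exists a regeneration time $\tau \leq t$ near this crossing with $\eta^y_\tau$ in (a slight widening of) the strip. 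By \eqref{eq:prop_ancestry_inf}, the trajectory thereafter coincides with $\eta^{(\eta^y_\tau, \tau)}$, so $\eta^y_t > 0$ implies $\eta^{(x, r)}_t > 0$ for $r = \lceil \tau \rceil$ and $x = \eta^y_r$, after adjusting $\tau$ to an integer time at the cost of an $O(1)$ displacement controlled by \eqref{eq:bondRW}. Applying Lemma \ref{lem:still_dual} (with $U$ replaced by $U/2$ to absorb the $K$-shift) provides the desired $\varepsilon/U^2$ bound.

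The main obstacle is the clean execution of this final step: combining the first-passage/reflection argument with the renewal-based identification of an integer time and a strip position, while respecting the strict ``$\eta^{(x,r)}_t > 0$ at final time $t$'' condition of Lemma \ref{lem:still_dual}. In particular, care is needed to ensure that the loss from replacing a crossing time by a nearby integer regeneration time, and from passing from ``$\eta^y_s \geq 0$ for some $s \leq t$'' to ``$\eta^y_t \geq 0$'', introduce only $O(1)$-factor losses that can be absorbed into the constants.
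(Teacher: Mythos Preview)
There are two gaps. The minor one: Theorem \ref{thm:size_inter_tight} gives $\P[|r_t-\ell_t|>L]<\varepsilon$ only at each fixed $t$, not for $\sup_{s\le t}|r_s-\ell_s|$, so your reduction of $\{r_s<-U\sqrt t\}$ to $\{\ell_s<-U\sqrt t/2\}$ is unjustified. The paper handles this differently: once $\inf_{s\le t}\ell_s \ge -U\sqrt t - t^{1/4}$ is established, the event $\{\inf_{s\le t} r_s < -2U\sqrt t\}$ forces $\xi^h_s\equiv 0$ on an interval of length of order $U\sqrt t$ at some $s\le t$, which has vanishing probability by an argument as in Lemma \ref{lem:desc_bar_sides}.

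The more serious gap is in the dualization. You apply \eqref{eq:duality_equation_multi} separately at each integer $n$, so the resulting ancestor events $\{\exists x:\eta^x_n(\hat H_{[0,n]})\le 0\}$ live in \emph{different} reversed Harris systems $\hat H_{[0,n]}$. Their union over $n$ is not an event in a single forward system, and your reflection and renewal arguments (steps 4--5) cannot be run jointly there; a plain union bound over $n$ costs a factor $t$ you cannot afford. The paper avoids all of this by bounding $\inf_s\ell_s$ rather than $\sup_s r_s$ and by dualizing \emph{once}, at the fixed horizon $t$. The point is that $\xi^h_r(x)=2$ is, via the joint construction in Subsection \ref{ss:mcp}, the event $\eta^{(x,\,t-r)}_t>0$ in the single reversed system $\hat H_{[0,t]}$; as $r$ ranges over integers in $[0,t]$ so does $t-r$, and since $\hat H_{[0,t]}$ has the law of a forward Harris system, the union over $(x,r)$ in the strip $[-U\sqrt t - t^{1/4},-U\sqrt t]$ times integer times is exactly the event of Lemma \ref{lem:still_dual}. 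A short induction over integer times together with a unit-time Poisson bound then shows that on the complement of that event $\ell_s$ never drops below $-U\sqrt t - t^{1/4}$. No reflection principle and no regeneration-in-the-strip argument are needed.
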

\begin{proof}
Given $\varepsilon > 0$, we will find $U > 0$ such that
\begin{equation}\P\left[\inf_{0\leq s \leq t} \ell_s < -U\sqrt{t} - t^{1/4} \right] < \frac{\varepsilon}{U^2}  \text{ if $t$ is large enough} \label{eq:U_2U}\end{equation}
and
\begin{equation}
\label{eq:U_2U2}
\P\left[\inf_{0 \leq s \leq t}\ell_s \geq - U\sqrt{t} - t^{1/4},\; \inf_{0\leq s \leq t} r_s < -2U\sqrt{t}  \right] \xrightarrow{t \to \infty} 0;
\end{equation}
the statement of the lemma clearly follows from these statements and symmetry.

For \eqref{eq:U_2U}, we remark that, using the joint construction of the multitype contact process and the ancestor processes,  \eqref{eq:still_dual} can be rewritten as
\begin{equation}
\P\left[\text{there exists } (x, r) \in [-U\sqrt{t} - t^{1/4}, -U\sqrt{t}] \times \{0,1,\ldots, \lfloor t \rfloor\}:\xi^h_r(x) = 2 \right] < \frac{\varepsilon}{U^2}.
\end{equation}
Letting $A_t$ be the event that appears in the above probability, we also have
\begin{align*}
&\P\left[A_t^c \cap \left\{\inf_{0\leq s\leq t} \ell_s < -U\sqrt{t} - t^{1/4} \right\}\right] \\&\leq \sum_{r = 0}^{\lfloor t \rfloor} \P\left[[-U\sqrt{t},\infty) \times \{r\} \;\leftrightarrow \; (-\infty,-U\sqrt{t} - t^{1/4}]\times [r,r+1]\right] \leq te^{-ct^{1/4}}\end{align*}
by a comparison with a Poisson random variable. \eqref{eq:U_2U} is thus proved.

For \eqref{eq:U_2U2}, note that
$$\begin{aligned}&\P\left[\inf_{0 \leq s \leq t}\ell_s \geq -U\sqrt{t} - t^{1/4},\; \inf_{0\leq s \leq t} r_s < -2U\sqrt{t}  \right] \\[.2cm]&\leq \P \left[\text{there exists } s \in [0,t] \text{ such that } \xi^h_s \equiv 0 \text{ on } [-2U\sqrt{t},\;-U\sqrt{t} - t^{1/4}\right].\end{aligned}$$
The probability on the right-hand side can be bounded above similarly to how we proceeded in Lemma \ref{lem:desc_bar_sides}, so that \eqref{eq:U_2U2} follows.
\end{proof}

\begin{proofof}{\textit{Lemma \ref{lem:inter_not_move}}}
We recall the definition of $i_s^t$ in \eqref{eq:def_ist}. Given $\varepsilon > 0$, using the previous lemma  we choose $U$ so that
$$\P\left[\sup_{0\leq s \leq t} |i_s| \leq \frac{U}{2}\sqrt{t} \right] > 1 - \frac{\varepsilon}{2U^2}. $$ 
Using Theorem  \ref{thm:interface_regeneration}, we choose $K$ so that, for any $r > 0$,
$$\P\left[|i_s - i^r_s| \leq K \text{ for all } s \geq r\right] > 1 - \frac{\varepsilon}{2U^2}.$$
Then, if $t$ is large enough that $U\sqrt{t} > 2K$ we have, for any $r > 0$,
$$\begin{aligned}
\P\left[\sup_{r \leq s \leq r+t} |i_s - i_r| > U\sqrt{t}\right] &\leq \P\left[\sup_{r \leq s \leq r+t} |i_s - i^r_s| >  \frac{U}{2}\sqrt{t} \right] + \P\left[\sup_{r \leq s \leq r+t} |i^r_s - i_r| > \frac{U}{2}\sqrt{t} \right]\\[.2cm]
&\leq  \P\left[\sup_{r \leq s \leq r+t} |i_s - i^r_s| >K \right] + \P\left[\sup_{0 \leq s \leq t} |i_s| > \frac{U}{2}\sqrt{t} \right] < \frac{\varepsilon}{U^2}.
\end{aligned}$$
\end{proofof}

\section{Interface regeneration}\label{ss:proof_reg}
In this section we will prove Theorem \ref{thm:interface_regeneration}. We will often consider multitype contact processes with different initial configurations simultaneously. When we do so, we always assume that all these processes are constructed on the same probability space, using a single Harris system $H$.

We start defining some classes of subsets of the space of configurations $\{0,1,2\}^\Z$. Recall the definition of $\Omega \subset \{0,1,2\}^\Z$ in \eqref{eq:def_of_Omega}. Define 
\begin{align}\label{eq:def_gamma}
\Gamma_{S,L} = \left\{\begin{array}{ll}\xi \in \Omega:& \text{ there exist } a < b \text{ with } b-a \leq L,\;r(\xi),\ell(\xi) \in (a,b),\\[.2cm]& \xi \equiv 1 \text{ on } [a-S,a] \text{ and } \xi \equiv 2 \text{ on } [b,b+S]\end{array}\right\},\; S,L > 0.
\end{align}
The homogeneously and fully occupied intervals $[a-S, a]$, $[b, b+S]$ that appear in the above definition will be referred to as ``isolation segments''. The reason is that we think of them as isolating the interface (which is contained in $(a,b)$) from the ``outside'' $[a-S,b+S]^c$, so that, if $S$ is large, we can hope that the configuration in the outside never has any effect on the evolution of the interface.

Our second class of configurations will depend on a preliminary definition. Given $\xi_0 \in \Omega$, let $$\tilde \xi_0 = \mathds{1}_{(-\infty, \lfloor i(\xi_0)\rfloor]} + 2 \cdot \mathds{1}_{(\lfloor i(\xi_0)\rfloor,\infty)}.$$
Also let $(\xi_t)$ and $(\tilde \xi_t)$ be contact processes started from $\xi_0$ and $\tilde \xi_0$, respectively (constructed with the same Harris system). We now let
\begin{equation}\Omega_{\varepsilon,K} = \left\{\xi_0 \in \Omega: \P\left[|i(\xi_t) - i(\tilde \xi_t)| < K \text{ for all } t\right]> 1-\varepsilon \right\}.\label{eq:def_omega_eps_K}\end{equation}
We will separately prove the following two propositions:
\begin{proposition}\textbf{(Large isolation segments allow for regeneration).}\label{prop:omega_eps_K}
For any $\varepsilon > 0$ there exists $S > 0$ such that the following holds. For any $L > 0$ there exists $K = K(\varepsilon, S, L) >0$ such that $\Gamma_{S,L} \subseteq \Omega_{\varepsilon,K}$.
\end{proposition}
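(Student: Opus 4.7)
The plan is to extend the descendancy-barrier argument that underlies Lemma~\ref{lem:couple_ones} to compare the multitype processes $\xi_t$ (issued from $\xi_0$) and $\tilde\xi_t$ (issued from $\tilde\xi_0$), both constructed from the same Harris system $H$. A preliminary observation will be crucial: since $\ell(\xi_0)\in(a,b)$ forces $\xi_0$ to have no $2$'s on $(-\infty,a]$, and $\lfloor i(\xi_0)\rfloor\geq a$ forces $\tilde\xi_0\equiv 1$ on $(-\infty,a]$, both initial configurations reduce to label $1$ on the left isolation segment $[a-S,a]$; symmetrically, both reduce to label $2$ on $[b,b+S]$. Thus $\xi_0$ and $\tilde\xi_0$ agree on the isolation segments, and differ only on $(a,b)$ and possibly outside $[a-S,b+S]$.

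First I would fix $\varepsilon>0$ and choose $S=S(\varepsilon)$ large enough that the event from \cite{ampv} used in the proof of Lemma~\ref{lem:couple_ones} holds with probability at least $1-\varepsilon$: there exist sites $x^-\in[a-S,a]$ and $x^+\in[b,b+S]$ such that within each cone $\mathcal{C}_{x^\pm}=\{(y,t):y\in[x^\pm-\bar\beta t,x^\pm+\bar\beta t]\}$, every point reachable from $\Z\times\{0\}$ by an $H$-infection path is already reachable from $(x^\pm,0)$. Since $\xi_0(x^-)=\tilde\xi_0(x^-)=1$ and $\xi_0(x^+)=\tilde\xi_0(x^+)=2$, the barrier couples the two processes at the colorblind level: within each cone both $\mathds{1}\{\xi_t\neq 0\}$ and $\mathds{1}\{\tilde\xi_t\neq 0\}$ reduce to the occupancy of the one-type contact process issued from a single seed at $x^\pm$.

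The main technical step is to upgrade this colorblind coincidence to agreement of the multitype labels on the cones, namely $\xi_t(y)=\tilde\xi_t(y)$ for every $(y,t)\in\mathcal{C}_{x^-}\cup\mathcal{C}_{x^+}$. I plan to use the best-path characterization \eqref{eq:best_path} together with the non-overwriting rule encoded in \eqref{eq:rule_multi_arrow}. The heuristic is that the unique winning infection path which determines the label at $(y,t)$ must, by the barrier, coincide inside the cone with a path emanating from $(x^\pm,0)$; any $2$-label trying to penetrate the left cone would originate from a $2$-site of either initial configuration (all of which lie in $(a,\infty)$), and its propagation is blocked by the densely $1$-labelled region established from the left isolation segment together with the multitype rule that a $1$-site cannot be overwritten by a $2$-birth. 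Making this precise for \emph{every} cone point and \emph{every} time is the main obstacle, as it is exactly what distinguishes the multitype coupling from the single-type statement of Lemma~\ref{lem:couple_ones}.

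Finally, I will combine the above with a crude estimate in the ``gap'' phase. Let $T_0=\lceil(x^+-x^-)/(2\bar\beta)\rceil=O(L+S)$ be the first time at which the two cones meet. For $t\leq T_0$ the gap between the cones may contain the interface; a standard speed-of-propagation estimate derived from \eqref{eq:behM} applied to $r_t,\ell_t$ of both processes, combined with $|i(\xi_0)-i(\tilde\xi_0)|\leq L/2$, yields $|i(\xi_t)-i(\tilde\xi_t)|\leq 2\kappa T_0+L=O(L+S)$ uniformly on $[0,T_0]$ on an event of probability at least $1-\varepsilon$ (after summing exponential tails over a discretization of the time interval). For $t\geq T_0$ the cones cover a neighborhood of the interface, and the multitype agreement inside the cones forces $r_t$ and $\ell_t$ to coincide in the two processes up to $O(1)$. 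Setting $K=K(\varepsilon,S,L)$ of order $L+S$ then yields $\Gamma_{S,L}\subseteq\Omega_{\varepsilon,K}$, as required.
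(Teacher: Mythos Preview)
Your step~2 contains a genuine gap. The descendancy barrier from $x^-$ only guarantees that, inside $\mathcal{C}_{x^-}$, the occupied set for the process started from full occupancy coincides with the descendant set of $(x^-,0)$; it says nothing about \emph{labels}. The label at $(y,t)$ is determined by the best path $\upgamma^*_{\{\cdot_0\neq 0\},y,t}$ of \eqref{eq:good_gamma}, and nothing in the barrier forces this path to start at $x^-$ (or even to stay in the cone). Since $\xi_0$ and $\tilde\xi_0$ differ on $(a,b)$ both in labels and possibly in occupancy ($\xi_0$ may carry $0$'s there), the two best paths can start at sites carrying different labels, so $\xi_t(y)\neq\tilde\xi_t(y)$ is perfectly possible inside the cone. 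Your heuristic that a $2$ is ``blocked by the densely $1$-labelled region'' is not a proof: a $2$ sitting at some $z\in(a,\lfloor i(\xi_0)\rfloor]$ in $\xi_0$ (which is possible when $\ell(\xi_0)<r(\xi_0)$, and where $\tilde\xi_0$ carries a $1$) can propagate leftward into $\mathcal{C}_{x^-}$ via arrows landing on sites that happen to be empty in the $\xi$-process at the right moments. The barrier offers no obstruction to this.

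The paper separates the two discrepancies --- outside $[a,b]$ versus inside $(a,b)$ --- and treats them with different tools. For the outside, Corollary~\ref{cor:all_tog} shows $r(\xi_t)=r(\hat\xi_t)$ and $\ell(\xi_t)=\ell(\hat\xi_t)$, where $\hat\xi_0$ equals $\xi_0$ on $(a,b)$ and is all $1$'s/$2$'s outside; note that even here the upgrade from colourblind to label agreement (Claim~1 in Lemma~\ref{lem:couple_left}) needs the event $E_3$, which comes from Lemma~\ref{lem:no_faster} and hence from the ancestor bound \eqref{eq:bondRW}, not from barriers alone. For the inside, the paper invokes Lemma~\ref{prop:almost_all_couples_all}: since $\hat\xi_0$ and $\tilde\xi_0$ agree and are nonzero on $\Z\setminus(a,b)$, duality \eqref{eq:duality_equation_multi} gives $\hat\xi_t=\tilde\xi_t$ as soon as no ancestor $\eta^x_t$ lands in $(a,b)$, which holds for all $t\geq t_0(\varepsilon,L)$ by the density decay \eqref{eq:density_goes_to_zero}. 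This second step --- absorbing the finite-window discrepancy via ancestor-density decay --- is the essential missing ingredient in your plan, and no barrier argument substitutes for it. Your step~4 is also shaky as written: \eqref{eq:behM} bounds the speed of infection paths, not of $r_t$ or $\ell_t$ (which can jump via deaths); the paper gets the needed linear envelope on $i(\xi_t)$ as a byproduct of Corollary~\ref{cor:all_tog}.
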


\begin{proposition}\textbf{(Large isolation segments are found not too far).}\label{prop:gamma_sl}
For any $\varepsilon > 0$ and $S > 0$ there exists $L > 0$ such that, for any $t \geq 0$,
$$\mathbb{P}\left[\xi^h_t \in \Gamma_{S,L}\right] > 1-\varepsilon.$$
\end{proposition}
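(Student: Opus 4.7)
The plan is to use Theorem~\ref{thm:size_inter_tight} to localize the interface to a window of bounded length, then show that just outside this window, on each side, there is (with high probability) an ``isolation segment'' of length $S+1$ of the appropriate color.

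First, by Theorem~\ref{thm:size_inter_tight}, pick $L_0 = L_0(\varepsilon)$ with $\P[|r_t - \ell_t| > L_0] < \varepsilon/3$ for all $t$. On the good event, both $r_t$ and $\ell_t$ lie in a common window of length $L_0$. Writing $r^*_t := \max(r_t, \ell_t)$ and $\ell^*_t := \min(r_t, \ell_t)$, it suffices to show that, with probability at least $1 - \varepsilon/3$ each, there exist
\begin{itemize}
\item[] $a \in (\ell^*_t - L_1,\; \ell^*_t]$ with $\xi^h_t \equiv 1$ on $[a-S, a]$, and
\item[] $b \in [r^*_t,\; r^*_t + L_1)$ with $\xi^h_t \equiv 2$ on $[b, b+S]$,
\end{itemize}
for some $L_1 = L_1(\varepsilon, S)$. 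On the intersection of the three events, $\xi^h_t \in \Gamma_{S,L}$ with $L = L_0 + 2L_1 + 2S$.

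The two statements are symmetric, so I concentrate on the all-$2$ segment. Since $r_t$ is the rightmost $1$, for every $x > r_t$ we have $\xi^h_t(x) \in \{0, 2\}$; hence, for $b > r_t$, the event $\{\xi^h_t \equiv 2 \text{ on } [b, b+S]\}$ coincides with $\{\hat\zeta^h_t \equiv 1 \text{ on } [b, b+S]\}$, where $\hat\zeta^h_t := \mathds{1}_{\{\xi^h_t \ne 0\}}$. By \eqref{eq:colorblind}, $\hat\zeta^h_t$ has the same distribution as the one-type contact process $\zeta^\Z_t$ started from full occupancy. Because $\lambda > \lambda_c(\Z, R)$, $(\zeta^\Z_t)$ is stochastically decreasing in $t$ and converges to the upper invariant measure $\mu$, which is translation invariant with positive density $\rho = \P[T^0 = \infty]$ of sites in state $1$. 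Ergodicity of $\mu$ together with finite-window convergence of $\zeta^\Z_t$ to $\mu$ then yields, for $L_1$ and $t$ large, the existence of an all-$1$ block of length $S+1$ in any \emph{fixed} window of length $L_1$ with probability $> 1 - \varepsilon/10$.

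The remaining issue, and the main obstacle, is that the window of interest $[r^*_t, r^*_t + L_1]$ is a random shift of $[0, L_1]$. My plan to resolve it is to decouple $r^*_t$ from the configuration strictly to its right. For each candidate $z$, the event $\{r^*_t = z\}$ is essentially determined by the ancestors (in the reversed Harris system) of sites close to $z$, while the presence of an all-$1$ block of length $S+1$ in $(z, z+L_1]$ is a local property of $\zeta^\Z_t$ in that window. By the coalescence estimate \eqref{eq:pairs_meet}, the two families of ancestors agree (or both die) except on a set of probability $O(L_1/\sqrt{t})$, which quantifies the decoupling. Summing over $z$ and using the translation invariance of $\zeta^\Z_t$ to bound each conditional term by a $z$-independent small quantity then yields the claim for $t$ large; for small $t$ we use that $\xi^h_0 \in \Gamma_{S, 3}$ trivially together with a straightforward continuity argument (only finitely many arrivals of the Poisson processes affect the relevant finite window before some deterministic time, with high probability). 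Implementing the decoupling rigorously --- controlling the correlation between the location of $r^*_t$ and the configuration in its immediate right-neighborhood --- is the step I expect to require the most work.
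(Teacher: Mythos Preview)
Your reduction to a one-type question is correct: to the right of $r_t$ every non-zero site is of type~2, so finding an all-$2$ block there amounts to finding an all-occupied block for $\zeta^\Z_t$. The gap is in the decoupling step.

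The claim that $\{r^*_t = z\}$ is ``essentially determined by the ancestors of sites close to $z$'' is not right: knowing $r_t = z$ requires $\xi^h_t(x) \neq 1$ for \emph{every} $x > z$, which constrains the ancestors of all sites in your target window $(z, z+L_1+S]$, not just those near $z$. More to the point, the coalescence estimate \eqref{eq:pairs_meet} says that nearby ancestors \emph{merge}, which produces positive correlation between the values at those sites, not the approximate independence you need. No FKG-type inequality is available either, since $\{r^*_t = z\}$ is not monotone. As you yourself flag, this is the crux, and I do not see how to carry it out along the lines you sketch; also note that you cannot invoke Corollary~\ref{cor:interface_motion_tight} here, as it is a consequence of the theorem whose proof this proposition feeds into.

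The paper avoids the random-window issue altogether by a two-time argument. First (Lemmas~\ref{lem:order_many} and~\ref{lem:gamma_pi_good}) it shows that at time $t-1$ there are, with high probability, at least $k$ type-$2$ sites within distance $L$ to the right of $M_{t-1}$ (and symmetrically on the left). The mechanism is: tightness of the interface \emph{size} forces tightness of the distance to the $k$-th nearest occupied site, because if the first $k$ occupied sites to the right of $m_{t-1}$ were spread over a huge interval, one could kill those $k$ sites and a few neighbours in unit time with uniformly positive probability and create an arbitrarily wide negative interface at time $t$, contradicting \eqref{eq:interface_tightness}. Second (Lemma~\ref{lem:after_one_second}), once $k$ well-separated type-$2$ seeds are fixed at time $t-1$, each one fills a segment of length $S$ by time $t$ with probability at least some $\delta_S>0$, and these events are \emph{genuinely independent} because they are measurable with respect to the Harris system in disjoint spatial boxes during $[t-1,t]$. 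No decoupling from the interface position is needed: the seed locations were frozen at time $t-1$, and the randomness used to grow the blocks is fresh.
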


\begin{proofof}{\textit{Theorem} \ref{thm:interface_regeneration}.}
Fix $\varepsilon > 0$. Choose $S = S(\varepsilon)$ as in Proposition \ref{prop:omega_eps_K}, then choose $L = L(\varepsilon, S)$ as in Proposition \ref{prop:gamma_sl}, and finally choose $K = K(\varepsilon,S,L)$ as in Proposition \ref{prop:omega_eps_K}. Now, for any $t \geq 0$ we have
\begin{equation}
\label{eq:inter_reg_rew}\P\left[\xi^h_t \in \Omega_{\varepsilon, K} \right] \stackrel{}{\geq} \P \left[\xi^h_t \in \Gamma_{S,L}\right] > 1-\varepsilon.
\end{equation}
Now, for any $s \geq 0$ we have
\begin{align*}
\P\left[\sup_{t \geq s} |i^s_t - i_t| > K \right] \leq \P\left[\xi^h_s \notin \Omega_{\varepsilon, K} \right] + \P\left[\left.\sup_{t \geq s} |i^s_t - i_t| > K\;\right|\; \xi^h_s \in \Omega_{\varepsilon, K}\right] < 2\varepsilon.
\end{align*}
\end{proofof}

\subsection{Proof of Proposition \ref{prop:omega_eps_K}}

\begin{lemma}
\label{prop:almost_all_couples_all}
For any $\varepsilon > 0$ and $L > 0$ there exists $t_0 > 0$ such that the following holds. If $I$ is an interval of length at most $L$ and $\hat \xi_0,\;\doublehat{\xi}_0 \in \{0,1,2\}^{\Z}$ are such that $\hat \xi_0(x) = \doublehat{\xi}_0(x) \neq 0$ for all $x \in \Z\backslash I$, then
$$\mathbb{P}\left[\hat\xi_t = \doublehat{\xi}_t \text{ for all } t \geq t_0\right] > 1- \varepsilon.$$
\end{lemma}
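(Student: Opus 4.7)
The first observation is that $\hat\xi_t$ and $\doublehat\xi_t$ are constructed from the same Harris system $H$, so the event $\{\hat\xi_{t_0}=\doublehat\xi_{t_0}\}$ is contained in $\{\hat\xi_t=\doublehat\xi_t \text{ for all } t\geq t_0\}$: once the configurations coincide, they evolve identically thereafter. The task therefore reduces to showing that, for some $t_0 = t_0(L,\varepsilon)$, we have $\P[\hat\xi_{t_0}=\doublehat\xi_{t_0}] > 1-\varepsilon$.

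To control this single-time probability, I would appeal to the primal/dual coupling developed in Subsection~\ref{ss:mcp}. Fix $t_0>0$ and build the ancestor processes $(\eta^x_s)_{0 \leq s \leq t_0}$, $x \in \Z$, from the reversed Harris system $\hat H_{[0,t_0]}$. The key claim is that
$$\{\hat\xi_{t_0}\neq \doublehat\xi_{t_0}\}\;\subseteq\;\{\{\eta^x_{t_0}: x\in\Z\}\cap I\neq\varnothing\}.$$
Indeed, for any $x\in\Z$: if $\eta^x_{t_0}=\triangle$ then \eqref{eq:dual_eq_dies} forces $\hat\xi_{t_0}(x)=\doublehat\xi_{t_0}(x)=0$; and if $\eta^x_{t_0}=y\notin I$, then by hypothesis $\hat\xi_0(y)=\doublehat\xi_0(y)\neq 0$, so \eqref{eq:dual_eq_surv} applied to each process gives $\hat\xi_{t_0}(x)=\hat\xi_0(y)=\doublehat\xi_0(y)=\doublehat\xi_{t_0}(x)$. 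The only way to spoil coincidence at $(x,t_0)$ is therefore to have the ancestor land in $I$.

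To finish, I would invoke \eqref{eq:density_goes_to_zero} to bound the probability on the right-hand side. Since $I$ is an interval of length at most $L$, it intersects $\Z$ in at most $\lfloor L\rfloor+1$ points; and because the Harris system is translation invariant (and its law is invariant under the time-reversal used to pass from $H$ to $\hat H_{[0,t_0]}$), the probability $\P[\{\eta^x_{t_0}:x\in\Z\}\cap I\neq\varnothing]$ depends on $I$ only through its cardinality. Applying \eqref{eq:density_goes_to_zero} with $I_0=\{0,1,\dots,\lfloor L\rfloor\}$ gives a uniform bound that tends to $0$ as $t_0\to\infty$, and choosing $t_0$ large enough to make it smaller than $\varepsilon$ completes the proof.

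The only real content beyond bookkeeping is the duality step in the second paragraph, and that reduces to the two already-proved identities \eqref{eq:dual_eq_dies} and \eqref{eq:dual_eq_surv}; I do not expect any genuine obstacle here, since the tricky point --- that the set $\{\xi_0\neq 0\}$ may differ between the two initial configurations --- is precisely circumvented by requiring the ancestor location $y$ to lie outside $I$, where the two sets agree.
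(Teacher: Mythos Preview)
Your proposal is correct and follows essentially the same approach as the paper: reduce to a single time $t_0$ via the shared Harris system, use duality to show that disagreement at time $t_0$ forces some ancestor to land in $I$, and conclude via \eqref{eq:density_goes_to_zero}. Your citation of \eqref{eq:dual_eq_dies} and \eqref{eq:dual_eq_surv} is in fact slightly more precise than the paper's appeal to \eqref{eq:duality_equation_multi}, since the latter assumes fully occupied initial configurations whereas the former two do not; this is exactly the right way to handle the potentially empty sites inside $I$.
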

\begin{proof}
Since $(\hat \xi_t)$ and $(\doublehat{\xi}_t)$ are constructed from the same Harris system $H$, it suffices to find $t_0$ such that 
$$\mathbb{P}\left[\hat\xi_{t_0} = \doublehat{\xi}_{t_0}\right] > 1- \varepsilon.$$
For a fixed $t_0 > 0$, consider the system of first ancestor processes $((\eta^x_t)_{0 \leq t \leq t_0}: x \in \Z)$ constructed from the time-reversed Harris system $\hat H_{[0,t_0]}$. Since $\hat \xi_0 \equiv \doublehat{\xi}_0$ on $\Z \backslash I$, we have
\begin{align*}
\P\left[\hat \xi_{t_0} = \doublehat{\xi}_{t_0} \right] &\stackrel{\eqref{eq:duality_equation_multi}}{\geq} \P\left[\hat\xi_0(\eta^x_{t_0}) = \doublehat{\xi}_0(\eta^x_{t_0}) \neq 0 \text{ for all } x \in \Z \text{ with } \eta^x_{t_0} \neq \triangle\right] \\&\geq \P\left[\eta^x_{t_0} \notin I \text{ for all } x \in \Z\right] \geq 1-(\#I)\cdot \P\left[0 \in \{\eta^x_{t_0}:x\in\Z\}\right].
\end{align*}
The result now follows from taking $t_0$ large enough, depending on $\varepsilon$ and $L$, by \eqref{eq:density_goes_to_zero}.
\end{proof}


\begin{lemma}
\label{lem:couple_left}
For any $\varepsilon > 0$ there exists $S_0 > 0$ such that the following holds for any $S \ge S_0$. Assume $\xi_0$ satisfies:
\begin{align}
\label{eq:all_1s}&\xi_0 \equiv 2 \text{ on $[0,S]$};\\
\label{eq:no_2s}&r(\xi_0) <  0.
\end{align}
Let $(\xi'_t)_{t\ge 0}$ be the process started from
\begin{equation}\label{eq:def_of_xi_prime} \xi'_0(x) = \mathds{1}_{(-\infty,0)}(x)\cdot \xi_0(x) + 2 \cdot\mathds{1}_{[0,\infty)}(x).
\end{equation}
Then, with probability larger than $1-\varepsilon$ we have
\begin{align}
\label{eq:xi_eq_xip}\text{for any } t \geq 0,\quad &\xi_t \equiv \xi'_t \text{ on } (-\infty, S/2 + \beta t], \\ 
\label{eq:xi_coinc_Gamma}&\ell(\xi_t) = \ell(\xi'_t),\; r(\xi_t) = r(\xi'_t)\text{ and }\\ 
&\label{eq:xi_where_2s}\ell(\xi_t),\;r(\xi_t) < S/2 + \beta t.
\end{align}
\end{lemma}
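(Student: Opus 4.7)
The plan is to combine monotonicity of the multitype process, the one-type coupling of Lemma \ref{lem:couple_ones} applied to the alive statuses, the speed bound of Lemma \ref{lem:no_faster}, and the descendancy-barrier construction of \cite{ampv} to lift the alive-status coupling to a full type coupling. I treat $\beta$ as the constant $\bar\beta$ appearing in Lemmas \ref{lem:couple_ones} and \ref{lem:desc_bar_sides}.

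First, $\xi_0$ and $\xi'_0$ coincide on $(-\infty,S]$ (they agree on $(-\infty,0)$ by construction, and both are $\equiv 2$ on $[0,S]$). Since $r(\xi_0)<0$, on $(S,\infty)$ one has $\xi_0\in\{0,2\}$ while $\xi'_0\equiv 2$, so $\{\xi_0=1\}=\{\xi'_0=1\}$ and $\{\xi_0=2\}\subseteq\{\xi'_0=2\}$; this is $\xi_0\prec\xi'_0$ in the partial order of Claim \eqref{eq:attract_multi}. Monotonicity propagates to give $\{\xi_t=2\}\subseteq\{\xi'_t=2\}$, $\{\xi_t=1\}\supseteq\{\xi'_t=1\}$ and hence $\ell(\xi_t)\geq\ell(\xi'_t)$, $r(\xi_t)\geq r(\xi'_t)$ for all $t$. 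By \eqref{eq:colorblind}, $\zeta_t=\mathds{1}_{\{\xi_t\neq 0\}}$ and $\zeta'_t=\mathds{1}_{\{\xi'_t\neq 0\}}$ are one-type contact processes on the same Harris system; they agree on $(-\infty,S]$, $\zeta'_0\equiv 1$ on $(S/2,\infty)$, and $\zeta_0\equiv 1$ on the right buffer $(S/2,S]$. Applying Lemma \ref{lem:couple_ones} (together with its remark for $a=-\infty$) with $a=-\infty$, $b=S/2$ and buffer-length $S/2$, assuming $S\geq 2S_0(\varepsilon/5)$, gives $\zeta_t\equiv\zeta'_t$ on $(-\infty,S/2+\beta t]$ for every $t$, with probability $\geq 1-\varepsilon/5$.

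For the speed bounds, Lemma \ref{lem:no_faster} applies directly to $\xi'$: choosing $\sigma\in(0,\beta)$ and $S$ large enough that $S_0(\varepsilon/5,\sigma)\leq S/2$ gives $\ell(\xi'_t),r(\xi'_t)<S/2+\beta t$ with probability $\geq 1-\varepsilon/5$. For $r(\xi_t)$, only the absence of $1$'s in $[0,\infty)$ at time $0$ is used by the $r$-half of that proof (via the implication $\xi_0(\eta^x_t)=1\Rightarrow\eta^x_t\leq 0$), and this holds for $\xi_0$ by \eqref{eq:no_2s}, so the identical argument delivers $r(\xi_t)<S/2+\beta t$ with probability $\geq 1-\varepsilon/5$.

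The main obstacle is upgrading the alive-coupling of the previous step to the pointwise equality \eqref{eq:xi_eq_xip}. On the intersection of the events above, and for any $x\leq S/2+\beta t$, alive-coupling gives $\xi_t(x)=0\Leftrightarrow\xi'_t(x)=0$ and the monotonicity of the first paragraph excludes every disagreement except the asymmetric mismatch $\xi_t(x)=1,\ \xi'_t(x)=2$. To exclude this case, I would unpack the descendancy-barrier event underlying the proof of Lemma \ref{lem:couple_ones}: with probability $\geq 1-\varepsilon/5$ there is some $x^*\in[S/2,S]$ such that in $\mathcal{C}_{x^*}=\{(y,s):|y-x^*|\leq\beta s\}$ every vertex reachable from $\Z\times\{0\}$ is already reachable from $(x^*,0)$. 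In the hypothetical mismatch, the type-$2$-carrying path $\upgamma^*_{\{\xi'_0\neq 0\},x,t}$ of \eqref{eq:claim_about_multi_constr} starts at some $z'$ with $\xi'_0(z')=2$. If $z'\leq S$ then $\xi_0(z')=2$ as well, and because the admissibility condition defining $\upgamma^*$ is strictly weaker when the starting set shrinks from $\{\xi'_0\neq 0\}$ to $\{\xi_0\neq 0\}\subseteq\{\xi'_0\neq 0\}$, this same path is the unique $\upgamma^*$ for $\xi$ and uniqueness forces $\xi_t(x)=2$. If $z'>S$ the path must cross $\mathcal{C}_{x^*}$, and the barrier lets us replace its segment inside $\mathcal{C}_{x^*}$ by a route from $(x^*,0)$, producing an infection path for $\xi$ whose starting site $x^*$ has $\xi_0(x^*)=2$, leading again to $\xi_t(x)=2$. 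Either outcome contradicts $\xi_t(x)=1$, so the mismatch is ruled out and \eqref{eq:xi_eq_xip} holds. Combining this pointwise equality with the bounds $\ell(\xi'_t),r(\xi'_t)<S/2+\beta t$ already established gives $\ell(\xi_t)=\ell(\xi'_t)$, $r(\xi_t)=r(\xi'_t)$ and the missing upper bound $\ell(\xi_t)<S/2+\beta t$, completing \eqref{eq:xi_coinc_Gamma} and \eqref{eq:xi_where_2s}.
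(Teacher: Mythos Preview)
Your overall architecture (monotonicity, alive-status coupling via Lemma~\ref{lem:couple_ones}, speed bound, then excluding the single possible mismatch $\xi_t(x)=1,\ \xi'_t(x)=2$) matches the paper's. But your Case~2 in the mismatch argument has a real gap. The descendancy barrier only gives you \emph{some} infection path from $(x^*,0)$ to the entry point $(y,s)$ of the cone; when you concatenate it with the tail of the $\xi'$-canonical path you obtain an infection path in $H$ from a type-$2$ site to $(x,t)$, but \eqref{eq:best_path} does not let you read off $\xi_t(x)=2$ from that alone. You need the path to be \emph{admissible} for $\xi$, i.e.\ at every jump the target is empty for the $\xi$-process. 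For the tail segment this follows from admissibility for $\xi'$ together with $\{\xi'_s=0\}\subseteq\{\xi_s=0\}$, but on the barrier-provided segment from $(x^*,0)$ to $(y,s)$ you have no control whatsoever over admissibility. So the conclusion $\xi_t(x)=2$ is unjustified. (Your Case~1, by contrast, is fine: the $\xi'$-canonical path is a path in $H_{\Z}\subseteq H_{\{\xi_0\neq 0\}}$ starting in $\{\xi_0\neq 0\}$, so by uniqueness it coincides with the $\xi$-canonical path.)

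The paper circumvents this difficulty by introducing a second, lower level $f^{(1)}_t=\tfrac{S}{4}+\tfrac{\beta}{2}t$ beneath $f^{(2)}_t=\tfrac{S}{2}+\beta t$ and the event $E_3=\{r(\xi'_t)<f^{(1)}_t\ \forall t\}$ (supplied by Lemma~\ref{lem:no_faster} applied to $\xi'$). Instead of looking at the $\xi'$-canonical path, it takes the type-$1$-carrying path $\upgamma$ for $\xi$ and argues it cannot exceed $f^{(1)}$: if it did, its restriction up to the first excursion time would stay below $f^{(2)}$, hence by the alive-coupling $E_1$ it would be admissible for $\xi'$ and witness $r(\xi')>f^{(1)}$, contradicting $E_3$. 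Once $\upgamma$ is trapped below $f^{(1)}$, the same alive-coupling transfers its admissibility to $\xi'$ and gives $\xi'_t(x)=1$, completing $\{\xi_t=1\}=\{\xi'_t=1\}$. This two-level trick is precisely what replaces your barrier splice.

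A secondary issue: your direct bound on $r(\xi_t)$ via the proof of Lemma~\ref{lem:no_faster} is not justified as written. That proof uses either the monotone comparison $\xi^h_0\prec\xi_0$ (requiring $\xi_0\equiv 2$ on $(0,\infty)$, not just on $[0,S]$) or the duality \eqref{eq:duality_equation_multi} (requiring $\xi_0$ to have no zeros); neither hypothesis is available for your $\xi_0$. The paper never bounds $r(\xi_t)$ independently: it bounds $r(\xi'_t)$ and then gets $r(\xi_t)=r(\xi'_t)$ as a consequence of the equality $\{\xi_t=1\}=\{\xi'_t=1\}$.
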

\begin{proof}
Given $S > 0$, we write
\begin{align*}&f^{(1)}_t = \frac{S}{4} + \frac{\beta}{2} t,\quad f^{(2)}_t = \frac{S}{2} + \beta t,\quad t \geq 0.\end{align*}
Fix $\varepsilon > 0$. By Lemmas \ref{lem:couple_ones}, \ref{lem:desc_bar_sides} and \ref{lem:no_faster}, if $S$ is large enough, then with probability larger than $1-\varepsilon$ all the following three events occur: 
\begin{align*}
&E_1 = \left\{\{\xi_t = 0\} \cap (-\infty,f^{(2)}_t] = \{\xi'_t = 0\} \cap (-\infty, f^{(2)}_t] \text{ for all } t \geq 0\right\},\\[.2cm]
&E_2 = \left\{\text{there exists } x \in (f^{(1)}_t,\;f^{(2)}_t):\;\xi'_t(x) \neq 0 \text{ for all } t \geq 0 \right\},\\[.2cm]
&E_3 = \left\{r(\xi'_t) < f^{(1)}_t \;\text{for all } t \geq 0 \right\}.
\end{align*}
We will also assume that $S > 4R$.

We will now state and prove two auxiliary claims.\\

\noindent \textit{Claim 1.} On $E_1 \cap E_3$, $\{\xi_t = 1\} = \{\xi'_t = 1\},\; \{\xi_t = 2\} \subseteq \{\xi'_t = 2\}$ for all $t$.\\
To see that this holds, first note that
$$\{\xi_0 = 1\} = \{\xi'_0 = 1\},\;\{\xi_0 = 2\} \subseteq \{\xi'_0 = 2\},$$ 
so applying \eqref{eq:attract_multi} we get 
$$\{\xi_t = 1\} \supseteq \{\xi'_t = 1\},\; \{\xi_t = 2\} \subseteq \{\xi'_t = 2\}\text{ for all }t.$$ 
We now fix $(x,t)$ with $\xi_t(x) = 1$ and will show that 
\begin{equation}\xi'_t(x) = 1.\label{eq:tocompcl}\end{equation} Using \eqref{eq:best_path}, it follows from $\xi_t(x) = 1$ that there exists an infection path $\upgamma: [0,t]\to\Z$ so that  $\upgamma(t) = x$,
\begin{align} \label{eq:initial_xi} & \qquad\qquad\quad\;\;\xi_0(\upgamma(0)) = 1,\\
&\xi_{s-}(\upgamma(s)) = 0\text{ whenever }\upgamma(s-) \neq \upgamma(s).\label{eq:nice_jumps_xi} \end{align} 
\eqref{eq:no_2s}, the definition of $\xi'$ and \eqref{eq:initial_xi} imply that
\begin{equation}\label{eq:initial_xip}
\xi'(\upgamma(0)) =1.
\end{equation}
Additionally, the definition of $E_1$ and \eqref{eq:nice_jumps_xi} together give
\begin{equation}
\xi'_{s-}(\upgamma(s)) = 0\text{ for every $s$ such that } \upgamma(s)\leq f^{(2)}_s \text{ and } \upgamma(s-) \neq \upgamma(s). \label{eq:nice_jumps_xip}
\end{equation}
We must have \begin{equation}\label{eq:we_must_have}\upgamma(s) \leq f^{(1)}_s\text{ for every }s \in [0,t],\end{equation}
otherwise we would obtain a contradiction as follows. Let $\bar s$ be the smallest time for which $\upgamma(\bar s)>f^{(1)}_{\bar s}$. Since $f^{(2)}_{\bar s} - f^{(1)}_{\bar s} \geq S/4 > R$ and each jump of $\upgamma$ has size at most $R$, we would have $f^{(1)}_{\bar s} \leq \upgamma(\bar s) \leq f^{(2)}_{\bar s}$. Again using \eqref{eq:best_path}, \eqref{eq:initial_xip} and \eqref{eq:nice_jumps_xip}, we would get $\xi'_{\bar s}(\upgamma(\bar s)) = 1$, so $r(\xi'_{\bar s}) > f^{(1)}_{\bar s}$, contradicting the assumption that $E_3$ occurs. Now, \eqref{eq:nice_jumps_xip}, \eqref{eq:we_must_have} and another application of \eqref{eq:best_path} give \eqref{eq:tocompcl}, so the proof of Claim 1 is complete.\\

\noindent \textit{Claim 2.} On $E_2 \cap E_3$, $\ell(\xi'_t) \leq f^{(2)}_t$ for all $t$.\\
Indeed, by the definition of $E_2$, for any $t$  there exists $x \in [f^{(1)}_t,f^{(2)}_t]$ such that $\xi'_t(x) \neq 0$, and by the definition of $E_3$ we have $r(\xi'_t) < f^{(1)}_t$, so that $x > r(\xi'_t)$, thus $\xi'_t(x) = 2$, thus $\ell(\xi'_t) \leq x \leq f^{(2)}_t$ .\\

We are now ready to conclude. From Claim 1 and the definition of $E_1$, we have that
$$\text{on } E_1 \cap E_3,\;\xi_t(x) = \xi'_t(x) \text{ for all } t \geq 0,\; x \leq f^{(2)}_t.$$
From Claim 1 and the definition of $E_3$,
$$\text{on } E_1 \cap  E_3,\; r(\xi_t) = r(\xi'_t) < f^{(1)}_t < f^{(2)}_t.$$
From Claim 1 and Claim 2,
\begin{equation*}\text{on } E_1 \cap E_2 \cap E_3,\; \ell(\xi_t) = \ell(\xi'_t) < f^{(2)}_t.\end{equation*}
\end{proof}

\begin{corollary}\label{cor:all_tog}
For any $\varepsilon > 0$ there exists $S_0 > 0$ such that the following holds. Assume $S \ge S_0$ and $\xi_0$ satisfies, for some $a,b \in \Z$ with $a< 0 < b$:
\begin{align}
\label{eq_cor_as1}&\xi_0 \equiv 1\text{ on } [a-S, a];\\
\label{eq_cor_as2}&a< r(\xi_0),\;\ell(\xi_0) < b;\\
\label{eq_cor_as3}&\xi_0(x) \equiv 2 \text{ on } [b,b+S].
\end{align}
Let $(\hat \xi_t)_{t \geq 0}$ be the process started from
$$\hat\xi_0(x) = \mathds{1}_{(-\infty, a]}(x) + \mathds{1}_{(a, b)}(x) \cdot \xi_0(x) + 2\cdot\mathds{1}_{[b,\infty)}(x).$$
Then, with probability larger than $1-\varepsilon$, 
\begin{align}\label{eq:couple_int_fin}
\text{ for every } t\ge 0,\quad &r(\xi_t) = r(\hat\xi_t),\quad \ell(\xi_t) = \ell(\hat\xi_t) \text{ and }\\[.2cm]
&a -\frac{S}{2} - \beta t < r(\xi_t),\ell(\xi_t) < b + \frac{S}{2} + \beta t.
\end{align}
\end{corollary}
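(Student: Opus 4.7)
The plan is to reduce the statement to two successive applications of Lemma \ref{lem:couple_left}, one for each isolation segment, combined by a union bound. Specifically, I will first use the right-hand isolation segment $[b,b+S]$ to ``fill in'' 2's on $[b,\infty)$, and then use the left-hand isolation segment $[a-S,a]$ to ``fill in'' 1's on $(-\infty,a]$; each step is an instance of Lemma \ref{lem:couple_left} after appropriate translation.

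Introduce the intermediate configuration
$$\xi'_0(x) = \mathds{1}_{(-\infty,b)}(x)\cdot \xi_0(x) + 2\cdot \mathds{1}_{[b,\infty)}(x),$$
and let $(\xi'_t)_{t\ge 0}$ denote the process started from $\xi'_0$, constructed with the same Harris system as $(\xi_t)$. After spatially translating by $-b$, assumption \eqref{eq_cor_as3} becomes \eqref{eq:all_1s}, and \eqref{eq_cor_as2} (together with the fact that $\xi_0\equiv 2$ on $[b,b+S]$ forces $r(\xi_0)<b$) becomes \eqref{eq:no_2s}. Hence, taking $S\ge S_0(\varepsilon/2)$ with $S_0$ from Lemma \ref{lem:couple_left}, we get with probability at least $1-\varepsilon/2$ that for all $t\ge 0$,
\begin{equation*}
\xi_t\equiv \xi'_t \text{ on } (-\infty,\,b+S/2+\beta t],\quad r(\xi_t)=r(\xi'_t),\quad \ell(\xi_t)=\ell(\xi'_t),\quad r(\xi_t),\ell(\xi_t)<b+S/2+\beta t.
\end{equation*}

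Now apply the obvious symmetric version of Lemma \ref{lem:couple_left}, obtained by swapping the two species and reflecting space, to the pair $\xi'_0$ and $\hat\xi_0$, after translating by $-a$. The required hypotheses follow from \eqref{eq_cor_as1} and \eqref{eq_cor_as2}: $\xi'_0$ coincides with $\xi_0$ on $(-\infty,b)\supseteq [a-S,a]$ so $\xi'_0\equiv 1$ there, while $\ell(\xi'_0)=\ell(\xi_0)>a$ since $\ell(\xi_0)<b$ ensures the additional 2's on $[b,\infty)$ do not move $\ell$. Enlarging $S$ if necessary, the symmetric lemma gives, with probability at least $1-\varepsilon/2$, that for all $t\ge 0$,
\begin{equation*}
\xi'_t\equiv \hat\xi_t \text{ on } [a-S/2-\beta t,\,\infty),\quad r(\xi'_t)=r(\hat\xi_t),\quad \ell(\xi'_t)=\ell(\hat\xi_t),\quad r(\xi'_t),\ell(\xi'_t)>a-S/2-\beta t.
\end{equation*}

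A union bound then gives probability at least $1-\varepsilon$ for the intersection of the two events, on which $r(\xi_t)=r(\hat\xi_t)$, $\ell(\xi_t)=\ell(\hat\xi_t)$, and $a-S/2-\beta t < r(\xi_t),\ell(\xi_t) < b+S/2+\beta t$ for every $t\ge 0$, as desired. There is no serious obstacle: the only mild point is to verify that the symmetric lemma applies to the intermediate $\xi'_0$ rather than to $\xi_0$ itself, which is automatic because the two configurations differ only on $[b,\infty)$, well away from the left-hand isolation segment.
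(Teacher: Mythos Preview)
Your proposal is correct and follows essentially the same route as the paper: introduce the intermediate configuration $\xi'_0$ by filling in 2's on $[b,\infty)$, apply Lemma~\ref{lem:couple_left} (translated by $-b$) to couple $\xi$ with $\xi'$, then apply the symmetric version (translated by $-a$) to couple $\xi'$ with $\hat\xi$, and combine by a union bound. The only cosmetic difference is that you verify $\ell(\xi'_0)>a$ for the second step, while the paper (apparently by a slip) writes $r(\xi'_0)>a$; your version is the correct hypothesis for the reflected lemma.
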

\begin{proof}
We will also need $(\xi'_t)_{t \geq 0}$, the process started from
$$\xi'_0(x) = \mathds{1}_{(-\infty,b)}(x)\cdot \xi_0(x) + 2 \cdot \mathds{1}_{[b,\infty)}(x).$$
Given $\varepsilon > 0$, by Lemma \ref{lem:couple_left}, $S_0$ can be chosen so that, if \eqref{eq_cor_as2} and \eqref{eq_cor_as3} hold, then  
\begin{align}
\label{eq:eff_coup_1}\mathbb{P}\left[\begin{array}{ll}\text{for every } t \geq 0,&\ell(\xi_t) = \ell(\xi'_t),\; r(\xi_t) = r(\xi'_t)\\[.2cm]&\text{and }\ell(\xi_t), r(\xi_t) < b + \frac{S}{2} + \beta t \end{array}\right]>1-\varepsilon/2.
\end{align}
Now, note that \eqref{eq_cor_as3} and the definition of $\xi'_0$ imply
\begin{align*}
&\xi'_0 \equiv 1 \text{ on } [a-S, a],\\
&r(\xi'_0) > a,
\end{align*}
so that we can again use Lemma \ref{lem:couple_left} (and symmetry) to obtain that
\begin{align}
\mathbb{P}\left[\begin{array}{ll}\text{for any } t \geq 0,&\ell(\xi'_t) = \ell(\hat\xi_t),\;\; r(\xi'_t) = r(\hat \xi_t)\\[.2cm] &\text{and } \ell(\xi'_t), r(\xi'_t) > a - \frac{S}{2} - \beta t\end{array}\right] > 1-\varepsilon/2.\label{eq:eff_coup_inf}
\end{align}
Putting \eqref{eq:eff_coup_1} and \eqref{eq:eff_coup_inf} together, we obtain the desired result.
\end{proof}

\begin{proofof}{\textit{Proposition } \ref{prop:omega_eps_K}.} 
Given $\varepsilon > 0$, we choose $S$ large enough corresponding to $\varepsilon/3$ in Corollary \ref{cor:all_tog}. Increasing $S$ if necessary, by Lemma \ref{lem:no_faster}, we can also assume the following (recall that $r_t = r(\xi^h_t)$ and $\ell_t = \ell(\xi^h_t)$, where $(\xi^h_t)$ is the process started from the heaviside configuration). 
$$\P\left[r_t,\;\ell_t \in [S - \bar \beta t,\; S + \bar \beta t] \text{ for all } t \geq 0\right] > 1 - \varepsilon.$$
Then, given $L > 0$, we choose $t_0$ corresponding to $\varepsilon/3$ and $L$ in Lemma \ref{prop:almost_all_couples_all}.

Now assume $\xi_0 \in \Gamma_{S,L}$. Then, there exist $a < b$ as prescribed in \eqref{eq:def_gamma}; note in particular that $r(\xi_0), \ell(\xi_0) \in (a,b)$, so that $i(\xi_0) \in (a,b)$. Let
\begin{align*}
&\hat \xi_0(x) = \mathds{1}_{(-\infty, a]}(x) + \mathds{1}_{(a, b)}(x) \cdot \xi_0(x) + 2 \cdot \mathds{1}_{[b,\infty)}(x),\\[.2cm]
&\tilde \xi_0(x) = \mathds{1}_{(-\infty, \lfloor i(\xi_0) \rfloor]} + 2\cdot \mathds{1}_{(\lfloor i(\xi_0)\rfloor,\infty)}
\end{align*}
and $(\hat \xi_t)$, $(\tilde \xi_t)$ be the processes started from these configurations. By our choice of $S$ and $t_0$, with probability larger than $1-\varepsilon$ the following three events occur:
\begin{align*}
&\text{for all } t \ge 0,\;i(\xi_t) = i(\hat \xi_t) \in [a - S - \bar \beta t,\;b+S+\bar \beta t];\\[.2cm]
&\text{for all } t \ge 0,\;i(\tilde \xi_t) \in [\lfloor i(\xi_0) \rfloor - S - \bar\beta t,\;\lfloor i(\xi_0) \rfloor + S + \bar \beta t] \subset [a - S - \bar \beta t,\; b+S + \bar \beta t];\\[.2cm]
&\text{for all } t \ge t_0,\;i(\hat \xi_t) = i(\tilde \xi_t).
\end{align*}
If these events all occur, we have 
$$\begin{aligned}&|i(\xi_t) - i(\tilde \xi_t)| \leq b-a+2S +2\bar \beta t_0 \leq L + 2S + 2\bar\beta t_0 \text{ if }t \leq t_0 \text{ and }\\[.2cm] &|i(\xi_t) - i(\tilde \xi_t)|=0 \text{ if } t > t_0.\end{aligned}$$ The desired result now holds for $K = L+2S +2\bar \beta t_0$.
\end{proofof}

\subsection{Proof of Proposition \ref{prop:gamma_sl}}
Recall the definition of $\Omega$ in \eqref{eq:def_of_Omega}. Given $\xi \in \Omega$, we write
$$m(\xi) = \min(r(\xi),\ell(\xi)),\quad M(\xi) = \max(r(\xi),\ell(\xi))$$
and also
$$m_t = m(\xi^h_t) = \min(r_t, \ell_t),\quad M_t = M(\xi^h_t) = \max(r_t,\ell_t).$$
For $t \geq 0$, define
$$X^{(0)}_t = m_t,\qquad X^{(j+1)}_t = \min\{x > X^{(j)}_t: \xi_t(x) \neq 0\},\; j = 0 ,1,\ldots.$$

\begin{lemma}
\label{lem:order_many}
For any $\varepsilon > 0$ and $k \in \N$ there exists $L > 0$ such that
\begin{equation*}\P\left[X^{(k)}_t - X^{(0)}_t > L\right] < \varepsilon \text{ for all } t \geq 0.\end{equation*}
\end{lemma}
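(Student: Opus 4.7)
The plan is to split the displacement $X^{(k)}_t - X^{(0)}_t = X^{(k)}_t - m_t$ into two contributions. By Theorem \ref{thm:size_inter_tight}, first choose $L_1$ so that $\P[M_t - m_t > L_1] < \varepsilon/2$ for all $t \geq 0$. On the event $\{M_t - m_t \le L_1\}$ the site $M_t$ is occupied and lies in $(m_t, m_t + L_1]$, so $M_t$ accounts for one of the $k$ occupied sites we need in $(m_t, m_t + L]$. Setting $L = L_1 + L_2$, it therefore suffices to exhibit $L_2$ such that, uniformly in $t \geq 0$, the window $(M_t, M_t + L_2]$ contains at least $k - 1$ occupied sites of $\xi^h_t$ with probability at least $1 - \varepsilon/2$.

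For this second bound, the key structural observation is that $M_t \geq r_t$, so no site in $(M_t, \infty)$ is of type $1$; in particular $\xi^h_t(y) \neq 0 \Leftrightarrow \xi^h_t(y) = 2$ for every $y > M_t$. By the colorblind identity \eqref{eq:colorblind}, the occupancy field $(\mathds{1}_{\{\xi^h_t(y) \neq 0\}})_{y \in \Z}$ has the law of the supercritical one-type contact process $\zeta^\Z_t$ started from full occupancy, which stochastically dominates the upper invariant measure $\bar\nu$ of density $\rho > 0$. Hence for every \emph{fixed} $x \in \Z$ and every $t \geq 0$,
\begin{equation*}
\P\bigl[\#\{y \in (x, x + L_2] : \zeta^\Z_t(y) = 1\} < k-1\bigr] \leq \P_{\bar\nu}\bigl[\#\{y \in (0, L_2] : \zeta(y) = 1\} < k-1\bigr],
\end{equation*}
and the right-hand side tends to $0$ as $L_2 \to \infty$ by ergodicity (in fact exponentially, by the standard exponential mixing of the supercritical CP).

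The hard part is passing from this deterministic-window bound to the random window $(M_t, M_t + L_2]$: since $|M_t|$ can be of order $\sqrt{t}$ (by Lemma \ref{lem:rell_not_move}), a naive union bound over possible values of $M_t$ loses uniformity in $t$. To address this I plan to work in the dual picture, where occupancy of $y$ at time $t$ is equivalent to $\eta^y_t \neq \triangle$, and the ancestors $(\eta^y_t)_{y}$ behave like coalescing random walks. The idea is that the ancestors starting at $(M_t, M_t + L_2] \times \{t\}$ quickly move away from the region that defines $M_t$, so their survival probabilities (each at least $\rho$) can be analyzed essentially independently of the event determining $M_t$. Concretely, a descendancy-barrier argument in the spirit of Lemma \ref{lem:couple_ones}, combined with the renewal structure of Proposition \ref{prop:rmn} and the pairwise estimate \eqref{eq:pairs_meet}, should let us compare the number of surviving ancestors starting just to the right of $M_t$ with the density of $\bar\nu$, giving the required uniform lower bound on the count of occupied sites in $(M_t, M_t + L_2]$ and completing the proof.
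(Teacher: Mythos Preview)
Your proposal has a genuine gap precisely at the step you flag as ``the hard part''. The random window $(M_t,M_t+L_2]$ is determined by the same Harris system that produces the occupancy inside it, and there is no decoupling: $M_t=\max(r_t,\ell_t)$ is a functional of the configuration $\xi^h_t$ itself, so the law of the occupancy pattern immediately to the right of $M_t$ is \emph{not} that of $\bar\nu$ shifted to a deterministic point. In fact one should expect a bias---$\ell_t$ is by definition the left endpoint of a block of non-1's, so large empty stretches can favor certain values of $M_t$. The tools you invoke (descendancy barriers, renewal structure, \eqref{eq:pairs_meet}) control the marginal behavior of individual ancestors, not the joint law of ``where the interface sits'' and ``how many ancestors survive next to it''; your last paragraph is a hope, not an argument, and I do not see how to turn it into one without essentially reproving interface tightness in a stronger form.

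The paper's proof takes a completely different and much shorter route. It argues by contradiction: if the lemma fails, there are times $t_n$ at which, with probability bounded below, the interval $[m_{t_n},m_{t_n}+n]$ contains at most $k$ occupied sites (and, by tightness, also contains $M_{t_n}$). Then in one unit of time, with probability bounded below by a constant depending only on $R,k,\lambda$, one can kill those $\le k$ occupants and the $2R$ neighboring boundary sites while suppressing all their outgoing arrows. This forces $r_{t_n+1}<m_{t_n}$ and $\ell_{t_n+1}>m_{t_n}+n$, hence $\ell_{t_n+1}-r_{t_n+1}>n$ with probability bounded away from zero for all $n$, contradicting Theorem~\ref{thm:size_inter_tight}. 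The key idea you are missing is this \emph{dynamical} use of interface tightness: rather than trying to read off density near the interface from the static law, one shows that low density near the interface would, after a short evolution, manufacture a wide interface.
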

\begin{proof}
If the statement is false, then one can find $\delta > 0$, $k \in \N$ and a sequence of times $(t_n)_{n \in \N}$ such that
\begin{equation}\P\left[X^{(k)}_{t_n} - m_{t_n} > n\right] > \delta \text{ for all } n \in \N.
\nonumber\end{equation}
By tightness of the size of the interface (as given by \eqref{eq:interface_tightness}), we can then find $L_0 > 0$ such that
\begin{equation}\P\left[M_{t_n} - m_{t_n} \leq L_0,\; X^{(k)}_{t_n} - m_{t_n} > n\right] > \delta/2\text{ for all } n \in \N.
\label{eq:negation_more}\end{equation}
Let us denote by $E_n$ the event inside the above probability. Note that
\begin{equation}\label{eq:pre_key}\text{if } n > L_0, \text{ then } E_n \subset \{[m_{t_n},M_{t_n}] \subset [m_{t_n},X^{(k)}_{t_n}]\}.\end{equation}

For $n > L_0$, define the event
$$F_n = E_n \cap \left\{\begin{array}{c}\text{in the time interval } [t_n,t_{n+1}], \text{ all vertices in } \\[0.2cm] [m_{t_n} - R,\;m_{t_n}) \cup \{X^{(0)}_{t_n},\ldots, X^{(k)}_{t_n}\} \cup (X^{(k)}_{t_n},\;X^{(k)}_{t_n} + R] \\[.2cm] \text{have a death mark and do not originate any arrow.} \end{array} \right\}$$
Since the set of vertices that appears in the definition of $F_n$ contains $2R + k$ vertices, we have
$
\P[F_n\;|\;E_n] = e^{(-1-2R\lambda)(2R + k)},
$ so that, by \eqref{eq:negation_more},
\begin{equation}\label{eq:pre_key1}
\P[F_n] \geq \frac{\delta}{2}\cdot e^{(-1-2R\lambda)(2R + k)}.\end{equation}
Additionally, by \eqref{eq:pre_key},
\begin{equation} \label{eq:pre_key2}F_n \subseteq \{r_{t_n+1} < m_{t_n},\; \ell_{t_n + 1} > m_{t_n} + n\} \subseteq \{\ell_{t_n + 1} - r_{t_n + 1} > n\}.\end{equation}
Now, \eqref{eq:pre_key1} and \eqref{eq:pre_key2} together imply
$$\text{for all } n > L_0,\; \P\left[\ell_{t_n + 1} - r_{t_n + 1} > n \right] > \frac{\delta}{2}\cdot e^{(-1-2R\lambda)(2R + k)}. $$
This contradicts tightness of the interface size, \eqref{eq:interface_tightness}.
\end{proof}

We will need one extra subset of $\{0,1,2\}^\Z$, defined for $k \in \N$ and $L > k$ by
\begin{equation}\label{eq:def_Pi}\Pi_{k,L} = \left\{\xi \in \Omega:\begin{array}{l}\#\{x \in [m(\xi) - L,\; m(\xi)):\xi(x) = 1\} \geq k,\\[.2cm] \#\{x \in (M(\xi),\; M(\xi) + L]: \xi(x) = 2\} \geq k ,\\[.2cm]M(\xi) - m(\xi) \leq L\end{array}\right\}. \end{equation}

\begin{lemma}\label{lem:gamma_pi_good}
For any $\varepsilon > 0$ and $k \in \N$ there exists $L > 0$ such that, for any $t \geq 0$,
\begin{equation*}\mathbb{P}\left[\xi^h_t \in \Pi_{k,L}\right] > 1- \varepsilon.\end{equation*}
\end{lemma}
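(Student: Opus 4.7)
The plan is to combine interface-size tightness \eqref{eq:interface_tightness}, Lemma \ref{lem:order_many}, and a spatially mirrored version of the latter. Fix $\varepsilon > 0$ and $k \in \N$.

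First I would invoke \eqref{eq:interface_tightness} to pick an integer $L_1$ with $\P[M_t - m_t > L_1] < \varepsilon/3$ uniformly in $t$, and apply Lemma \ref{lem:order_many} with $k' := k + L_1$ to obtain $L_2$ such that $\P[X^{(k')}_t > m_t + L_2] < \varepsilon/3$ uniformly in $t$. The key observation is that any nonzero site of $\xi^h_t$ lying in $(M_t, \infty)$ is automatically of type $2$, since $r_t$ is the rightmost $1$ and $r_t \le M_t$. Hence on the event $\{M_t - m_t \le L_1\} \cap \{X^{(k')}_t \le m_t + L_2\}$, at most $L_1$ of the sites $X^{(1)}_t, \ldots, X^{(k')}_t$ fall in $(m_t, M_t]$, so at least $k' - L_1 = k$ of them lie in $(M_t, m_t + L_2] \subseteq (M_t, M_t + L_2]$ and each is a $2$. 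This handles the right-hand half of the definition of $\Pi_{k, L_2}$ with probability at least $1 - 2\varepsilon/3$.

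For the left-hand half, I would define analogously $Y^{(0)}_t := M_t$ and $Y^{(j+1)}_t := \max\{x < Y^{(j)}_t : \xi^h_t(x) \ne 0\}$, and apply a spatially mirrored version of Lemma \ref{lem:order_many} to obtain $L_2'$ with $\P[Y^{(k')}_t < M_t - L_2'] < \varepsilon/3$. Nonzero sites of $\xi^h_t$ in $(-\infty, m_t)$ are necessarily of type $1$, and an identical counting argument (at most $L_1 + 1$ of the $Y^{(j)}_t$ for $0 \le j \le k'$ can lie in the integer interval $[m_t, M_t]$) yields at least $k$ ones in $[M_t - L_2', m_t) \subseteq [m_t - L_2', m_t)$. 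Setting $L := \max(L_2, L_2')$ and taking a union bound over the three events gives $\P[\xi^h_t \in \Pi_{k, L}] > 1 - \varepsilon$ for every $t \ge 0$.

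The main and essentially only obstacle is justifying the mirrored version of Lemma \ref{lem:order_many}. This is a free consequence of the symmetry of the dynamics and of the proof (both the interface-tightness input and the local `death marks plus no outgoing arrows on a constant-sized buffer' construction are left-right symmetric); more formally, one can deduce it via the involution $\phi\xi := \mathrm{swap}\circ\xi\circ(-\mathrm{Id})$, under which the generator \eqref{eq:generator} is invariant and $\phi\xi^h_0$ differs from $\xi^h_0$ only at the origin, combined with Lemma \ref{prop:almost_all_couples_all} to couple $\phi(\xi^h_t)$ with $\xi^h_t$ from a fixed time onward (the short remaining interval being handled by a crude Poisson comparison).
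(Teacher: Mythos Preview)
Your proof is correct and follows the same route as the paper: both combine interface-size tightness with Lemma \ref{lem:order_many} applied to $k' = k + L_0$ (your $L_1$), count how many of the $X^{(j)}_t$ can fall inside the interface, and then invoke symmetry for the left side. One small simplification: your worry about $\phi\xi^h_0$ differing from $\xi^h_0$ at the origin is unnecessary, since in fact $\phi\xi^h_0(x) = \xi^h_0(x+1)$ for all $x$, so the mirrored process has exactly the law of $(\xi^h_t)$ shifted by one site, and no coupling via Lemma \ref{prop:almost_all_couples_all} is needed. Also make sure to take $L \ge L_1$ (e.g.\ $L = \max(L_1,L_2,L_2')$) so that the condition $M_t - m_t \le L$ in \eqref{eq:def_Pi} is guaranteed on your good event.
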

\begin{proof}
Fix $\varepsilon > 0$ and $k \in \N$. By \eqref{eq:interface_tightness}, we can choose $L_0$ so that
\begin{equation} \label{eq:again_tight}
\P[M_t - m_t > L_0] < \varepsilon/2 \text{ for all } t \geq 0.\end{equation}
Let $k'= L_0 + k$. We now choose $L > L_0$ corresponding to $\varepsilon/4$ and $k'$ in Lemma \ref{lem:order_many}; we get
\begin{equation}\begin{split}
&\P\left[M_t - m_t \leq L_0,\; \#\{x\in (M_t, M_t + L]: \xi_t(x) = 2\} < k\right] \\[.2cm]&\leq \P\left[\#\{x \in (m_t, M_t + L]: \xi_t(x) \neq 0 \} < k' \right] \\[.2cm]&\leq \P\left[X^{(k')}_t > M_t + L\right] \leq \P\left[X^{(k')}_t > L\right]<  \varepsilon/4.
\end{split} \label{eq:LLprime}\end{equation}
By symmetry we also get
\begin{equation}\label{eq:LLprime2}
\P\left[M_t - m_t \leq L_0,\;\#\{x \in [m_t - L, m_t): \xi_t(x) = 1\} < k\right] < \varepsilon/4.
\end{equation}
The desired statement now follows from putting together \eqref{eq:again_tight} (with the observation that $L_0 < L$), \eqref{eq:LLprime} and \eqref{eq:LLprime2}.
\end{proof}

\begin{lemma}\label{lem:after_one_second}For any $\varepsilon > 0$ and $S > 0$ there exists $k \in \N$ such that, for any $L \geq k$, if $\xi_0 \in \Pi_{k, L}$, then
$$\P\left[\xi_1 \in \Gamma_{S,L} \right] > 1-\varepsilon.$$
\end{lemma}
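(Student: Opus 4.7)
The plan is to construct, for $k$ chosen sufficiently large in terms of $\varepsilon, S, R, \lambda$, an event on the Harris system in $[0,1]$ with probability at least $1 - \varepsilon$ which forces $\xi_1 \in \Gamma_{S,L}$ for every $L \geq k$ and every $\xi_0 \in \Pi_{k,L}$. The good event will be the intersection of a small-motion bound for the interface edges and, on each side of the interface, an independent ``flooding'' event that builds a length-$S$ block of the appropriate type.

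For the small-motion bound I use \eqref{eq:behM}: for some $C_0 = C_0(\varepsilon)$, with probability $\geq 1 - \varepsilon/4$ no infection path in $[0,1]$ travels more than $C_0$, so $|r_1 - r_0|, |\ell_1 - \ell_0| \leq C_0$ and no type-$2$ descendant penetrates left of $\ell(\xi_0) - C_0$ (symmetrically for $1$'s). For the left flooding, I partition $[m(\xi_0) - L, m(\xi_0))$ into consecutive blocks of length $T := 2(S+R) + 1$ and classify them as even- or odd-indexed. By pigeonhole the $\geq k$ left $1$'s populate at least $\lceil k/T \rceil$ blocks, hence one parity class contains $K \geq \lceil k/(2T) \rceil$ populated blocks; picking that class, in each such block fix one $1$ at position $q_j$. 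Consecutive $q_j$'s lie in blocks separated by one block of the opposite parity, so $|q_j - q_{j+1}| \geq T + 1 > 2(S+R)$, and the spatial windows $[q_j - (S+R), q_j + (S+R)]$ are pairwise disjoint. For each $j$ define $A_j$: during $[0,1]$ in its window, (i) no recovery mark hits any site of $[q_j - S, q_j]$, and (ii) a prescribed cascade of nearest-neighbor arrows fires to propagate type $1$ leftward from $q_j$ through $[q_j - S, q_j]$. A direct Poisson computation gives $\P[A_j] \geq p_0(S, R, \lambda) > 0$, and by window-disjointness the $A_j$'s are mutually independent, so
$$\P\left(\bigcup_j A_j\right) \geq 1 - (1-p_0)^K,$$
which exceeds $1 - \varepsilon/4$ once $k$ is large enough. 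A symmetric argument on the right yields, with probability $\geq 1 - \varepsilon/4$, a position $q'_{j'}$ with $\xi_1 \equiv 2$ on $[q'_{j'}, q'_{j'} + S]$.

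To assemble $\xi_1 \in \Gamma_{S,L}$, set $a := q_j$ for the rightmost successful left flood and $b := q'_{j'}$ for the leftmost successful right flood. The conditions $\xi_1 \equiv 1$ on $[a - S, a]$, $\xi_1 \equiv 2$ on $[b, b+S]$, and $r(\xi_1), \ell(\xi_1) \in (a, b)$ follow from the flood events and the small-motion step. The remaining condition $b - a \leq L$ is the main technical hurdle: the naive bound $b - a \leq (M(\xi_0) - m(\xi_0)) + 2L$ is too large. I expect to handle this by supplementing the flood attempts at the $q_j$'s with a flood attempt centered at the $1$ at $r(\xi_0)$ itself (always present in $\xi_0$ by definition of $r$), and symmetrically at $\ell(\xi_0)$; these near-interface floods produce $a$ within distance $O(T)$ of $m(\xi_0)$ and $b$ within distance $O(T)$ of $M(\xi_0)$, yielding $b - a \leq (M(\xi_0) - m(\xi_0)) + O(T) \leq L$ after a careful calibration of $T$ using the slack $L - (M(\xi_0) - m(\xi_0)) \geq 0$. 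Balancing $T$ against the number of independent flood attempts needed to make $\P(\bigcup_j A_j)$ close to $1$ is the most delicate bookkeeping of the proof.
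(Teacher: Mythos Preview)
Your overall plan---find many well-separated occupied sites on each side of the interface and run independent local ``flood'' events so that with high probability at least one succeeds on each side---is exactly the paper's approach. The paper's flood event $F_S(x)$ is packaged slightly differently: besides a connection from $(x,0)$ to every point of $[x-S,x+S]\times\{1\}$, it also forbids any arrow from \emph{outside} $[x-S,x+S]$ into that window during $[0,1]$. This built-in isolation is what guarantees both $\xi_1\equiv 1$ on the window and that no $2$ can cross it, without any appeal to a global small-motion estimate. (Your phrase ``no infection path in $[0,1]$ travels more than $C_0$'' is a probability-zero event as written, since there are infinitely many starting sites; what you actually need is the one-sided version ``no type-$2$ descendant reaches left of $\ell(\xi_0)-C_0$'', which is true but less clean than the paper's device.)

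The place where your sketch breaks down is the one you yourself flag: the requirement $b-a\le L$. Your proposed fix---adding single flood attempts centred at $r(\xi_0)$ and $\ell(\xi_0)$---cannot work: each such attempt succeeds only with some fixed probability $p_0(S,R,\lambda)<1$, independently of $k$, so no calibration of $T$ will promote this to probability $1-\varepsilon$ for arbitrary $\varepsilon$. Worse, the slack $L-(M(\xi_0)-m(\xi_0))$ you hope to exploit can be zero: take $L=k$ and any $\xi_0\in\Pi_{k,L}$ with $M(\xi_0)-m(\xi_0)=L$. A short local-freezing argument (forbid all Poisson marks touching the two sites $m(\xi_0),M(\xi_0)$ and their neighbours during $[0,1]$) shows that with probability at least some $p=p(R,\lambda)>0$ one has $M(\xi_1)-m(\xi_1)\ge L$, which already rules out $\xi_1\in\Gamma_{S,L}$. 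In other words, the lemma is false exactly as stated, and the paper's own proof has the same gap: the displayed inclusion there only yields $b-a\le 3L$.

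The resolution is not to sharpen the argument but to weaken the conclusion. Both your construction and the paper's give $a\in[m(\xi_0)-L,m(\xi_0))$ and $b\in(M(\xi_0),M(\xi_0)+L]$, hence
\[
b-a \;\le\; (M(\xi_0)-m(\xi_0)) + 2L \;\le\; 3L,
\]
so $\xi_1\in\Gamma_{S,3L}$ with probability $>1-\varepsilon$. Since the only consumer of this lemma is Proposition~\ref{prop:gamma_sl}, where one is free to enlarge $L$, replacing $\Gamma_{S,L}$ by $\Gamma_{S,3L}$ in the statement is completely harmless. Drop the near-interface floods and the ``delicate bookkeeping''; there is nothing to balance.
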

\begin{proof}
Given $S > 0$ and $x \in \Z$, define the event
$$F_S(x) = \left\{\begin{array}{c}(x,0) \leftrightarrow (y, 1) \text{ for all } y \text{ with } |x-y| \leq S;\\[.2cm] D^{z,y}_{[0,1]} = \varnothing \text{ for all } y,z \text{ with } |x-y|\leq S,\; |x-z| > S \end{array} \right\}.$$
By prescribing the position of a finite number of arrows and the absence of recovery marks and arrows at certain positions, it is easy to show that there exists $\delta_S$ such that, for any $x$,
\begin{equation}\label{eq:fcan_happen}\P[F_S(x)] > \delta_S. \end{equation}
We note that
\begin{align}\label{eq:f_good_prop1}
&F_S(x) \cap \{\xi_0(x) = 1,\;\ell(\xi_0) > x+S\} \subset \{\xi_{1} \equiv 1 \text{ on } [x,x+S],\; \ell(\xi_{1}) > x+S\};\\[.2cm]
\label{eq:f_good_prop2}&F_S(x) \cap \{\xi_0(x) = 1,\;r(\xi_0) < x+S\} \subset \{\xi_{1} \equiv 2 \text{ on } [x-S,x],\; r(\xi_{1}) < x+S\}
\end{align}
and that
\begin{equation}
\label{eq:f_is_indep}
\text{if }x_1, x_2,\ldots \text{ are such that } |x_i - x_j| > 2(R+S) \text{ for all } i \neq j, \text{ then } (F_S(x_i))_{i \geq 1} \text{ are independent}.\end{equation} 

Now, given $\varepsilon > 0$ and $S > 0$, we choose $k \in \N$ so that \begin{equation}\label{eq:choice_of_k_after_delta} k' := \lfloor k/(2R+2S) \rfloor \text{ satisfies } (1-\delta_S)^{k'} < \varepsilon/2\end{equation} (note in particular that $k \geq 2R + 2S$). Assume that $L \geq k$ and $\xi_0 \in \Pi_{k,L}$. By the definition of $\Pi_{k,L}$ in \eqref{eq:def_Pi}, we can find
\begin{align}\label{eq:we_can_find_x} x_1,\ldots, x_{k'} \in [m(\xi_0) - L, m(\xi_0) - 2(S+R)] \text{ with } x_{i+1} > x_i + 2(S+R) \text{ for all }i;\\[.2cm]
y_1,\ldots, y_{k'} \in [M(\xi_0) + 2(S+R), M(\xi_0) +L] \text{ with } y_{i+1} > y_i + 2(S+R) \text{ for all }i\label{eq:we_can_find_y}\end{align}
We then have
\begin{align*}\P\left[\xi_1 \in \Gamma_{S,L}\right] &\stackrel{\eqref{eq:def_gamma},\eqref{eq:f_good_prop1},\eqref{eq:f_good_prop2}}{\geq} \P\left[\left(\cup_{i=1}^k F_S(x_i)\right) \cap \left(\cup_{i=1}^k F_S(y_i)\right)\right] \\[.2cm]&\stackrel{\eqref{eq:fcan_happen},\eqref{eq:f_is_indep},\eqref{eq:we_can_find_x},\eqref{eq:we_can_find_y}}{\geq} 1-2(1-\delta_S)^k > 1 -\varepsilon.\end{align*}
\end{proof}

\begin{proofof}{\textit Proposition \ref{prop:gamma_sl}.}
Fix $\varepsilon$ and $S$. We first choose $k$ corresponding to $\varepsilon/2$ and $S$  in Lemma \ref{lem:after_one_second}, and then choose $L$ corresponding to $\varepsilon/2$ and $k$  in Lemma \ref{lem:gamma_pi_good}. We then have, for any $t \geq 1$,
$$\P\left[\xi^h_t \in \Gamma_{S,L}\right] \geq \P\left[\xi^h_t \in \Gamma_{S,L}\;|\;\xi^h_{t-1} \in \Pi_{k,L}\right] \cdot \P\left[\xi^h_{t-1} \in \Pi_{k,L} \right]\geq (1-\varepsilon/2)^2 > 1-\varepsilon.$$
It is then easy to show that we can increase $L$ if necessary so that the result also holds for $t \in [0,1)$.
\end{proofof}


\end{document}